\documentclass[11pt,reqno]{amsart}
\headheight=7pt         \topmargin=14pt
\textheight=574pt       \textwidth=445pt
\oddsidemargin=18pt     \evensidemargin=18pt
\usepackage[dvipsnames,usenames]{color}
\usepackage{lipsum}
\usepackage{enumerate}
\usepackage{filecontents}
\usepackage[dvipsnames]{xcolor}
\usepackage{hyperref}
\usepackage{cleveref}
\usepackage{epsfig}
\usepackage{epstopdf}
\newcommand\myshade{85}
\colorlet{mylinkcolor}{NavyBlue}
\colorlet{mycitecolor}{NavyBlue}
\colorlet{myurlcolor}{NavyBlue}

\hypersetup{
  linkcolor  = mylinkcolor!\myshade!black,
  citecolor  = mycitecolor!\myshade!black,
  urlcolor   = myurlcolor!\myshade!black,
  colorlinks = true,
}
\usepackage{xypic}
\usepackage{tabularx,booktabs}
\usepackage{amssymb,amsmath,amsthm}
\usepackage{graphicx,color}
\usepackage{hyperref}
\usepackage{mathrsfs}
\setlength{\parskip}{0.5 em}
\setlength{\parindent}{0 em}

\usepackage{adjustbox}
\usepackage{caption}
\renewcommand{\phi}{\varphi}
\newtheorem{theorem}{Theorem}[section]
\newtheorem{lemma}[theorem]{Lemma}
\newtheorem{cor}[theorem]{Corollary}

\newtheorem{prop}[theorem]{Proposition}

\theoremstyle{definition}
\newtheorem{definition}[theorem]{Definition}
\newtheorem{rmk}[theorem]{Remark}

\renewcommand{\phi}{\varphi}
\newcommand{\Z}{\mathbb{Z}}
\newcommand{\Q}{\mathbb{Q}}
\newcommand{\C}{\mathrm{c}}
\newcommand{\Spin}{\operatorname{Spin}}
\newcommand{\braket}[2]{\langle #1, #2 \rangle}
\newcommand{\into}{\hookrightarrow}
\newcommand{\Sh}{\operatorname{Short}}
\newcommand{\T}{\mathcal{T}}
\newcommand{\Ch}{\operatorname{Char}}
\newcommand{\supp}{\operatorname{supp}}



\author[W. Ballinger]{William Ballinger}
\address {Department of Mathematics, Princeton University, Princeton, NJ 08544}
\email{wballinger@princeton.edu}

\author[Y. Ni]{Yi Ni}
\address {Department of Mathematics, California Institute of Technology, Pasadena, CA 91125}
\email{yini@caltech.edu}

\author[T. Ochse]{Tynan Ochse}
\address {Department of Mathematics, University of Texas, Austin, TX 78712}
\email{tochse@utexas.edu}

\author[F. Vafaee]{Faramarz Vafaee}
\address {Department of Mathematics, Duke University, Durham, NC 27708}
\email{vafaee@math.duke.edu}

\begin{document}
\title{The prism manifold realization problem III}

\date{}

\maketitle

\begin{abstract}
Every prism manifold can be parametrized by a pair of relatively prime integers $p>1$ and $q$. In our earlier papers, we determined a complete list of prism manifolds $P(p, q)$ that can be realized by positive integral surgeries on knots in $S^3$ when $q<0$ or $q>p$; in the present work, we solve the case when $0<q<p$. This completes the solution of the realization problem for prism manifolds.
\end{abstract}

\section{Introduction}\label{sec:Introduction}
Let $P(p,q)$ be an oriented prism manifold with Seifert invariants
\[
(-1; (2,1), (2,1), (p,q)),
\]
where $q$ and $p>1$ are relatively prime integers. In~\cite{Prism2016, Prism2017}, we solved the Dehn surgery realization problem of prism manifolds for $q<0$ and for $q>p$. The theme of the present work is to settle the remaining case $0<q<p$. In~\cite[Tables~1~and~2]{Prism2016}, the authors give a tabulation of prism manifolds that can be obtained by positive integral Dehn surgery on {\it Berge--Kang knots}~\cite{BergeKang}. The tables conjecturally account for all realizable prism manifolds; in particular,~\cite[Table~2]{Prism2016} suggests that for a realizable $P(p,q)$ with $q>0$, we must have $p\le 2q+1$. Indeed, this is the case:
\begin{theorem}\label{thm:Pbound}
If $P(p,q)$ with $q>0$ can be obtained by surgery on a knot $K\subset S^3$, then $p\le2q+1$. If $p=2q+1$, then $K$ is the torus knot $T(2q+1,2)$.
\end{theorem}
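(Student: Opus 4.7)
The plan is to follow the $d$-invariant and changemaker-lattice strategy developed in our earlier papers \cite{Prism2016, Prism2017}, adapted to the new range $0 < q < p$. Since every prism manifold is an $L$-space (being a spherical space form), any knot $K \subset S^3$ admitting an $r$-surgery to $P(p,q)$ must satisfy $|H_1| = 4p$, which forces $r = \pm 4p$; after possibly mirroring $K$ we may assume $r = 4p > 0$, in which case $K$ is an $L$-space knot. By Ozsv\'ath--Szab\'o, the knot Floer homology of an $L$-space knot is determined by its Alexander polynomial: the torsion coefficients $V_i = V_i(K)$ form a non-increasing sequence of non-negative integers with consecutive differences $0$ or $1$, and encode the exponents of $\Delta_K(t)$.

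The next step is to compare $d(S^3_{4p}(K), \cdot)$, which is computable from the $V_i$ via the Ozsv\'ath--Szab\'o large-surgery formula, with the $d$-invariants of $P(p,q)$ extracted from a plumbing description of $-P(p,q)$ as in our earlier work. The matching on every $\Spin^c$ structure produces a rigid combinatorial system on the $V_i$, which in the spirit of Greene's reformulation is equivalent to an embedding of a changemaker vector (built from $\Delta_K$) into the negative-definite plumbing lattice of $-P(p,q)$, subject to the requirement that this vector be characteristic and short. The shape of the plumbing read off from the Seifert invariants $(-1;(2,1),(2,1),(p,q))$ with $0 < q < p$ differs from the previously treated regimes and governs the subsequent combinatorics.

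The principal step, and the main obstacle, will be the lattice-theoretic analysis of such embeddings in this range. I would exploit the two short vectors produced by the $(2,1)$ arms, together with the arm associated to the $(p,q)$ fiber, to constrain the pairings of the changemaker standard basis against the plumbing basis. A careful case analysis is expected to show that whenever $p > 2q + 1$ every candidate embedding violates either the changemaker condition or the shortness/characteristic condition (hence cannot arise from an actual $L$-space knot), yielding the bound $p \le 2q + 1$.

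Finally, in the equality case $p = 2q+1$ the embedding is essentially rigid and pins the Alexander polynomial of $K$ to that of the torus knot $T(2q+1, 2)$. The surgery slope $4(2q+1)$ agrees with Moser's $S^3_{4(2q+1)}(T(2q+1, 2)) \cong P(2q+1, q)$. Since the Alexander polynomial of an $L$-space knot determines its entire knot Floer package, a rigidity/characterization argument for $T(2q+1, 2)$ analogous to those developed in \cite{Prism2016, Prism2017} then identifies $K = T(2q+1, 2)$, completing the proof.
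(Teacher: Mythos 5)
Your proposed approach differs substantially from the paper's, and the route you sketch hits exactly the obstacle the paper identifies and works around. Greene-style changemaker embedding (and the corresponding $d$-invariant matching against a plumbing) requires $P(p,q)$ to bound a \emph{sharp} negative-definite four-manifold, and the paper explicitly records that this fails for all $0<q<p$; the replacement machinery (sharp cobordisms from $P(2,1)$, Sections 3--5) is constructed only under the hypothesis $q<p<2q$, which is precisely what Theorem~\ref{thm:Pbound} is needed to justify. So using the changemaker lattice framework here is either circular or requires a sharp four-manifold you do not have. There is also a numerical slip throughout: $|H_1(P(p,q))|=4q$, not $4p$, so the surgery slope is $4q$ (and Moser gives $S^3_{4q}(T(2q+1,2))\cong P(2q+1,q)$, not $S^3_{4(2q+1)}(\cdot)$). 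Finally, in the equality case, no amount of Alexander-polynomial or $\widehat{HFK}$ rigidity identifies $K$ itself; the paper quotes a genuine knot-characterization theorem (Ni--Zhang, Theorem~1.6) rather than arguing from the changemaker rigidity.

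The paper's actual proof of the inequality is an entirely different, more elementary argument using intrinsic invariants of $S^3_{4q}(K)$, so no sharp filling is needed. Two ingredients: (i) Lescop's Casson--Walker surgery formula combined with the Dedekind-sum expression for $\lambda(P(p,q))$ yields the congruence $p\equiv -1\pmod 4$ when $q$ is odd (Lemma~\ref{lem:mod4}); and (ii) because $P(p,q)$ contains a Klein bottle, the Ni--Wu inequality $|d(S^3_{4q}(K),j)-d(S^3_{4q}(K),j+2q)|\le 1$ applies, and combined with conjugation symmetry and the large-surgery formula this gives $t_0\le\lfloor(q+1)/2\rfloor$ (Lemma~\ref{lem:CorrEqual}). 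With $p=|\Delta_K(-1)|=1+4t_0$ one gets $p\le 1+4\lfloor(q+1)/2\rfloor$, which is $2q+1$ for $q$ even, and for $q$ odd the bound is $2q+3$ which is then excluded by the mod-$4$ congruence. You would do well to notice that the Klein-bottle $d$-invariant constraint supplies exactly the kind of $\Spin^c$-wise rigidity you were hoping to extract from a plumbing comparison, but without needing any sharp filling.
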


Doig, in~\cite[Conjecture~12]{Doig2}, conjectured that if $P(p,q)$ is realizable, then $p\le 2|q|+1$. The main result of~\cite{Prism2016} settles the conjecture for $q<0$; Theorem~\ref{thm:Pbound} verifies it for $q>0$.

Our second main result, Theorem~\ref{thm:Realization} below, provides the solution of the realization problem for those $P(p,q)$ with $q<p<2q$.
\begin{theorem}\label{thm:Realization}
The prism manifold $P(p,q)$ with $q<p<2q$ can be obtained by $4q$--surgery on a knot $K\subset S^3$ if and only if $q=\frac{1}{r^2-2r-1}(r^2p-1)$, with $r\le -3$ odd and $p\equiv -2r+5 \pmod{r^2-2r-1}$. Moreover, in this case, there exists a Berge--Kang knot $K_0$ such that $P(p,q)\cong S^3_{4q}(K_0)$, and that $K$ and $K_0$ have isomorphic knot Floer homology groups.
\end{theorem}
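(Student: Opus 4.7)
My plan is to follow the d-invariant strategy developed in our earlier papers \cite{Prism2016, Prism2017}, adapted to the middle range $q<p<2q$. Because $P(p,q)$ is a spherical space form, it is a Heegaard Floer L-space, so any knot $K\subset S^3$ admitting a positive integer surgery to $P(p,q)$ is automatically an L-space knot. Consequently $\widehat{\operatorname{HFK}}(K)$ is determined by the Alexander polynomial $\Delta_K$, and the Ozsv\'ath-Szab\'o torsion coefficients $V_i(K)$ form a weakly decreasing sequence of nonnegative integers with unit jumps that records the exponent gaps of $\Delta_K$.

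The first computation is to determine the correction terms $d(P(p,q),\mathfrak{s})$ at all $4q$ Spin$^c$ structures; this proceeds via the Ozsv\'ath-Szab\'o algorithm applied to the negative-definite plumbing dual to the Seifert presentation $(-1;(2,1),(2,1),(p,q))$, with symmetries (conjugation of Spin$^c$ structures, the hyperelliptic involution of the prism) roughly halving the independent data. On the knot side, the surgery formula expresses $d(S^3_{4q}(K),\mathfrak{s}_i)$ in terms of $d(L(4q,1),\mathfrak{s}_i)$ and the $V_i$; for an L-space knot this specializes to $d(S^3_{4q}(K),\mathfrak{s}_i)=d(L(4q,1),\mathfrak{s}_i)-2\max\{V_i,V_{4q-i}\}$. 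Matching the two lists of correction terms, after fixing a consistent identification of Spin$^c$ structures compatible with orientations, pins each $V_i$ down as an explicit function of $i$; the L-space knot hypothesis then forces the resulting jump pattern to define a bona fide Alexander polynomial.

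The heart of the argument is the combinatorial problem that emerges: the jumps of $\{V_i\}$ must align according to a template controlled by $p$ and $q$. I expect this to reduce, as in \cite{Prism2016, Prism2017}, to a continued-fraction-like recursion whose solutions in the window $q<p<2q$ are parametrized by a single integer $r$. Tracking positivity, parity, and the strict inequalities should force $r$ to be odd with $r\le -3$, and the recursion produces the explicit formula $q(r^2-2r-1)=r^2p-1$ together with the congruence $p\equiv -2r+5 \pmod{r^2-2r-1}$. This arithmetic classification of admissible $(r,p,q)$ is where I expect the main obstacle to lie.

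For each admissible triple, the final step is to exhibit a Berge-Kang knot $K_0$ from (or closely related to) the tabulation in \cite[Table 2]{Prism2016} whose $4q$-surgery is $P(p,q)$, and verify that its Alexander polynomial---hence, being an L-space knot, its complete knot Floer homology---matches the data extracted above. This simultaneously produces the desired $K_0$ and establishes $\widehat{\operatorname{HFK}}(K)\cong \widehat{\operatorname{HFK}}(K_0)$.
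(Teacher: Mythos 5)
Your proposal takes the ``direct $d$-invariant comparison'' route: compute $d(P(p,q),\cdot)$ from a plumbing, compare with the surgery formula, extract the torsion coefficients $V_i$, and hope the L-space constraint on the jump pattern solves for $(p,q)$. This is genuinely not the approach of the paper, nor of \cite{Prism2016,Prism2017}, which you cite as precedent: those works used the Donaldson/changemaker lattice method (Greene-style), not a direct $V_i$-matching recursion. The matching problem you describe is exactly what Greene introduced the changemaker apparatus to circumvent; without a lattice-theoretic reformulation, the combinatorics of ``jump templates'' indexed by a Spin$^c$ identification (itself only determined up to a nontrivial affine bijection) has no known tractable solution, and you correctly flag that step as the main obstacle. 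As written, that gap is the whole theorem.

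There is a second, more structural gap. The reason $q<p<2q$ is harder than the other ranges is that $P(p,q)$ does not bound a negative-definite \emph{sharp} 4-manifold in this window, so the standard Greene embedding (Theorem~\ref{changemakerlatticeembedding}) is simply unavailable. The paper's new ingredient is to replace the sharp filling by a sharp \emph{cobordism} $W\colon P(2,1)\to P(p,q)$ (Proposition~\ref{prop:SharpCobordism}), cap off with a rational homology ball $Z_2$ bounded by $P(2,1)$, and analyze the closed manifold $\widehat X = Z_2\cup W\cup(-W_{4q})$. Donaldson's theorem forces $H_2(\widehat X)\cong\Z^{n+2}$, and the subquotient structure forces $H_2(W\cup(-W_{4q}))\cong D_4\oplus\Z^{n-2}$ (Proposition~\ref{D4}); this yields the \emph{strengthened} changemaker condition of Theorem~\ref{thm:restriction}/Proposition~\ref{changemaker}, in which the Spin$^c$ structures with extremal $V_i$ are constrained by short characteristic covectors of $D_4\oplus\Z^{n-2}$. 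The heavy lifting in Sections~\ref{sec:dbounding}--\ref{sec:d=0} is lattice combinatorics under this constraint, and only then does the continued-fraction computation of Section~\ref{sec:pandq} produce the parametrization by odd $r\le -3$. Your proposal omits all of this and would need the sharp-cobordism and $D_4$ machinery (or an equivalent) to get off the ground.

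The final part of your sketch, verifying realizability against \cite[Table~2]{Prism2016} and using Proposition~\ref{prop:AlexanderComputation} together with $\widehat{\operatorname{HFK}}$ being determined by $\Delta_K$ for L-space knots, is essentially what the paper does and is fine.
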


\begin{rmk}
If we allow $r=-1$ in Theorem~\ref{thm:Realization}, we get $p=2q+1$: see Theorem~\ref{thm:Pbound}.
\end{rmk}

\subsection{The spherical manifold realization problem} The spherical manifold realization problem asks which spherical manifolds arise from positive integral surgery along a knot in $S^3$. Theorems~\ref{thm:Pbound} and~\ref{thm:Realization} and our earlier results~\cite{Prism2016,Prism2017}, combined with Gu's work~\cite{Gu2014} and Greene's work~\cite{greene:LSRP}, provide a complete classification of realizable spherical manifolds. The interest is in finding a complete classification of knots in $S^3$ on which Dehn surgery produce spherical manifolds. In~\cite{Berge}, Berge proposed
a complete list of knots in $S^3$ with lens space surgeries. Indeed, Berge's conjecture states that the {\it P/P knots} form a complete list of knots in $S^3$ that admit lens space surgeries. All the known examples of knots on which surgeries will result in non-lens space spherical manifolds are {\it P/SF knots}. We repeat the following conjecture from~\cite[Conjecture~1.7]{Prism2016}: it is a generalization of Berge's conjecture.

{\conj Let $K$ be a knot in $S^3$ that admits an integral surgery to a spherical manifold. Then $K$ is either a $P/SF$ or a $P/P$ knot.}

\subsection{Methodology}\label{methodology} We first provide a brief overview of the methodology undertaken to solve the prism manifold realization problem in the cases $q<0$ and $q>p$: the proof in both cases draws inspiration from that of Greene for lens spaces~\cite{greene:LSRP}. We then discuss how (and why) the methodology is modified for the case of the present work.

\noindent We first require a combinatorial definition.

\begin{definition}\label{defn:changemaker}
A vector $\sigma=(\sigma_0,\sigma_1,\dots,\sigma_{n+1})\in\mathbb Z^{n+2}$ that satisfies $0\le\sigma_0\le\sigma_1\le\cdots\le\sigma_{n+1}$ is a {\it changemaker vector} if for every $k$, with $0\le k\le\sigma_0+\sigma_1+\cdots+\sigma_{n+1}$, there exists a subset $S\subset\{0,1,\dots,n+1\}$
such that $k=\sum_{i\in S}\sigma_i$.
\end{definition}

The key idea is to use the {\it correction terms} in Heegaard Floer homology in tandem with Donaldson's Theorem~A. The following is immediate from~\cite[Theorem~3.3]{greene:LSRP}.

\begin{theorem}\label{changemakerlatticeembedding}
Suppose that $P(p,q)$ bounds a sharp four-manifold $X(p,q)$. If $P(p, q)$ arises from positive integer surgery on a knot $K$ in $S^3$, then the intersection lattice on $X(p,q)$ embeds as the orthogonal complement $\sigma^\perp$ of some changemaker vector $\sigma \in \Z^{n+2}$, with $n+1=b_2(X)$.
\end{theorem}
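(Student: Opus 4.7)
The plan is to deduce the statement from Donaldson's diagonalization theorem via the cobordism construction of Greene. Let $N>0$ denote the surgery coefficient, and let $W = W_N(K)$ be the $2$-handle trace of the surgery, so that $\partial W = S^3_N(K) = P(p,q)$ and the intersection form of $W$ is the rank-one positive definite form $(N)$. Since $X(p,q)$ is sharp, it is negative definite, so its orientation-reverse $-X(p,q)$ is positive definite with boundary $-P(p,q)$. I would therefore glue
\[
Z = W \cup_{P(p,q)} \bigl(-X(p,q)\bigr)
\]
to form a closed, smooth, positive definite $4$-manifold of second Betti number $n+2$.

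Donaldson's Theorem~A then identifies $(H_2(Z;\Z), Q_Z) \cong (\Z^{n+2}, \mathrm{Id})$. Under Mayer-Vietoris, $H_2(W)$ and $H_2(-X(p,q))$ inject as mutually orthogonal sublattices of full combined rank inside $\Z^{n+2}$. Let $\sigma \in \Z^{n+2}$ be the image of a generator of $H_2(W) \cong \Z$; then $\sigma \cdot \sigma = N$, and the intersection lattice of $X(p,q)$ is contained in $\sigma^\perp$.

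The heart of the argument is to upgrade this containment to equality and to verify that $\sigma$ is a changemaker. Both assertions follow from comparing two computations of the correction terms of $P(p,q)$. On one hand, sharpness of $X(p,q)$ says that for each $\Spin^c$ structure $\mathfrak{t}$, the correction term $d(P(p,q), \mathfrak{t})$ is achieved by a characteristic covector $c$ on $X(p,q)$ restricting to $\mathfrak{t}$ via $\frac{c^2 + \rk(X)}{4}$. On the other hand, computing the same correction terms through the complementary piece $W \subset Z$ via the Ozsváth-Szabó surgery formula expresses them as a maximum over short characteristic covectors on $(\Z^{n+2}, \mathrm{Id})$ lying in fixed residue classes modulo $2\sigma$. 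Matching these two formulas across every $\Spin^c$ structure forces each residue class modulo $2\sigma$ to contain a sufficiently short characteristic covector, which is equivalent to the combinatorial changemaker condition of Definition~\ref{defn:changemaker}; the equality $H_2(X(p,q)) = \sigma^\perp$ then follows by comparing lattice discriminants. The main obstacle is the careful $\Spin^c$-bookkeeping needed to identify the two parametrizations across the two decompositions of $Z$, and this is precisely what Greene carries out in~\cite[Theorem~3.3]{greene:LSRP}, so that the theorem follows without further work.
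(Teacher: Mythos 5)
Your proposal is correct and takes essentially the same approach as the paper, which in fact does not reprove this statement but cites it directly from~\cite[Theorem~3.3]{greene:LSRP}; your reconstruction of the closed-up cobordism, the Donaldson diagonalization, the identification of $H_2(W_N(K))$ with $\Z\sigma$, and the derivation of both the orthogonal-complement identification and the changemaker property from matching the sharp bound on $X(p,q)$ against the surgery formula's $d$-invariants is a faithful sketch of Greene's argument. The only cosmetic caveat is orientation bookkeeping: the paper's convention defines the intersection lattice as $-Q_X$, so the closed manifold is usually taken negative definite with intersection lattice $\Z^{n+2}$; your positive-definite convention is an equivalent reframing and does not affect the argument.
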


See Section~\ref{sect:SharpCob} for the definition of a {\it sharp} four-manifold,
and see Subsection~\ref{notation} for the definition of the {\it intersection lattice}. When $q<0$ or $q>p$, it turns out that $P(p,q)$ bounds a sharp four-manifold $X(p,q)$. We then solved a combinatorial problem: we classified all lattices isomorphic to the intersection lattice of $X(p,q)$, whose complements are changemakers in $\Z^{n+2}$. There is a heavy analysis of lattices involved that forms the main body of~\cite{Prism2016,Prism2017}. Finally, we verified that for every $(p,q)$ corresponding to such a lattice, $P(p,q)$
is indeed realized by surgery on a P/SF knot.

\begin{figure}[t]
\includegraphics[scale=.25]{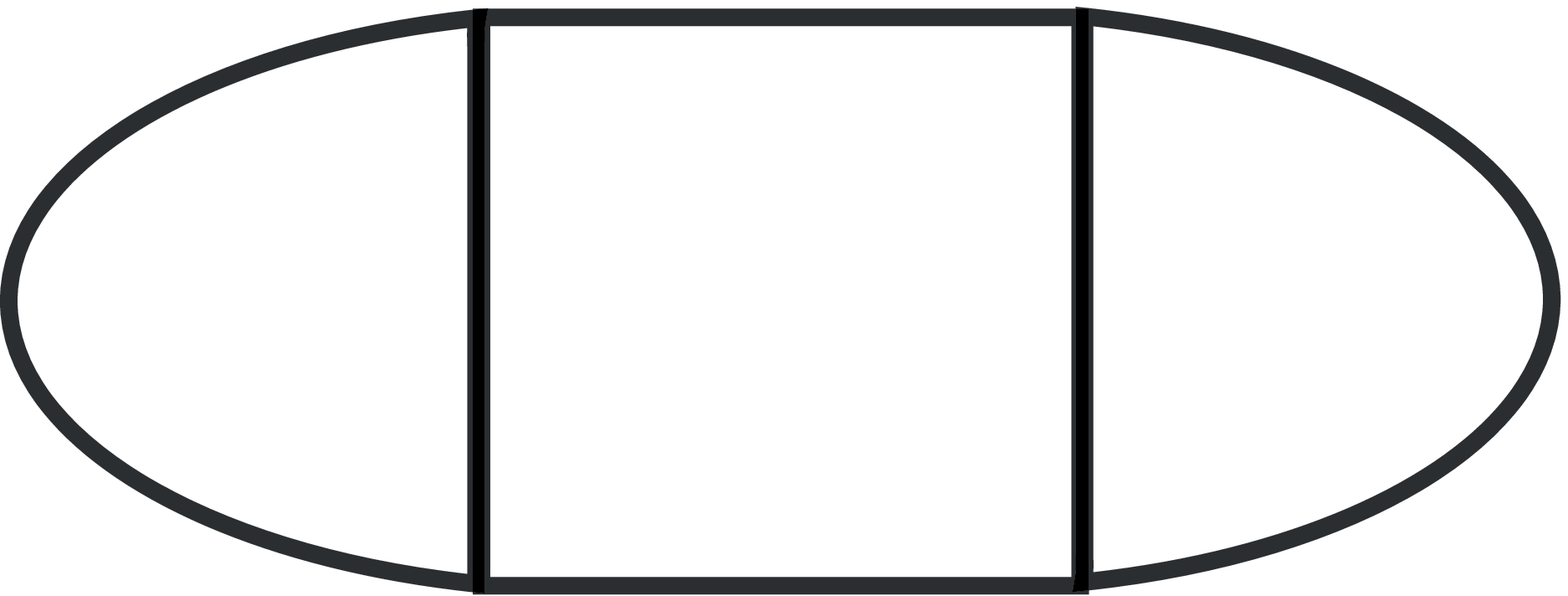}
\put(-72,20){$W$}
\put(-110,55){$P(2,1)$}
\put(-114,20){$Z_2$}
\put(-60,55){$P(p,q)$}
\put(-34,20){$-W_{4q}$}

\caption{Schematic picture of the closed four--manifold $\widehat X= Z_2\cup W\cup -W_{4q}$. We have $X=W\cup_{P(p,q)} -W_{4q}$, $Z=Z_2\cup_{P(2,1)} W$.}
\label{donaldson}
\end{figure}

We now turn our attention to the case $0<q<p$. In light of Theorem~\ref{thm:Pbound}, it suffices to consider $q<p<2q$. When $q<p<2q$, $P(p,q)$ does not bound a sharp four--manifold. Thus, we cannot use the embedding restriction of Theorem~\ref{changemakerlatticeembedding} -- an essential to the classification of realizable prism manifolds in the previous two cases. Our strategy to prove Theorem~\ref{thm:Realization} is to replace Theorem~\ref{changemakerlatticeembedding} with another lattice theoretic obstruction for $P(p,q)$ to being realizable, as follows. The prism manifold $P(2,1)$ bounds a rational homology four-ball $Z_2$ (the left two components of Figure~\ref{fig:Kirby3} where the $0$--framed unknot is replaced by a dotted circle and $a_{-1}=2$); and that there exists a negative definite cobordism $W$ from $P(2,1)$ to $P(p,q)$ (the right $n+1$ components of Figure~\ref{fig:Kirby3}). Suppose that $P(p,q)$ arises from surgery on a knot $K\subset S^3$, and let $W_{4q}=W_{4q}(K)$ be the corresponding two-handle cobordism obtained by attaching a two-handle to the four-ball along the knot $K$ with framing $4q$. Form $Z:=Z_2\cup_{P(2,1)}W$; it will be a smooth four-manifold with boundary $P(p,q)$. The intersection lattice on $Z$ is $\Lambda(q,-p)$, which is defined in~Definition~\ref{def:CType}. Form $X:=W\cup(-W_{4q})$. We prove that the intersection lattice on $X$ is isomorphic to $D_4\oplus \Z^{n-2}$. Finally, form $\widehat X:= Z\cup (-W_{4q})$; see Figure~\ref{donaldson}. It follows that $\widehat X$ is a smooth, closed, simply connected, negative definite four-manifold with $b_2(Z)=n+2$ for some $n\ge0$. Now, Donaldson's Theorem~A~\cite{Donaldson1983} implies that the intersection lattice on $\widehat X$ is the Euclidean integer lattice $\Z^{n+2}$. This provides a necessary condition for $P(p,q)$ to be realizable: the lattice $\Lambda(q,-p)$ embeds as a codimension one sublattice of $\Z^{n+2}$. Our new obstruction now reads as follows:

\begin{theorem}\label{thm:restriction}
Suppose $P(p, q)$ with $q<p<2q$ arises from positive integer surgery on a knot $K$ in $S^3$.
\begin{itemize}
\item[(a)] The linear lattice $\Lambda(q,-p)$ embeds as the orthogonal complement to a changemaker $\sigma\in \Z^{n+2}, n+1=b_2(Z)$.\\
\item[(b)] There is an embedding of $D_4 \oplus \Z^{n-2}$ into $\Z^{n+2}$ such that there exists some short characteristic covector $\chi$ for $D_4 \oplus \Z^{n-2}$ with $\braket{\chi}{\sigma} = i$ if and only if $-2q + g(K) \le i \le 2q-g(K)$.
\end{itemize}
\end{theorem}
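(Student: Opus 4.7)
My proof plan rests on the closed negative definite $4$--manifold $\widehat{X} = Z \cup (-W_{4q})$ assembled in the preceding discussion, together with Donaldson's Theorem~A and the Ozsv\'ath--Szab\'o correction term formula. Since $\widehat{X}$ is smooth, closed, simply connected and negative definite with $b_2(\widehat{X}) = b_2(Z) + 1 = n+2$, Donaldson's theorem allows us to identify $H_2(\widehat{X}; \Z)$ with the standard Euclidean lattice $\Z^{n+2}$ (up to sign). The inclusions $Z \hookrightarrow \widehat{X}$ and $X \hookrightarrow \widehat{X}$ then yield embeddings of the intersection lattices $\Lambda(q,-p)$ (codimension one, since $b_2(Z) = n+1$) and $D_4 \oplus \Z^{n-2}$ (finite index, since $b_2(X) = n+2$) into $\Z^{n+2}$. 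These are the two embeddings asserted in the theorem.

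For part (a), the orthogonal complement of $\Lambda(q,-p)$ in $\Z^{n+2}$ is the rank one sublattice generated by the image $\sigma$ of a generator of $H_2(-W_{4q}; \Z)$, which is primitive because $4q$--surgery is integer surgery. To show that $\sigma$ is a changemaker I follow the template of Greene's argument underlying Theorem~\ref{changemakerlatticeembedding}, as adapted in~\cite{Prism2016, Prism2017}: compare two computations of the Heegaard Floer correction terms $d(P(p,q), \mathfrak{t})$. The first uses the fact that $Z$ is sharp for $P(p,q)$; the second uses Ozsv\'ath--Szab\'o's large surgery expression of $d(P(p,q),\mathfrak{t}_i)$ in terms of the knot Floer $V$--invariants of $K$. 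Equating the two over the full set of $\mathrm{Spin}^c$ structures on $P(p,q)$ yields the combinatorial identity characterising $\sigma$ as a changemaker. The nontrivial input here is sharpness of $Z$, which I would establish by identifying $Z$ rel.\ boundary with the negative-definite linear plumbing whose intersection form is $\Lambda(q,-p)$ and invoking Ozsv\'ath--Szab\'o sharpness for linear plumbings.

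For part (b), I translate the statement into $\mathrm{Spin}^c$ geometry on $\widehat{X}$. Short characteristic covectors of the intersection lattice of $X = W \cup (-W_{4q})$ correspond to $\mathrm{Spin}^c$ structures $\mathfrak{s}_X$ on $X$ that are extremal in the sense of maximising $c_1(\mathfrak{s}_X)^2$ within their restriction class on $\partial X$; any such $\mathfrak{s}_X$ extends canonically across $Z_2$ (a unique extension, since $Z_2$ is a rational homology ball) and across the $4$--ball capping the $S^3$ end, producing a $\mathrm{Spin}^c$ structure $\mathfrak{s}$ on $\widehat{X}$ with $c_1(\mathfrak{s})^2 = -(n+2)$. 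The pairing $\braket{\chi}{\sigma}$ encodes the $\mathrm{Spin}^c$ label $i$ of $\mathfrak{t}_i = \mathfrak{s}|_{P(p,q)}$ through the affine identification $\mathrm{Spin}^c(-W_{4q}) \to \mathrm{Spin}^c(P(p,q))$. The existence of such a short $\chi$ at index $i$ is then equivalent to saturation of the correction term inequality $d(P(p,q), \mathfrak{t}_i) = \tfrac{1}{4}\bigl(c_1(\mathfrak{s})^2 + b_2(\widehat{X})\bigr)$, and by Ozsv\'ath--Szab\'o's surgery formula combined with the Ni--Wu result that $V_j(K) = 0$ if and only if $j \ge g(K)$, this saturation holds precisely for $-2q + g(K) \le i \le 2q - g(K)$.

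The main technical hurdle I foresee is establishing sharpness of $Z$ in part (a), since $Z = Z_2 \cup W$ is built by gluing a rational homology $4$--ball to a plumbing cobordism and is not literally presented as a plumbing; one must check that it is nonetheless diffeomorphic rel.\ boundary to a negative-definite linear plumbing carrying the form $\Lambda(q,-p)$. In part (b), the delicate point is the bookkeeping that identifies the lattice pairing $\braket{\chi}{\sigma}$ with the surgery $\mathrm{Spin}^c$ label $i$ across the decomposition $\widehat{X} = Z_2 \cup W \cup (-W_{4q}) \cup B^4$, which comes down to carefully tracking characteristic covectors and their restrictions to each piece.
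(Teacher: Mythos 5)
Your proposal contains a fatal gap in part~(a): the manifold $Z = Z_2 \cup_{P(2,1)} W$ is \emph{not} simply connected (it has $H_1(Z)\cong\Z/2\Z$), so it cannot be diffeomorphic rel.\ boundary to a linear plumbing, and the Ozsv\'ath--Szab\'o sharpness result for plumbings does not apply. More to the point, the paper explicitly observes in Section~\ref{methodology} that for $q<p<2q$ the prism manifold $P(p,q)$ does \emph{not} bound a sharp four-manifold at all; the sharpness of $Z$ that your argument requires is not merely hard to prove --- it appears to be false, and this is exactly the obstruction that makes this range of $(p,q)$ harder than the cases treated in~\cite{Prism2016,Prism2017}. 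The paper's workaround is to sidestep $Z$ entirely: it introduces the notion of a \emph{sharp cobordism} between rational homology spheres, proves in Proposition~\ref{prop:SharpCobordism} that the cobordism $W:P(2,1)\to P(p,q)$ is sharp, and combines this with the fact (Corollary~\ref{sharp}) that $X = W\cup(-W_{4q})$, viewed as a negative definite manifold with boundary $-P(2,1)$, is sharp in the classical sense. The crucial inequality accounting in Proposition~\ref{protochangemaker} then plays off three bounds --- one from the sharpness of $X$ across $-P(2,1)$, one from the sharp cobordism $W$, and one from the Ozsv\'ath--Szab\'o surgery formula for $d(P(p,q),\cdot)$ --- rather than a single sharpness statement for $Z$.

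There is also a misstep in part~(b): you assert that any short characteristic covector on $X$ ``extends canonically across $Z_2$ (a unique extension, since $Z_2$ is a rational homology ball).'' But $H^2(Z_2)\cong\Z/2\Z$ while $|\Spin^{\mathrm c}(P(2,1))| = 4$, so only two of the four $\Spin^{\mathrm c}$ structures on $P(2,1)$ extend over $Z_2$ (Lemma~\ref{lem:RatBall}); short covectors restricting to the other two $\Spin^{\mathrm c}$ structures do not extend at all. Tracking which classes do and do not extend is exactly the bookkeeping carried out in Section~\ref{sec:cobordism} (Lemma~\ref{lem:ExtendZ0}, Corollary~\ref{cor:SumInv}), and it cannot be bypassed. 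Finally, your proposal treats the identification of the intersection lattice of $X$ with $D_4\oplus\Z^{n-2}$ as given, but this itself requires an argument (Proposition~\ref{D4}): one must show that the intersection form of \emph{any} simply connected positive definite filling of $P(2,1)$ is $D_4\oplus\Z^{n-4}$, which involves Donaldson's theorem plus the lattice-theoretic analysis of index-two sublattices in Lemmas~\ref{evensublattice} and~\ref{dinvDk}.
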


The strategy is now apparent: determine the list of all pairs $(p,q)$ which pass the embedding restriction of Theorem~\ref{thm:restriction}. Finally, we verify that every manifold in our list is indeed realized by a knot surgery: we do so by comparing the list with the list of realizable manifolds tabulated in~\cite[Table~2]{Prism2016}. It must be noted that Part~(a) of Theorem~\ref{thm:restriction} only provides a necessary condition for the prism manifold $P(p,q)$ to be realizable. Indeed, it is easy to find pairs $(p,q)$ that satisfy Part~(a) of Theorem~\ref{thm:restriction}, but the corresponding prism manifolds are not realizable; for example $P(13,9)$ and $P(16,9)$. The $9$--surgery on the torus knot $T(2,5)$ is $L(9,13)\cong L(9,16)$, then work of Greene~\cite{greene:LSRP} shows that the corresponding linear lattice satisfies Part~(a)~of Theorem~\ref{thm:restriction}. However, the manifold $P(16,9)$ is not realizable because of the parity of $16$ ($p$ is always odd for a realizable $P(p,q)$~\cite{Prism2016}); and neither is $P(13,9)$ by Theorem~\ref{thm:Realization}.

In the previous cases $q<0$ and $q>p$ as well as in the lens space realization problem~\cite{greene:LSRP}, the first step was finding a sharp four-manifold bounded by $P(p,q)$ (respectively, the lens space $L(p,q)$): in each case a negative definite four-manifold was found; then it was almost immediate from the previous works of Ozsv\'ath and Szab\'o~\cite{OSzBrDoub, OSzPlumbed} that the four-manifold is sharp. For the case at hand, however, $P(p,q)$ does not bound a sharp four-manifold.
We need to carefully analyze the {\it $\mathrm d$--invariants} of $P(p,q)$ in each Spin$^c$ structure in terms of the $\mathrm d$--invariants of certain Spin$^c$ structures of $P(2,1)$ and the grading shift of the cobordism $W$. In particular, we generalize the notion of sharpness to cobordisms between rational homology spheres, and show that the cobordism $W$ is sharp (Proposition~\ref{prop:SharpCobordism}): again, see Figure~\ref{donaldson}. Using that the intersection lattice on $X$ is isomorphic to $D^4\oplus \Z^{n-2}$, it will be immediate that $X$ is a sharp four--manifold (Corollary~\ref{sharp}). Using this finding, we are able to prove Theorem~\ref{thm:restriction} and translate it into a more practical condition on the changemaker vector~$\sigma$~(Proposition~\ref{prop:T0T1}).

\subsection{Notations}\label{notation} We use homology groups with integer coefficients throughout the paper. For a compact four--manifold $X$, regard $H_2(X)$ as an inner product space equipped with the intersection pairing $Q_X$ on $X$. Also, we refer to $(H_2(X), -Q_X)$ as the {\it intersection lattice} on $X$, where $-Q_X$ denotes the negation of the pairing of $Q_X$. Finally, we call an oriented three--manifold $Y$ a {\it realizable manifold} if it can be obtained by positive integral surgery on a knot in $S^3$ .

\subsection{Organization} This paper is organized as follows. In Section~\ref{sec:p2q}, we prove Theorem~\ref{thm:Pbound}, thus solve the case of the realization problem when $2q<p$. In Section~\ref{sec:latticeinput}, we collect some basic results about linear lattices and changemaker lattices from \cite{greene:LSRP}. In Section~\ref{sec:cobordism}, we study the topology of a certain type of cobordism between rational homology $3$--spheres.
In Section~\ref{sect:SharpCob}, we define sharp cobordisms, and prove that the cobordism $W$ between $P(2,1)$ and $P(p,q)$ is sharp. In Section~\ref{sec:changemaker}, we use the result in Section~\ref{sect:SharpCob} to prove a strengthened changemaker condition in the case $q<p<2q$. In
Section~\ref{sec:dbounding} and Section~\ref{sec:d=0}, we use the strengthened changemaker condition to enumerate all the possible changemaker lattices we can have.
In Section~\ref{sec:pandq}, we determine the pairs $(p,q)$ corresponding to the changemaker lattices, thus finish the proof of Theorem~\ref{thm:Realization}.

\subsection*{Acknowledgements} This project started during Caltech's Summer Undergraduate Research Fellowships (SURF) program in the summer of 2017.
Y.~N. was partially supported by NSF grant numbers DMS-1252992
and DMS-1811900. F.~V. was partially supported by an AMS-Simons Travel Grant.

\section{Proof of Theorem~\ref{thm:Pbound}}\label{sec:p2q}

The goal of this section is to prove the following upper bound of $p$, and then to prove Theorem~\ref{thm:Pbound}. Recall that we assume $q>0$. 

\begin{prop}\label{prop:Pbound}
If $P(p,q)$ is realizable, then $p\le2q+1$.
\end{prop}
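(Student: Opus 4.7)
The plan is to obstruct realizability via Heegaard Floer correction terms. Suppose $P(p,q) \cong S^3_N(K)$ for some knot $K \subset S^3$; comparing the orders of first homology forces $N = 4q$. By Ozsv\'ath--Szab\'o's surgery inequality, for each Spin$^c$ structure one has
\[
d\bigl(P(p,q), \mathfrak{s}\bigr) \;\le\; d\bigl(L(4q, 1), \mathfrak{s}\bigr),
\]
under the natural identification $\mathrm{Spin}^c(S^3_{4q}(K)) \cong \Z/4q\Z$ coming from the trace cobordism. Taking the maximum over $\mathfrak{s}$ eliminates any ambiguity in this identification.

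Independently, the $d$-invariants of $P(p,q)$ can be computed from its Seifert/plumbing description --- a central weight-$(-1)$ vertex attached to two weight-$(-2)$ legs and a chain obtained from the continued-fraction expansion of $p/q$. The plumbing is negative definite but not sharp, so each $d$-invariant is determined only after running Ozsv\'ath--Szab\'o's algorithm for plumbed three-manifolds (or, equivalently, computing the graded root or lattice cohomology of the plumbing). I would single out a carefully chosen Spin$^c$ structure $\mathfrak{s}^*$ --- most naturally the self-conjugate one, or the one extending the canonical class on the plumbing --- and produce a closed-form expression for $d(P(p,q), \mathfrak{s}^*)$ as a function of $p$ and $q$.

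Combining, one obtains an explicit algebraic inequality in $p$ and $q$ which, upon inspection, reduces to $p \le 2q+1$. The torus knot $T(2q+1, 2)$ shows the bound is sharp: its $4q$-surgery realizes $P(2q+1, q)$, and the Ni--Wu expression for $d(S^3_{4q}(T(2q+1,2)), \mathfrak{s})$ makes every step of the comparison tight when $p = 2q+1$, so the proof strategy can be calibrated against this extremal case.

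The main obstacle is producing $d(P(p,q), \mathfrak{s}^*)$ in a form tight enough to distinguish the exact $2q+1$ threshold. Because the plumbing is not sharp, the naive bound $\max_c (c^2 + r)/4$ over characteristic vectors strictly underestimates the actual $d$-invariant, so a careful choice of $\mathfrak{s}^*$ --- one at which the graded root of the plumbing simplifies --- is essential. The matching of Spin$^c$ structures between the Seifert and surgery descriptions must also be handled with care; working with the maximum over Spin$^c$ structures from the outset sidesteps this issue.
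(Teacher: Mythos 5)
Your proposed strategy has a fundamental gap at the combining step. The Ozsv\'ath--Szab\'o surgery inequality $d(S^3_{4q}(K),i)\le d(L(4q,1),i)$ is, by the exact surgery formula~\eqref{eq:nSurgCorr}, equivalent to $t_{\min\{i,4q-i\}}(K)\ge0$, which is \emph{automatically true} for every knot. So even after you compute $d(P(p,q),\mathfrak{s}^*)$ exactly from the plumbing and impose the surgery inequality, you have imposed no constraint whatsoever on $p$ in terms of $q$: the torsion coefficients are free to be as large as the genus bound $4q\ge 2g(K)-1$ permits, which only yields $p=|\Delta_K(-1)|\le 1+4t_0\le 1+4g(K)\le 8q+5$, nowhere near $2q+1$. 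Maximizing over $\mathfrak{s}$ does not help; the inequality is slack in every Spin$^c$ structure, and the freedom to choose the affine identification $\Z/4q\Z\cong\Spin^\C$ does not interact with the bound you want.

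What the paper uses instead of the bare surgery inequality is a constraint on \emph{differences} of $d$-invariants of $P(p,q)$ that is special to prism manifolds: since $P(p,q)$ contains a Klein bottle, the result of Ni--Wu gives $|d(S^3_{4q}(K),j)-d(S^3_{4q}(K),j+2q)|\le1$. Combined with conjugation symmetry $d(j+2q)=d(2q-j)$ and the surgery formula, this forces $t_{q-i}-t_{q+i}$ to equal $i/2$ or $(i\mp1)/2$ (Lemma~\ref{lem:CorrEqual}), and hence $t_0\le\lfloor(q+1)/2\rfloor$, which gives $p\le 1+4t_0\le 2q+1$ or $2q+3$ depending on the parity of $q$. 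The student's framework has no analogue of this torsion--difference control; computing $d(P(p,q),\mathfrak{s}^*)$ in closed form, however carefully $\mathfrak{s}^*$ is chosen, will not recover it. You are also missing the second ingredient: the Casson--Walker surgery formula (Lemma~\ref{lem:mod4}), which gives $p\equiv-1\pmod 4$ when $q$ is odd and is needed to eliminate the residual case $p=2q+3$. Without both, the proposal cannot reach the exact threshold $2q+1$.
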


\begin{rmk}
If $P(p,q)$ is realizable with $p=2|q|\pm1$, then $K$ must be a torus knot~\cite[Theorem~1.6]{NiZhang}. Recall that for a realizable $P(p,q)$, $p$ is odd~\cite{Prism2016}. In particular, if we restrict attention to hyperbolic knots on which surgeries will result in $P(p,q)$, then $p\le 2|q|-3$.
\end{rmk}


\subsection{The Casson--Walker invariant of $P(p,q)$}

Let
\begin{equation}\label{eq:AlexanderPolynomial}
\Delta_K(T)= \alpha_0 + \sum_{i>0}\alpha_i(T^i+T^{-i})
\end{equation}
be the normalized Alexander polynomial of $K$. If $K$ admits an L-space surgery, then $|\alpha_i|\le1$, $\alpha_{g(K)}=1$, and $+1$ and $-1$ appear alternatingly among the nonzero $\alpha_i$~\cite[Theorem~1.2]{OSzLens}.

Given a real number $x$, let $\{x\}=x-\lfloor x\rfloor$ be the fractional part of $x$. Given a pair of coprime integers $n,m$ with $n>0$, let $\mathbf s(m,n)$ be the Dedekind sum
\[
\mathbf s(m,n)=\sum_{i=1}^{n-1}\left(\left(\frac in\right)\right)\left(\left(\frac{im}n\right)\right),
\]
where
\[
((x))=\left\{
\begin{array}{ll}
\{x\}-\frac12, &\text{if }x\in\mathbb R\setminus\mathbb Z,\\
0, &\text{if }x\in\mathbb Z.
\end{array}
\right.
\]

Let $\lambda(\cdot)$ be the Casson--Walker invariant \cite{Walker}, normalized so that \[\lambda(S^3_1(T(3,2)))=2.\]
By \cite[Proposition~6.1.1]{Lescop}, the Casson--Walker invariant of $P(p,q)$ can be computed by the formula
\begin{equation*}\label{eq:CW}
\lambda(P(p,q))=\frac1{12}\left(-\frac pq(\frac1{p^2}-\frac12)-\frac qp+3+12\mathbf s(q,p)\right).
\end{equation*}
Since the Dedekind sum satisfies the reciprocity law
\[
\mathbf s(q,p)+\mathbf s(p,q)=\frac1{12}(\frac pq+\frac qp+\frac1{pq})-\frac14,
\]
we get
\begin{equation}\label{eq:CWprism}
\lambda(P(p,q))=\frac p{8q}-\mathbf s(p,q).
\end{equation}

On the other hand, the surgery formula for the Casson--Walker invariant \cite[Theorem 2.8]{BL} implies that
\begin{eqnarray}
\lambda(S^3_{4q}(K))&=&-\mathbf s(1,4q)+\frac1{4q}\Delta''_K(1)\nonumber\\
&=&-\frac{(2q-1)(4q-1)}{24q}+\frac1{4q}\Delta''_K(1).\label{eq:CWsurg}
\end{eqnarray}

\begin{lemma}\label{lem:mod4}
For realizable $P(p,q)$ with $q$ odd, $p\equiv-1\pmod4$.
\end{lemma}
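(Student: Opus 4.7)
The natural strategy, given that the surrounding section develops formulas for the Casson--Walker invariant, is to equate the two expressions for $\lambda(P(p,q))$ under the assumption $P(p,q) = S^3_{4q}(K)$ and extract congruence information modulo~$4$. Combining \eqref{eq:CWprism} with \eqref{eq:CWsurg} and multiplying through by $24q$ produces an integer identity of the shape
\[
3p - 24q\,\mathbf{s}(p,q) = -(2q-1)(4q-1) + 6\Delta_K''(1),
\]
from which I hope to read off $p \pmod 4$.

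The plan is then to analyze the three remaining terms modulo~$4$ under the hypothesis that $q$ is odd. The right-hand side is easy: $(2q-1)(4q-1) \equiv -1 \pmod 4$, and $\Delta_K''(1) = 2\sum_{i>0} \alpha_i i^2$ is always even (by symmetry of the Alexander polynomial \eqref{eq:AlexanderPolynomial}), so $6\Delta_K''(1) \equiv 0 \pmod 4$. The right-hand side therefore contributes $1 \pmod 4$. If I can also show that $24q\,\mathbf{s}(p,q) \equiv 0 \pmod 4$ whenever $q$ is odd, then the identity collapses to $3p \equiv 1 \pmod 4$, which gives $p \equiv -1 \pmod 4$ immediately.

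The main obstacle is therefore the mod~$4$ vanishing of $24q\,\mathbf{s}(p,q)$. I would handle this by expanding the Dedekind sum via $((i/q)) = i/q - 1/2$ and $((pi/q)) = (pi \bmod q)/q - 1/2$, and using that $i \mapsto pi \bmod q$ permutes $\{1,\dots,q-1\}$, to obtain an explicit formula of the shape
\[
12q\,\mathbf{s}(p,q) = 2p(q-1)(2q-1) - 12\sum_{i=1}^{q-1} i\left\lfloor \tfrac{pi}{q}\right\rfloor - 3q(q-1).
\]
For $q$ odd each term on the right is even (using that $q-1$ is even in the last summand), so $12q\,\mathbf{s}(p,q)$ is even and hence $24q\,\mathbf{s}(p,q)$ is divisible by~$4$, completing the argument. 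If needed, the same divisibility can alternatively be extracted from the Rademacher--Grosswald mod~$8$ congruences for Dedekind sums, but the direct expansion is elementary and self-contained.
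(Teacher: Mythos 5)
Your argument is correct and follows the same overall strategy as the paper: equate the Seifert--fibered formula \eqref{eq:CWprism} for $\lambda(P(p,q))$ with the surgery formula \eqref{eq:CWsurg}, clear denominators by multiplying by $24q$, and extract a congruence modulo $4$. Where you differ is in how you handle the Dedekind-sum term: the paper replaces $\mathbf{s}(p,q)$ with the ``naive'' sum $\sum_{i=1}^{q-1}\bigl(\tfrac{i}{q}-\tfrac12\bigr)\bigl(\tfrac{pi}{q}-\tfrac12\bigr)$, asserting this is valid modulo $1$, and then reduces; whereas you keep the floor-function contribution explicit, derive $12q\,\mathbf{s}(p,q) = 2p(q-1)(2q-1) - 12\sum i\lfloor pi/q\rfloor - 3q(q-1)$, and read off the needed parity directly. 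Your version is actually the more careful of the two: the paper's ``$\equiv\pmod 1$'' step is not literally correct (the dropped term $-\sum\bigl(\tfrac{i}{q}-\tfrac12\bigr)\lfloor pi/q\rfloor$ need not be an integer), though after multiplying by $24q$ and reducing to mod $4$ the discrepancy vanishes for $p,q$ odd, so the paper's conclusion stands. Your explicit bookkeeping of $\lfloor pi/q\rfloor$ closes that gap cleanly; otherwise the two proofs are the same computation.
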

\begin{proof}
By combining~\eqref{eq:CWprism}~and~\eqref{eq:CWsurg}, we have
\begin{eqnarray*}
&&-\frac{(2q-1)(4q-1)}{24q}+\frac1{4q}\Delta''_K(1) \\
&=&\lambda(P(p,q))\\
&\equiv&\frac p{8q}-\sum_{i=1}^{q-1}(\frac{i}q-\frac12)(\frac{pi}q-\frac12)\pmod1\\
&=&\frac p{8q}-\frac{p(q-1)(2q-1)}{6q}+\frac{p(q-1)}{4}.
\end{eqnarray*}
Multiplying both sides by $24q$, we get
\[
1 - 6 q + 8 q^2 + p (-1 + 6 q - 2 q^2)\equiv 6\Delta''_K(1) \pmod{24q}.
\]
Since $\Delta''_K(1)$ is even and $p,q$ are odd, we get
\[
2q+1+p(2q+1)\equiv0\pmod4.
\]
So $p\equiv-1\pmod4$.
\end{proof}

\subsection{The Spin$^c$ structures}

The $i$-th {\it torsion coefficient} of a knot $K$ is defined to be
\[
t_i(K)= \sum_{j\ge 1}j\alpha_{i+j},
\]
for $i\ge0$, where the $\alpha_i$ are as in~\eqref{eq:AlexanderPolynomial}.
Let
\[\varepsilon_i=t_i-t_{i+1}.\]
When $K$ admits an L-space surgery, it is proved in \cite[Proposition~7.6]{RasThesis} that
\begin{equation*}\label{eq:tMono}
\varepsilon_i\in\{0,1\}.
\end{equation*}
Suppose $4q$--surgery on $K$ is $P(p,q)$, then $4q\ge2g(K)-1$ \cite{OSzRatSurg}. So
\begin{equation}\label{eq:genus2q}
g(K)\le2q.
\end{equation}
Since $a_{g(K)}=1$ and $a_i=0$ when $i>g(K)$, it follows from the definition of $t_i$ that
\begin{equation}\label{eq:ti=0}
t_i=0\quad \text{if and only if }i\ge g(K).
\end{equation}
In particular, by
 (\ref{eq:genus2q}), we get
\begin{equation}\label{eq:t2q}
t_{2q}=0.
\end{equation}

For $i>0$,
\begin{align*}\label{eq:bi}
\alpha_i&=t_{i-1}-2t_i+t_{i+1}\\ &=\varepsilon_{i-1}-\varepsilon_i.
\end{align*}
Since $1=\Delta_K(1)=\alpha_0+2\sum_{i>0}\alpha_i$, we can also get
\[\alpha_0=1-2\sum_{i>0}\alpha_i.\]
Thus
\begin{eqnarray}\label{eq:Delta-1}
\Delta_K(-1)&=\alpha_0+2\sum_{i>0}(-1)^i\alpha_i \nonumber \\ &=1-4\sum_{i\ge0}(-1)^{i}\varepsilon_i.
\end{eqnarray}

Given a knot $K\subset S^3$ and an integer $n>0$,
there is an affine isomorphism \cite{OSzAbGr}
\[
\phi:\mathbb Z/n\mathbb Z\to \Spin^\C(S^3_n(K)).
\]
For simplicity, let $d(S^3_n(K),i)=d(S^3_n(K),\phi(i))$.

From \cite{OSzAbGr}, we have
\begin{equation}\label{eq:pCorr}
d(L(n,1),i)=-\frac14+\frac{(2i-n)^2}{4n}.
\end{equation}
Using \cite[Theorem~1.2]{OSzRatSurg}, we get
\begin{equation}\label{eq:nSurgCorr}
d(S^3_n(K),i)=d(L(n,1),i)-2t_{\min\{i,n-i\}}.
\end{equation}

\begin{lemma}\label{lem:CorrEqual}
Suppose that $P(p,q)$ is obtained by the $4q$--surgery on $K$. Let $i$ be an integer with $0\le i\le q$. If $i$ is even, we have
\[
d(S^3_{4q}(K),q-i)=d(S^3_{4q}(K),q+i),
\]
and
\[
t_{q-i}-t_{q+i}=\frac i2.
\]
If $i$ is odd, we have
\[
d(S^3_{4q}(K),q-i)=d(S^3_{4q}(K),q+i)\pm1,
\]
and
\[
t_{q-i}-t_{q+i}=\frac{i\mp1}2.
\]
\end{lemma}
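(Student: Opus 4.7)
The plan is to combine the surgery formula for $d$--invariants with a structural symmetry of $P(p,q)$.

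First, I would substitute~\eqref{eq:pCorr} into~\eqref{eq:nSurgCorr} with $n=4q$. Since $0\le i\le q$, we have $\min\{q\pm i,\,4q-(q\pm i)\}=q\pm i$, so
\begin{equation*}
d(S^3_{4q}(K),\,q\pm i)\;=\;-\tfrac{1}{4}+\tfrac{(q\mp i)^2}{4q}-2\,t_{q\pm i}.
\end{equation*}
Subtracting the two instances yields the key identity
\begin{equation*}
d(S^3_{4q}(K),\,q-i)-d(S^3_{4q}(K),\,q+i)\;=\;i-2\bigl(t_{q-i}-t_{q+i}\bigr).
\end{equation*}
The right-hand side is an integer of the same parity as $i$. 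Consequently, the two torsion-coefficient identities claimed in the lemma will follow as soon as I show that the left-hand side --- a difference of $d$--invariants of $P(p,q)$ --- lies in $\{-1,0,1\}$: parity will then force the difference to equal $0$ when $i$ is even (giving $t_{q-i}-t_{q+i}=i/2$) and $\pm 1$ when $i$ is odd (giving $t_{q-i}-t_{q+i}=(i\mp 1)/2$).

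Second, I would establish this $\{-1,0,1\}$ bound as a property of $P(p,q)$ alone. The prism manifold $P(p,q)$ admits an orientation-preserving involution swapping the two $(2,1)$ singular fibers; this induces an involution on $\Spin^\C(P(p,q))$ under which $d$--invariants are invariant. Translating this involution through the affine identification $\phi:\Z/4q\Z\to\Spin^\C(P(p,q))$ coming from the $4q$--surgery on $K$, I expect that the involution interchanges the labels $q-i$ and $q+i$ exactly when $i$ is even, while for odd $i$ it interchanges $q-i$ with a Spin$^c$ structure that differs from $q+i$ by a $2$--torsion element of $H^2(P(p,q))$ whose action shifts $d$--invariants by $\pm 1$. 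Standard conjugation symmetry on its own only pairs $q-i$ with $3q+i$, so the geometric involution is genuinely needed to reach $q+i$.

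The main obstacle is pinning down exactly how the geometric involution acts under the surgery labeling $\phi$. I would carry out this comparison by working from the plumbing/Seifert description of $P(p,q)$, on which the involution's action on characteristic covectors is transparent, and then matching that labeling to the one induced by the $4q$--surgery identification, tracking any constant shift between the two. Once this comparison is made, the bound $|d(S^3_{4q}(K),q-i)-d(S^3_{4q}(K),q+i)|\le 1$ follows, and combining with the displayed identity above yields both assertions of the lemma.
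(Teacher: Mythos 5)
Your first half is exactly the paper's computation: substituting~\eqref{eq:pCorr} into~\eqref{eq:nSurgCorr} gives
$d(S^3_{4q}(K),q-i)-d(S^3_{4q}(K),q+i)=i-2(t_{q-i}-t_{q+i})$, and the lemma reduces to showing this integer lies in $\{-1,0,1\}$. The gap is in how you propose to get that bound.

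A $d$-preserving involution induces a fixed affine map on $\Spin^\C(P(p,q))\cong\Z/4q\Z$, say $j\mapsto\pm j+c$; it cannot pair $q-i$ with $q+i$ for even $i$ while pairing $q-i$ with something else for odd $i$, because the map does not see the parity of $i$. So a single geometric involution can give you either always equality or never equality, but not the parity-split behavior the lemma asserts. Moreover, your fallback claim --- that adding a $2$-torsion class to a Spin$^c$ structure shifts the $d$-invariant by at most $1$ --- is false in general (on $L(8,1)$, the two Spin$^c$ structures exchanged by the $2$-torsion translation have $d$-invariants $7/4$ and $-1/4$, a gap of $2$). The paper's argument supplies exactly the missing ingredient: $P(p,q)$ contains a Klein bottle $B$ whose core $c$ has annular complement in $B$ and represents the $2$-torsion of $H_1$, and the theorem of Ni and Wu then gives $|d(Y,\mathfrak s)-d(Y,\mathfrak s+\mathrm{PD}[c])|\le 1$. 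Under $\phi$ this is $|d(q-i)-d(3q-i)|\le1$; ordinary conjugation symmetry $d(\phi(j+2q))=d(\phi(2q-j))$ then identifies $d(3q-i)=d(q+i)$, and combining the two inequalities gives the needed bound. You correctly observed that conjugation alone only relates $q-i$ to $3q+i$; the point you are missing is to first move by $2q$ via the Klein-bottle estimate and then conjugate, rather than to invoke a new involution.
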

\begin{proof}
Since $S^3_{4q}(K)$ is a prism manifold, it contains a Klein Bottle. So the order--$2$ element in $H_1(S^3_{4q}(K))$ is represented by a curve in the Klein Bottle, such that the complement of the curve in the Klein Bottle is an annulus.
By \cite[Theorem~1.1]{NiWu}, for any $j\in \mathbb Z/4q\mathbb Z$, we have
\begin{equation}\label{eq:CorrDiff}
|d(S^3_{4q}(K),j)-d(S^3_{4q}(K),j+2q)|\le1.
\end{equation}
Since the conjugate of $\phi(j+2q)$ is $\phi(2q-j)$, we have
\begin{equation}\label{eq:Conj}
d(S^3_{4q}(K),j+2q)=d(S^3_{4q}(K),2q-j).
\end{equation}
Let $j=q-i$. Using~\eqref{eq:pCorr}~and~\eqref{eq:nSurgCorr}, we get
\begin{eqnarray*}
&&d(S^3_{4q}(K),q-i)-d(S^3_{4q}(K),q+i)\\
&=&-\frac14+\frac{(2q-2i-4q)^2}{16q}-2t_{q-i}-\left(
-\frac14+\frac{(2q+2i-4q)^2}{16q}-2t_{q+i}\right)\\
&=&i-2t_{q-i}+2t_{q+i}\in\mathbb Z.
\end{eqnarray*}
Using (\ref{eq:CorrDiff}) and (\ref{eq:Conj}), we get our conclusion.
\end{proof}


\subsection{The proof of Proposition~\ref{prop:Pbound}}

\begin{proof}[Proof of Proposition~\ref{prop:Pbound}]
By Lemma~\ref{lem:CorrEqual} and (\ref{eq:t2q}),
\[
t_0=t_0-t_{2q}\le\left\lfloor \frac{q+1}2\right\rfloor.
\]
By \cite[Lemma~6.1]{NiZhang}, $p=|\Delta_K(-1)|$. Using (\ref{eq:Delta-1}), we get
\begin{align*}
p&\le1+4\sum_{i\ge0}\varepsilon_i\\ &=1+4t_0\\ &\le1+4\left\lfloor \frac {q+1}2\right\rfloor.
\end{align*}

When $q$ is even, $p\le2q+1$. When $q$ is odd, $p\le2q+3$. By Lemma~\ref{lem:mod4}, $p\ne2q+3$, so we must have $p\le2q+1$.
\end{proof}
\begin{proof}[Proof of Theorem~\ref{thm:Pbound}]
The first statement is Proposition~\ref{prop:Pbound}. The second statement follows from combining~\cite[Theorem~1.6]{NiZhang}~and~\cite[Lemma~2.1]{Prism2016}.
\end{proof}

\section{Input from lattice theory}\label{sec:latticeinput}
This section assembles facts about lattices that will be used in the paper. We mainly follow the treatment of~\cite{GreeneCabling,greene:LSRP, Prism2016, Prism2017}.

Recall that an {\it integral lattice} is a finitely generated free abelian group $L$ endowed with a positive definite symmetric bilinear form $\langle,\rangle:L\times L \to \Z$. Given $v \in L$, let $|v| = \langle v, v \rangle$ be the {\it norm} of $v$.
We can extend $\langle,\rangle$ to a  $\Q$--valued pairing on $L\otimes \Q$; using it we define
\[
L^*=\{x\in L\otimes \Q|\langle x,y\rangle\in \Z, \forall y\in L\}.
\]
The pairing on $L$ descends to a non--degenerate, symmetric bilinear form on the {\it discriminant group} $\overline L=L^*/L$
\begin{align*}
b:\overline L \times \overline L \to \Q/\Z\\
b(\overline x, \overline y)\equiv\langle x,y\rangle \pmod1,
\end{align*}
the {\it linking form}, where $\overline x$ denotes the class of $x\in L$ in $\overline L$. The {\it discriminant} of $L$ is the order of the finite group $\overline L$. Let
\[
\Ch(L)=\{x\in L^*|\langle x,y\rangle\equiv\langle y,y\rangle \pmod2, \forall y\in L\}
\]
denote the set of {\it characteristic covectors} for $L$. The set $C(L)=\Ch(L)/2L$ forms a torsor over the discriminant group $\overline L$. Given $\chi\in C(L)$, define
\begin{equation}\label{eq:d}
d_L([\chi])=\displaystyle \min\left \{\left.\frac{|\chi'|-\text{rk}(L)}{4}\right|\chi'\in [\chi] \right \},
\end{equation}
and call an element $\chi\in \Ch(L)$ {\it short} if its norm is minimal in $[\chi]$. We call the pair $(C(L), d_L)$ the {\it d--invariant} of the lattice $L$; in particular it is an invariant of the stable isomorphism type of the lattice~$L$~\cite[Theorem~4.7]{OSzBrDoub}. We drop $L$ from the notation when the lattice $L$ is understood from the context.

\subsection{Linear lattices}
Given a pair of relatively prime positive integers $p,q$, 
write $\frac{p}{q}$ in a Hirzebruch--Jung continued fraction
\begin{equation}\label{eq:ContFrac}
\displaystyle \frac{p}q = a_{-1} - \frac{1}{a_0 - \displaystyle\frac{1}{\ddots - \displaystyle\frac{1}{a_n}}} = [a_{-1},a_0,\dots,a_n]^-,
\end{equation}
with $a_i\ge 2$ when $i\ge0$ in Equation~\eqref{eq:ContFrac}.

\begin{definition}\label{def:CType}
The {\it linear lattice} $\Lambda(q,-p)$ has a basis
\begin{equation}\label{VertexBasis}
\{x_0,\dots,x_n\},
\end{equation}
and inner product given by
\begin{equation}\label{eq:ai}
\braket{x_i}{x_j} = \begin{cases}
a_i, & i = j \\
-1, & |i-j| = 1\\
0, &|i-j|>1,
\end{cases}
\end{equation}
where the coefficients $a_i$, for $i\in \{0,\cdots,n\}$, are defined by the continued fraction~\eqref{eq:ContFrac}. We call~\eqref{VertexBasis} the \emph{vertex basis} of $\Lambda(q,-p)$.
\end{definition}

\begin{rmk}
The reason that we use $\Lambda(q,-p)$ instead of $\Lambda(q,p)$ is that our convention for lens spaces is different from that of~\cite{greene:LSRP}. In our paper, the lens space $L(q,p)$ is oriented as the $\frac qp$--surgery on the unknot, and $P(p,q)$ is the $\frac qp$--surgery on $\mathbb RP^1\#\mathbb RP^1\subset \mathbb RP^3\#\mathbb RP^3$, so they both bound 4--manifolds with intersection lattice $\Lambda(q,-p)$.
\end{rmk}

An element $\ell \in L$ is {\it reducible} if $\ell = x+y$ for some nonzero $x, y \in L$, with $\langle x, y \rangle \ge 0$, and {\it irreducible} otherwise. An element $\ell \in L$ is {\it breakable} if $\ell = x+y$ with $|x|, |y| \ge 3$ and $\langle x, y \rangle =-1$, and {\it unbreakable} otherwise.
\begin{definition}
In a linear lattice, if $I$ is any subset of $\{x_0,x_1,\dots,x_n\}$ then write $[I] = \sum_{x \in A} x$. An {\it interval} is an element of the form $[I]$ with $I = \{x_a,x_{a+1},\dots,x_b\}$ for $0 \le a \le b \le n$. We say that $a$ is the left endpoint of the interval, and $b$ is the right endpoint of the interval. Say that $[I]$ contains $x_i$ if $I$ does: we often write $x_i\in [I]$ in this case.
\end{definition}

\begin{prop}\label{prop:IntervalsIrreducible}\cite[Proposition~3.3]{greene:LSRP}
If $v \in \Lambda(q,-p)$ is irreducible, $v = \epsilon[I]$ for some $\epsilon = \pm 1$ and $[I]$ an interval.
\end{prop}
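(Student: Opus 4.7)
The plan is to argue that irreducibility of $v = \sum_{i=0}^n c_i x_i$ forces, in order, (i) the nonzero $c_i$ to share a common sign, (ii) the support of $v$ to consist of consecutive indices, and (iii) each nonzero $c_i$ to equal $\pm 1$. In each of the three cases I will exhibit a splitting $v = x + y$ with $x,y$ nonzero and $\langle x, y\rangle \geq 0$, directly contradicting irreducibility.

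For (i), if some coefficients of $v$ are positive and some negative, set $x = \sum_{c_i > 0} c_i x_i$ and $y = \sum_{c_i < 0} c_i x_i$. In the expansion of $\langle x,y\rangle$, diagonal terms contribute nothing and only pairs with $|i-j| = 1$ matter; each such term equals $c_i c_j \cdot (-1) > 0$, so $\langle x,y\rangle \geq 0$. Replacing $v$ by $-v$ if needed, we may then assume $c_i \geq 0$ for all $i$. For (ii), let $a$ and $b$ be the minimum and maximum indices in the support. If some $a < m < b$ satisfies $c_m = 0$, write $v = v_1 + v_2$ with $v_1$ collecting the terms of index $< m$ and $v_2$ those of index $> m$; indices drawn from the two halves differ by at least $2$, so $\langle v_1, v_2\rangle = 0$.

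For (iii), set $u = [I]$ with $I = \{a, a+1, \ldots, b\}$, and $w = v - u = \sum_{i=a}^b (c_i - 1) x_i$, which has nonnegative coefficients. A direct expansion using the tridiagonal Gram matrix gives $\langle u, x_i\rangle = a_i - 2$ for $a < i < b$, while $\langle u, x_a\rangle = a_a - 1$ and $\langle u, x_b\rangle = a_b - 1$ when $a < b$ (and $\langle u, x_a\rangle = a_a$ when $a = b$). The defining condition $a_i \geq 2$ of the Hirzebruch--Jung continued fraction makes each of these nonnegative, so $\langle u, w\rangle \geq 0$. Hence if $w$ were nonzero, $v = u + w$ would be a reduction; so $w = 0$ and $v = u = [I]$, and reintroducing the possible sign flip from step~(i) yields $v = \epsilon[I]$. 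The main obstacle is the inner product computation in step (iii), which crucially depends on the bound $a_i \geq 2$; the other two steps are essentially forced by orthogonality of distant basis vectors.
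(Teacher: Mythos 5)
Your proof is correct. The paper does not reprove this statement; it cites Greene's Proposition~3.3 directly, and your three-step reduction — first forcing all coefficients to share a sign, then forcing the support to be an interval of consecutive indices, and finally using $a_i\ge 2$ to push each coefficient down to $\pm 1$, in each case by exhibiting a splitting with nonnegative pairing — is exactly the standard argument Greene gives.
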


From now on, let $[v]$ be the interval corresponding to $v$
when $v$ is irreducible.

\begin{definition}\label{Def:HighNorm}
A vertex $x_i$ has {\it high weight} if $|x_i| = a_i > 2$.
\end{definition}

\begin{prop}\cite[Corollary~3.5(4)]{greene:LSRP}\label{prop:Unbreakable}
An element $\epsilon[I] \in \Lambda(q,-p)$ with $\epsilon \in \{\pm 1 \}$ is unbreakable if and only if $[I]$ contains at most one element of high weight.
\end{prop}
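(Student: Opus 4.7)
The plan is to base both directions on the explicit formula
\[
|[I]| = \sum_{i=a}^{b} a_i - 2(b-a) = 2 + \sum_{i=a}^{b}(a_i-2),
\]
which shows that each high-weight vertex in $I$ increases $|[I]|$ by $a_i-2\ge 1$. Since the breakability of $\epsilon[I]$ is unchanged under $(x,y)\mapsto(-x,-y)$, we may assume $\epsilon=+1$ throughout.

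The ``if'' direction is direct: assume $I$ contains high-weight vertices $x_k$ and $x_\ell$ with $a\le k<\ell\le b$, pick any $c$ with $k\le c<\ell$, and set $v=[\{x_a,\dots,x_c\}]$, $w=[\{x_{c+1},\dots,x_b\}]$. Then $v+w=[I]$, and all cross terms in $\langle v,w\rangle$ vanish except $\langle x_c,x_{c+1}\rangle=-1$. The norm formula gives $|v|\ge 2+(a_k-2)\ge 3$ and $|w|\ge 2+(a_\ell-2)\ge 3$, so $[I]$ is breakable.

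For the converse, suppose $[I]=v+w$ with $\langle v,w\rangle=-1$ and $|v|,|w|\ge 3$, and I proceed in two steps. \emph{Step~1.} Show that $v$ and $w$ are each $\pm$ an interval; by Proposition~\ref{prop:IntervalsIrreducible} it suffices to prove they are irreducible. The plan is to pick a counterexample minimizing $|v|+|w|$ and derive a contradiction: if $v=\alpha+\beta$ with $\alpha,\beta$ nonzero and $\langle\alpha,\beta\rangle\ge 0$, then $\langle\alpha,w\rangle+\langle\beta,w\rangle=-1$ forces one pairing, say $\langle\alpha,w\rangle$, to be $\le -1$, and regrouping as $(\alpha,\beta+w)$ or $(\alpha+w,\beta)$ should produce a strictly smaller breaking with pairing $-1$ and both norms $\ge 3$, contradicting minimality. \emph{Step~2.} Writing $v=\epsilon_1[J_1]$ and $w=\epsilon_2[J_2]$, compare coefficients in $v+w=[I]$: the case $\epsilon_1=\epsilon_2=-1$ is impossible because $[I]$ has non-negative coefficients; in the mixed-sign case one of $J_1,J_2$ is a sub-interval of the other sharing an endpoint, and then $\langle v,w\rangle=-1$ forces the smaller sub-interval to have norm $2$, contradicting $|v|,|w|\ge 3$. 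Thus $\epsilon_1=\epsilon_2=+1$, and disjointness (forced coefficient-wise) together with $\langle v,w\rangle=-1$ gives $J_1=\{x_a,\dots,x_c\}$, $J_2=\{x_{c+1},\dots,x_b\}$ for some $c$.

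Applying the norm formula to these, $|v|,|w|\ge 3$ translates to $\sum_{i=a}^{c}(a_i-2)\ge 1$ and $\sum_{i=c+1}^{b}(a_i-2)\ge 1$, so each half of $I$ contains a high-weight vertex, and $I$ contains at least two in total. The main obstacle I foresee is Step~1 of the converse: establishing that the summands in a minimal breaking cannot themselves be reducible requires exploiting the chain structure of $\Lambda(q,-p)$ in a fairly precise way, in the same spirit as (and presumably by direct adaptation of) the arguments underlying Proposition~\ref{prop:IntervalsIrreducible}.
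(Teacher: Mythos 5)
The paper cites this statement directly from \cite[Corollary~3.5(4)]{greene:LSRP} and gives no proof of its own, so your write-up stands alone. Your ``if'' direction is correct, and Step~2 of the converse is also sound: coefficient comparison rules out $\epsilon_1=\epsilon_2=-1$, the mixed-sign case gives $[J_2]\prec[J_1]$ with $\langle [J_1],[J_2]\rangle = |[J_2]|-1$, forcing $|w|=2$, and the remaining case splits $I$ at an interior point as you say. The genuine gap is in Step~1, exactly where you flag it. After writing $v=\alpha+\beta$ with $\alpha,\beta\ne0$, $\langle\alpha,\beta\rangle\ge0$, and (WLOG, reindexing $\alpha,\beta$) $\langle\alpha,w\rangle\le-1$, $\langle\beta,w\rangle\ge0$, neither regrouping you propose is a smaller \emph{breaking}: the pair $(\alpha+w,\beta)$ has
\[
\langle\alpha+w,\beta\rangle=\langle\alpha,\beta\rangle+\langle w,\beta\rangle\ge0,
\]
so it is a \emph{reducing} decomposition of $[I]$, not one of pairing $-1$; and $(\alpha,\beta+w)$ has pairing $\langle\alpha,\beta\rangle+\langle\alpha,w\rangle$, which need not equal $-1$. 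Thus a minimality induction on $|v|+|w|$ never gets started.

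The fix needs no induction, but it does need a fact you have not invoked: every interval $[I]$ is itself irreducible (this is another clause of \cite[Corollary~3.5]{greene:LSRP}, not restated in this paper, and it is the converse of Proposition~\ref{prop:IntervalsIrreducible}). Granting it, the reducing decomposition $[I]=\beta+(\alpha+w)$ above, together with $\beta\ne 0$, forces $\alpha+w=0$, i.e.\ $\alpha=-w$. Then $\langle\alpha,\beta\rangle=-\langle w,\beta\rangle\le 0$, so both pairings vanish, whence $\langle v,w\rangle=\langle\alpha,w\rangle+\langle\beta,w\rangle=-|w|\le -3$, contradicting $\langle v,w\rangle=-1$. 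By symmetry $w$ is irreducible as well, and the remainder of your argument then closes the converse.
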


\begin{definition}
For two intervals $[I]$ and $[J]$ with left endpoints $i_0,j_0$ and right endpoints $i_1,j_1$, say that $[I]$ and $[J]$ are {\it distant} if either $i_1 + 1 < j_0$ or $j_1 + 1 < i_0$, that $[I]$ and $[J]$ {\it share a common end} if $i_0 = j_0$ or $i_1 = j_1$, and that $[I]$ and $[J]$ are {\it consecutive} if $i_1 + 1 = j_0$ or $j_1 + 1 = i_0$. Write $[I] \prec [J]$ if $I \subset J$ and $[I]$ and $[J]$ share a common end, and $[I] \dagger [J]$ if they are consecutive. If $[I]$ and $[J]$ are either consecutive or share a common end, say that they {\it abut}. If $I \cap J$ is nonempty and $[I]$ and $[J]$ do not share a common end, write $[I] \pitchfork [J]$.
\end{definition}

\begin{prop}\label{indecomposable}\cite[Corollary~3.5(2)]{greene:LSRP}
The lattice $\Lambda(q,-p)$ is indecomposable; that is, $\Lambda(q,-p)$ is not the direct sum of two nontrivial lattices.
\end{prop}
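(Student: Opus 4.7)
The plan is to use the connectedness of the tridiagonal Gram matrix to force any orthogonal decomposition to be trivial. Suppose for contradiction $\Lambda(q,-p)=L_1\oplus L_2$ with both summands nonzero, and write $x_i=y_i+z_i$ with $y_i\in L_1$, $z_i\in L_2$; orthogonality of the summands gives $\langle y_i,z_i\rangle=0$ for every $i$.

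The critical step will be a lemma asserting that each vertex $x_i$ is irreducible in $\Lambda(q,-p)$. Granting this, the decomposition $x_i=y_i+z_i$ satisfies $\langle y_i,z_i\rangle=0\ge 0$, so irreducibility forces one of $y_i, z_i$ to vanish; hence each $x_i$ lies entirely in $L_1$ or $L_2$. But consecutive vertices satisfy $\langle x_i,x_{i+1}\rangle=-1\neq 0$, so they cannot be split across orthogonal summands; they must lie in the same $L_k$. Inducting along the linear path $x_0,\ldots,x_n$, all vertices lie in a single summand, contradicting the nontriviality of the other.

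To prove the irreducibility lemma, I would first derive the positive-definite identity
\[
|v|^2 \;=\; \sum_{i=0}^n (a_i-2)c_i^2 \;+\; \sum_{i=0}^{n-1}(c_i-c_{i+1})^2 \;+\; c_0^2 + c_n^2
\]
for any $v=\sum c_i x_i\in\Lambda(q,-p)$. This makes positivity manifest, shows $|v|^2\ge 2$ for every $v\neq 0$, and classifies the norm-$2$ vectors as $\pm[I]$ for intervals $[I]$ consisting entirely of low-weight ($a_j=2$) vertices. Given a putative reduction $x_i=u+w$ with $u,w\neq 0$ and $\langle u,w\rangle\ge 0$, the reducibility condition rewrites as $|u|^2\le\langle u,x_i\rangle$, and Cauchy--Schwarz then forces $|u|^2\le a_i$. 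A short case analysis on the coefficient $c_i(u)$ of $x_i$ in $u$, using the norm identity above to bound $|u|^2$ from below, rules out every possibility.

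The main obstacle will be the case $c_i(u)=1$: writing $u=x_i+u'$ with $c_i(u')=0$ and $u'\neq 0$, the reducibility condition becomes $|u'|^2\le c_{i-1}(u')+c_{i+1}(u')$. Ruling this out requires careful analysis of the transition-term structure of the norm identity, showing that any such nonzero $u'$ must ``excurse'' away from and return to zero at both ends of its support, each excursion costing at least $1$ in the norm; this inflates $|u'|^2$ beyond the allowed upper bound. The cases $c_i(u)=0$ and $|c_i(u)|\ge 2$ (or $c_i(u)\le -1$) are handled analogously but more directly by the same identity.
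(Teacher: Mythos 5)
The paper itself gives no proof of this Proposition; it simply cites Corollary~3.5(2) of Greene's lens space realization paper, so there is no in-paper argument to compare against. Your proof is correct and self-contained, and the overall strategy---show each generating vertex $x_i$ is irreducible, conclude each $x_i$ lands wholly in one orthogonal summand, then propagate along the connected path graph via $\braket{x_i}{x_{i+1}}=-1\ne 0$---is the standard route and likely close in spirit to Greene's. Your displayed norm identity is correct (one checks $\sum_i(a_i-2)c_i^2+\sum_{i}(c_i-c_{i+1})^2+c_0^2+c_n^2 = \sum_i a_ic_i^2-2\sum_ic_ic_{i+1}$), and it does make positivity under $a_i\ge 2$ transparent and classify the norm-$2$ vectors. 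One simplification worth noting: padding with $c_{-1}=c_{n+1}=0$ and using $k^2\ge\abs{k}$ for integers together with telescoping gives, for any $v$ with $c_i=c_i(v)$, the bound $\braket{v}{v}\ge (a_i-2)c_i^2+2\abs{c_i}$. This dispatches all cases at once, including the one you flag as the ``main obstacle'': for $c_i(u)=1$ it gives $\braket{u}{u}\ge a_i$, while $\braket{u}{u}+\braket{w}{w}\le a_i$ and $\braket{w}{w}\ge 2$ (as $w\ne 0$) force $\braket{u}{u}\le a_i-2$, a contradiction. (Using $\braket{u}{u}+\braket{w}{w}\le a_i$ directly is also a bit sharper than the Cauchy--Schwarz bound $\braket{u}{u}\le a_i$ you invoke.) Your alternative route through $u'=u-x_i$ and the inequality $\braket{u'}{u'}\le c_{i-1}(u')+c_{i+1}(u')$ also closes out correctly---forcing all transition terms to vanish makes $u'=0$---it is just longer than necessary. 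In all, the argument is sound.
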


\begin{prop}[Proposition~3.6~of~\cite{greene:LSRP}]\label{pp}
If $\Lambda (q, p) \cong \Lambda(q', p')$, then $q = q'$ and either $p \equiv p'$ or $pp' \equiv 1\text{ (mod $q$)}$.
\end{prop}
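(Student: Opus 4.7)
My plan is to recover the ordered pair $(q,p)$ from the isomorphism class of $\Lambda(q,p)$ in two stages: first the integer $q$, then $p$ modulo $q$ up to the allowed inversion ambiguity.

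For the first stage, I would use the order of the discriminant group $\overline{\Lambda(q,p)} = \Lambda(q,p)^{*}/\Lambda(q,p)$, which is manifestly an isomorphism invariant and equals $|\det M|$, where $M$ is the Gram matrix of the vertex basis. Since $M$ is tridiagonal with diagonal entries the continued-fraction coefficients $(a_i)$ of $q/p = [a_{-1},a_0,\dots,a_n]^-$ and off-diagonal entries $-1$, the standard two-term recursion $D_k = a_k D_{k-1} - D_{k-2}$ identifies $|\det M|$ with the numerator of the continued fraction, namely $q$. Hence $\Lambda(q,p)\cong\Lambda(q',p')$ immediately forces $q=q'$.

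For the second stage, fix an isomorphism $\varphi\colon \Lambda(q,p)\to \Lambda(q,p')$ and let $\{x_i\}$, $\{x'_i\}$ denote the two vertex bases. The goal is to show $\varphi(x_i) = \varepsilon_i\, x'_{\pi(i)}$ with signs $\varepsilon_i\in\{\pm1\}$ and $\pi$ either the identity or the full reversal $i\mapsto n-i$. Irreducibility and unbreakability are purely lattice-theoretic and so are preserved by $\varphi$. By Proposition~\ref{prop:IntervalsIrreducible}, $\varphi(x_i) = \pm[I_i]$ for some interval $I_i$ in the $x'$-basis, and by Proposition~\ref{prop:Unbreakable} each $I_i$ contains at most one high-weight vertex. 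The constraints $|[I_i]| = a_i$, $\langle [I_i],[I_{i+1}]\rangle=-1$, and $\langle [I_i],[I_j]\rangle=0$ for $|i-j|>1$ then pin down each $I_i$ as a single vertex and force consecutive $I_i,I_{i+1}$ to correspond to consecutive vertices of $\{x'_j\}$; indecomposability (Proposition~\ref{indecomposable}) rules out any disconnected configuration. This leaves only $\pi=\mathrm{id}$ or $\pi=$ reversal.

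The conclusion then splits into the two cases: if $\pi=\mathrm{id}$ then $a_i=a'_i$ for all $i$, so $q/p = q/p'$ and $p=p'$; if $\pi$ is the reversal, then $q/p' = [a_n,\dots,a_{-1}]^-$ is the reverse continued fraction of $q/p$, and the classical Hirzebruch–Jung reversal identity — which fixes the numerator and sends the denominator to its inverse modulo the numerator — gives $pp'\equiv 1 \pmod q$.

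The main obstacle is the step forcing each $\varphi(x_i)$ to be a single vertex. This is particularly delicate when $|x_i|=2$, because a norm-$2$ irreducible interval could a priori be either a single norm-$2$ vertex or a two-vertex sum $x'_j + x'_{j+1}$ with $a'_j=a'_{j+1}=2$. Ruling out the latter requires simultaneously exploiting unbreakability (Proposition~\ref{prop:Unbreakable}), the chain-adjacency relations $\langle x_i,x_{i\pm1}\rangle=-1$, and a careful combinatorial analysis of how intervals of low weight can abut.
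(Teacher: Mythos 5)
The paper does not prove this proposition; it simply cites it as Proposition~3.6 of~\cite{greene:LSRP}, so there is no in-paper argument to compare against. Evaluating your attempt on its own merits: Stage~1 (the discriminant group has order equal to the determinant of the Gram matrix, which is the numerator of the continued fraction) is correct and standard.

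Stage~2 contains a genuine error, not merely an unresolved delicacy. You aim to show that any isomorphism $\varphi\colon\Lambda(q,p)\to\Lambda(q,p')$ sends the vertex basis to the vertex basis up to sign and global reversal. This is false. Take the lattice with vertex basis $x_0,x_1$ and weights $(2,2)$ (the $A_2$ root lattice, discriminant $3$). The map $\varphi(x_0)=-x_0$, $\varphi(x_1)=x_0+x_1$ is an isometry of this lattice onto itself: one checks $|{-x_0}|=2$, $|x_0+x_1|=2$, and $\braket{-x_0}{x_0+x_1}=-1$. Here $\varphi(x_1)$ is a genuine two-vertex interval, precisely the configuration you flag at the end as the obstacle to be ``ruled out.'' It cannot be ruled out; it occurs. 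More broadly, any maximal chain of weight-$2$ vertices contributes Weyl-group automorphisms that scramble the vertex basis. The proposition is nevertheless true — in this example both weight sequences are $(2,2)$, so the conclusion is vacuously satisfied — but the proof must establish that the weight sequence (up to reversal) is an invariant of the abstract lattice, without presupposing that every isomorphism respects the vertex basis. Your argument proves a statement that is strictly stronger than what you need and, as the example shows, stronger than what is true. A correct route (and, I believe, the route Greene takes) analyzes the chain of intervals $\{[I_i]\}$ as a whole, exploiting that they span, abut only in the pattern of a path, and each contain at most one high-weight vertex, to deduce that the induced weight data coincide with that of $\Lambda'$ up to reversal — without ever claiming each $[I_i]$ is a single vertex.
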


\subsection{Changemaker lattices}
When a lattice $L$ is isomorphic to $\sigma^\perp$, the orthogonal complement of a changemaker vector $\sigma\in \Z^{n+2}$, $L$ is called a {\it changemaker lattice}.
\begin{definition}\label{stbasis}
The {\it standard basis} of $\sigma^\perp$ is the collection $S = \{v_1, \dots, v_{n+1}\}$, where
\begin{equation*}
    v_j = \left(2e_0 + \sum_{i = 1}^{j - 1} e_i\right) - e_j
\end{equation*}
whenever $\sigma_j = 1 + \sigma_0 + \cdots + \sigma_{j-1}$, and
\begin{equation*}
    v_j = \left(\sum_{i \in A} e_i\right) - e_j
\end{equation*}
whenever $\sigma_j = \sum_{i \in A} \sigma_i$, with $A \subset \{0, \dots, j-1\}$ chosen to maximize the quantity $\sum_{i \in A} 2^i$.  A vector $v_j \in S$ is called {\it tight} in the first case, {\it just right} in the second case as long as $i < j-1$ and $i \in A$ implies that $i+1\in A$, and {\it gappy} if there is some index $i$ with $i \in A$, $i < j-1$, and $i+1 \not \in A$. Such an index, $i$, is a {\it gappy~index} for $v_j$.
\end{definition}

\begin{definition}
For $v \in \Z^{n+2}$, $\supp v = \{i | \braket{e_i}{v} \neq 0\}$, $\supp^+ v = \{i | \braket{e_i}{v} > 0\}$, and $\supp^- v = \{i | \braket{e_i}{v} > 0\}$.
\end{definition}

\begin{lemma}[Lemma~3.12~(3) in \cite{greene:LSRP}] \label{gappy3}
If $|v_{k+1}|=2$, then $k$ is not a gappy index for any $v_j$ with $j \in \{1, \cdots, n+1 \}$.
\end{lemma}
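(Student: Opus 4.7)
The plan is to show that the hypothesis $|v_{k+1}|=2$ pins down $v_{k+1}$ so tightly that it forces $\sigma_k=\sigma_{k+1}$, and then to derive a contradiction from any alleged gappy occurrence of $k$ by swapping $k$ for $k+1$ in the relevant subset.

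First I would rule out the tight case for $v_{k+1}$. If $v_{k+1}=2e_0+\sum_{i=1}^k e_i - e_{k+1}$, then $|v_{k+1}|=4+k+1=k+5\ge 5$, inconsistent with $|v_{k+1}|=2$. So $v_{k+1}$ falls under the second clause of Definition~\ref{stbasis}: $v_{k+1}=\sum_{i\in A}e_i - e_{k+1}$ with $A\subset\{0,\ldots,k\}$ satisfying $\sum_{i\in A}\sigma_i=\sigma_{k+1}$ and chosen to maximize $\sum_{i\in A}2^i$. From $|v_{k+1}|=|A|+1=2$ we get $|A|=1$; write $A=\{i_0\}$, so $\sigma_{i_0}=\sigma_{k+1}$. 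Since $\sigma$ is nondecreasing and $i_0\le k$, we have $\sigma_k\le \sigma_{k+1}=\sigma_{i_0}\le \sigma_k$, hence $\sigma_k=\sigma_{k+1}$. But then $\{k\}$ is an equally valid singleton expressing $\sigma_{k+1}$, and the maximization of $\sum_{i\in A}2^i$ forces $i_0=k$. Thus $v_{k+1}=e_k-e_{k+1}$, and, crucially, $\sigma_k=\sigma_{k+1}$.

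Next I would argue by contradiction. Suppose $k$ is a gappy index for some $v_j$ with $j\in\{1,\ldots,n+1\}$. By definition this means $v_j=\sum_{i\in A_j}e_i-e_j$ with $A_j$ maximizing $\sum_{i\in A_j}2^i$ under $\sum_{i\in A_j}\sigma_i=\sigma_j$, and with $k\in A_j$, $k<j-1$, $k+1\notin A_j$. The strict inequality $k<j-1$ ensures that $k+1\in\{0,\ldots,j-1\}$, so the subset $A_j':=(A_j\setminus\{k\})\cup\{k+1\}$ is a legitimate competitor. Because $\sigma_k=\sigma_{k+1}$, we still have $\sum_{i\in A_j'}\sigma_i=\sigma_j$, yet $\sum_{i\in A_j'}2^i=\sum_{i\in A_j}2^i+2^{k+1}-2^k>\sum_{i\in A_j}2^i$, contradicting the maximality of $A_j$.

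The heart of the argument is the one-line swap $k\leftrightarrow k+1$ inside $A_j$, so there is no genuine obstacle. The main point requiring care is the bookkeeping between the two clauses of Definition~\ref{stbasis} and the use of the maximality of $\sum 2^i$ twice --- once to pin down $v_{k+1}=e_k-e_{k+1}$ (and hence extract $\sigma_k=\sigma_{k+1}$), and once to rule out the gappy competitor $A_j'$ --- so I would write these two maximality arguments out separately to avoid any appearance of circularity.
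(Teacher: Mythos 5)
Your proof is correct. The paper itself gives no argument for this lemma --- it simply cites Lemma~3.12(3) of Greene's lens space paper --- and your reconstruction is the natural and, to my knowledge, essentially the same argument used there: rule out the tight case by a norm count, deduce $\sigma_k=\sigma_{k+1}$ from $|A|=1$ and monotonicity of $\sigma$, and then refute a gappy occurrence of $k$ in some $v_j$ by swapping $k$ for $k+1$ in $A_j$ and observing that this strictly increases $\sum_{i\in A_j}2^i$ while preserving $\sum_{i\in A_j}\sigma_i=\sigma_j$. One small remark: the step showing that $i_0=k$ (so that $v_{k+1}=e_k-e_{k+1}$ exactly) is a pleasant observation but is not actually needed; the only fact the contradiction uses is $\sigma_k=\sigma_{k+1}$, which you already have before invoking maximality for $v_{k+1}$. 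You also correctly note (implicitly, by appealing to the definition of gappy) that a $v_j$ admitting a gappy index cannot be tight, which is what licenses writing $v_j=\sum_{i\in A_j}e_i-e_j$ and using maximality of $A_j$; keeping that remark explicit, as you did when discussing the two clauses of Definition~\ref{stbasis}, is the right amount of care.
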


\begin{lemma}[Lemma~3.13 in \cite{greene:LSRP}] \label{lem:irred}
Each $v_j \in S$ is irreducible. In fact, suppose $A\subset\{0,1,\dots,j-1\}$, then the vector
\[-e_j+\sum_{i\in A}e_i\]
is irreducible.
\end{lemma}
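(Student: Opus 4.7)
The plan is to reduce the question to a coordinate-by-coordinate analysis inside the ambient lattice $\Z^{n+2}$, and then to use the non-negativity (and, for the relevant indices, positivity) of the changemaker coordinates $\sigma_i$ to rule out any nontrivial decomposition inside $\sigma^\perp$. The ``in fact'' statement is the cleanest case, and it already covers the non-tight standard basis vectors; the tight basis vectors will require a short additional argument because one of their coordinates equals $2$.

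For the ``in fact'' statement, assume for contradiction that $w=-e_j+\sum_{i\in A}e_i$ lies in $\sigma^\perp$ and admits a decomposition $w=x+y$ with $x,y\in\sigma^\perp$ nonzero and $\langle x,y\rangle\ge 0$. Since every entry of $w$ lies in $\{-1,0,1\}$, the integer constraint $x_k+y_k=w_k$ with $x_k,y_k\in\Z$ forces $x_k y_k\le 0$ in each coordinate $k$. Summing over $k$ gives $\langle x,y\rangle\le 0$, which combined with the hypothesis yields $\langle x,y\rangle=0$ and $x_k y_k=0$ for every $k$. So the supports of $x$ and $y$ are disjoint, both vectors have entries in $\{-1,0,1\}$, and together their supports partition $A\cup\{j\}$. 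After swapping if necessary, take $-e_j$ to appear in $y$, so $y=-e_j+\sum_{i\in B}e_i$ and $x=\sum_{i\in A\setminus B}e_i$ for some $B\subseteq A$. Then $x\in\sigma^\perp$ forces $\sum_{i\in A\setminus B}\sigma_i=0$; by positivity of the $\sigma_i$ in the relevant range, $A\setminus B$ must be empty, giving $x=0$, a contradiction.

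Next I would handle the tight standard basis vectors $v_j=-e_j+2e_0+\sum_{i=1}^{j-1}e_i$. The entry $2$ at coordinate $0$ relaxes the bound to $x_0 y_0\le 1$, so the coordinate analysis now yields only $\langle x,y\rangle\le 1$, whence $\langle x,y\rangle\in\{0,1\}$, with any surplus coming from $x_0=y_0=1$. Each case splits further into two subcases depending on which of $x$ or $y$ carries the $-e_j$. In every subcase, the plan is to substitute the tight defining identity $\sigma_j=1+\sigma_0+\cdots+\sigma_{j-1}$ into the equation $\langle x,\sigma\rangle=0$ or $\langle y,\sigma\rangle=0$ and exploit the monotonicity $\sigma_0\le\sigma_1\le\cdots$ to force a numerical contradiction such as $0=1$ or $\sigma_0-1\ge\sigma_0$.

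The main obstacle will be the bookkeeping for the tight case: one must track how the two units at coordinate $0$ and the single $-e_j$ are distributed between $x$ and $y$, then check in each resulting configuration that the tight identity is inconsistent with $x$ and $y$ both lying in $\sigma^\perp$. The coordinate-level rigidity comes for free from the first argument; the real work is combining the tight identity with the monotonicity of $\sigma$ in every branch.
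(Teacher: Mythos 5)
Your approach is correct and is the standard one: the paper does not reproduce a proof (it cites Greene's Lemma~3.13), but the coordinate-by-coordinate argument you sketch is Greene's, and the disjoint-support step is precisely the mechanism the paper records locally as Lemma~\ref{lem:Decomp}. Two points need tightening. First, the ``positivity of the $\sigma_i$'' you invoke is really $\sigma_0\ge 1$, hence $\sigma_i\ge 1$ for every $i$. This is not part of Definition~\ref{defn:changemaker}, but it holds throughout because $\Lambda(q,-p)$ is indecomposable (Proposition~\ref{indecomposable}): if $\sigma_0=0$ then $e_0\in\sigma^\perp$ splits off a $\Z$ summand and a vector such as $e_0-e_1$ becomes reducible, so some such hypothesis is genuinely required.

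Second, in the tight case the coordinate rigidity is \emph{not} ``for free from the first argument.'' The slack $x_0y_0\le 1$ makes $\langle x,y\rangle=0$ compatible with $x_0=y_0=1$ together with a single index $m>j$ at which $x_m=-y_m=\pm1$; for indices $1\le m\le j$ the product $x_m y_m=x_m(w_m-x_m)$ is always even, so $-1$ can only appear above $j$. Your plan, which distributes only the two units at coordinate $0$ and the single $-e_j$ between $x$ and $y$, does not see this branch. It is killed by the monotonicity you cite, since whichever of $x,y$ carries $+e_m$ (and at worst $-e_j$) has $\sigma$-pairing at least $\sigma_0+\sigma_m-\sigma_j>0$ because $\sigma_m\ge\sigma_j$ and $\sigma_0\ge 1$; but it needs to be named explicitly. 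Once this is done, the tight identity $\sigma_j=1+\sigma_0+\cdots+\sigma_{j-1}$ is never actually needed: in every remaining configuration, whichever of $x,y$ omits $-e_j$ is a nonzero, nonnegative integer combination of $e_0,\dots,e_{j-1}$ and therefore has strictly positive pairing with $\sigma$.
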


\begin{lemma}\label{lem:Decomp}
Let $v=\sum_{i\in A} b_ie_i\in L$, with $A\subset\{0,1,\cdots,n+1\}$ and each $b_i\in\{-1,1\}$.
If $v=x+y$ with $\braket xy\ge0$, then there exists a subset $B\subset A$ such that
\[x=\sum_{i\in B} b_ie_i, y=\sum_{i\in A\setminus B} b_ie_i.\]
\end{lemma}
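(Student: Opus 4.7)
The plan is to work coordinate by coordinate in the standard basis $\{e_0,\dots,e_{n+1}\}$ of $\mathbb Z^{n+2}$ and show that the hypothesis $\langle x,y\rangle\ge 0$ forces both $x$ and $y$ to be supported inside $A$ with coordinates compatible with those of $v$. The key observation is purely local: since $v$ has each coefficient equal to $\pm 1$ or $0$, the only integer splittings $x_i+y_i=b_i$ with $x_iy_i\ge 0$ are the trivial ones.

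More precisely, I would write $x=\sum_i x_ie_i$ and $y=\sum_i y_ie_i$ with $x_i,y_i\in\mathbb Z$. For $i\notin A$ we have $x_i+y_i=0$, so $x_iy_i=-x_i^2\le 0$, with equality only when $x_i=y_i=0$. For $i\in A$ we have $x_i+y_i=b_i\in\{\pm 1\}$, so $x_iy_i=x_i(b_i-x_i)$; for any integer $x_i$ this quantity is nonpositive, with equality exactly when $x_i\in\{0,b_i\}$ (and then correspondingly $y_i\in\{b_i,0\}$). Summing gives
\[
\langle x,y\rangle=\sum_i x_iy_i\le 0,
\]
so the assumption $\langle x,y\rangle\ge 0$ forces equality in every coordinate. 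Therefore $x_i=y_i=0$ for $i\notin A$, while for $i\in A$ each $x_i$ is either $0$ or $b_i$. Setting $B=\{i\in A\mid x_i=b_i\}$ yields the desired decomposition.

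Nothing here is really an obstacle; the lemma is essentially a convexity/parity statement about integer splittings of $\pm 1$, and the only mild subtlety is remembering to handle the indices $i\notin A$ (where the constraint $x_i+y_i=0$ also contributes nonpositively to $\langle x,y\rangle$) together with those $i\in A$ on equal footing. Once both cases are combined, the conclusion is immediate.
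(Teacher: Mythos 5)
Your proof is correct and follows essentially the same coordinatewise argument as the paper: both note that $x_i+y_i\in\{-1,0,1\}$ forces $x_iy_i\le 0$ in each coordinate, so $\braket{x}{y}\ge 0$ forces equality throughout, whence each $x_i$ is $0$ or $b_i$. Your version simply spells out the $i\notin A$ and $i\in A$ cases separately where the paper treats them uniformly.
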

\begin{proof}
Let $x=\sum x_ie_i,y=\sum y_ie_i$. Since $x_i+y_i\in\{-1,0,1\}$, $x_iy_i\le0$. If $\braket xy\ge0$, then each $x_iy_i=0$, namely, one of $x_i,y_i$ is $0$. So our conclusion holds.
\end{proof}

\begin{lemma}[Lemma~3.15 in \cite{greene:LSRP}]\label{lem:BrIsTight}
If $v_j \in S$ is breakable, then it is tight.
\end{lemma}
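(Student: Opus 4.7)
The plan is to prove the contrapositive: if $v_j$ is non-tight, then $v_j$ is unbreakable. We may assume $\sigma_0 \ge 1$, hence $\sigma_i \ge 1$ for every $i$, since any leading zeros of $\sigma$ produce orthogonal $\mathbb Z$-summands of $\sigma^\perp$ that are easily handled separately. Under this normalization, a non-tight $v_j$ has the shape $v_j = \sum_{i \in A} e_i - e_j$ with $A \subset \{0,\ldots,j-1\}$ the greedy representation of $\sigma_j$ maximizing $\sum_{i \in A} 2^i$, and in particular every coordinate of $v_j$ lies in $\{-1,0,1\}$.

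Suppose for contradiction that $v_j = x+y$ with $|x|,|y| \ge 3$ and $\braket{x}{y} = -1$. The first substantive step mirrors Lemma~\ref{lem:Decomp} but admits a single negative contribution. Writing $x_i y_i = x_i((v_j)_i - x_i)$ and checking each of the three possible values of $(v_j)_i$, one sees that $x_i y_i$ is a non-positive integer for every integer $x_i$; thus $\sum_i x_i y_i = -1$ forces exactly one coordinate to contribute $-1$ and all the others to contribute $0$. Since neither $t(1-t)=-1$ nor $-t(1+t)=-1$ has an integer solution, the $-1$ cannot come from an index in $A \cup \{j\}$, so it must come from some $i_0 \notin A \cup \{j\}$ with $(x_{i_0},y_{i_0}) = (\pm 1,\mp 1)$. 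The remaining coordinates are pinned down to $x_j \in \{0,-1\}$ and $x_i \in \{0,1\}$ for $i \in A$. After swapping $x$ and $y$ if necessary, we may assume $x_j = -1$, and the alternative $x_{i_0} = -1$ is excluded because it would give $\braket{\sigma}{y} = \sum_{i \in C} \sigma_i + \sigma_{i_0} \ge 3 > 0$. Setting $B = \{i \in A : x_i = 1\}$ and $C = A \setminus B$, we obtain the normal form
\[
x = -e_j + \sum_{i \in B} e_i + e_{i_0}, \qquad y = \sum_{i \in C} e_i - e_{i_0},
\]
with $|B| \ge 1$, $|C| \ge 2$, and $\braket{\sigma}{y} = 0$ forcing $\sigma_{i_0} = \sum_{i \in C} \sigma_i$.

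A short case split on the location of $i_0$ closes the argument. If $i_0 > j$, monotonicity gives $\sigma_{i_0} \ge \sigma_j$, yet $\sigma_j = \sum_{i \in B} \sigma_i + \sigma_{i_0} > \sigma_{i_0}$ since $\sum_{i \in B} \sigma_i \ge |B| \ge 1$; contradiction. If $i_0 < j$, then $A' := B \cup \{i_0\} \subset \{0,\ldots,j-1\}$ is another disjoint representation of $\sigma_j$, so greedy maximality of $A$ gives $2^{i_0} \le \sum_{i \in C} 2^i$; since $i_0 \notin C$, this forces some $c \in C$ with $c > i_0$. Monotonicity then yields $\sigma_c \ge \sigma_{i_0}$, while $\sigma_{i_0} - \sigma_c = \sum_{i \in C \setminus \{c\}} \sigma_i \ge |C \setminus \{c\}| \ge 1$; contradiction again.

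I expect the main obstacle to be the coordinatewise bookkeeping in the middle step --- extracting, from the single equation $\braket{x}{y} = -1$ and the $\{-1,0,1\}$-shape of $v_j$, the normal form for $(x,y)$ together with the pair of constraints $\sigma_j = \sum_{i \in A} \sigma_i$ and $\sigma_{i_0} = \sum_{i \in C} \sigma_i$. Once that normal form is in place, the final case split just plays the greedy maximality of $A$ off against the monotonicity of $\sigma$ and is essentially a one-line calculation in each case.
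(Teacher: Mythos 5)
Your proof is correct. Note that the paper does not actually prove this statement; it simply quotes Lemma~3.15 of \cite{greene:LSRP}, so there is no in-paper argument to compare against. Your argument --- the coordinatewise bookkeeping that forces $x_i y_i \le 0$ for each $i$, isolates the unique $-1$ contribution at some $i_0 \notin A \cup \{j\}$, and produces the stated normal form for $(x,y)$, followed by a case split on whether $i_0 > j$ or $i_0 < j$ that plays the greedy choice of $A$ off against the monotonicity of $\sigma$ --- is sound, and all the inequalities check out. One cosmetic remark: the normalization $\sigma_0 \ge 1$ is automatic in every use of this lemma in the present paper (indecomposability of $\Lambda(q,-p)$ forces $\sigma_0 = 1$, as noted at the start of Section~\ref{sec:dbounding}), so the opening aside about leading zeros can simply be dropped.
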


\begin{lemma}[Lemma~4.2(1) in \cite{greene:LSRP}]\label{tight1}
If $\Lambda(q,-p)$ is a changemaker lattice, then it contains at most one tight vector.
\end{lemma}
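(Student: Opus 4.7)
The plan is to proceed by contradiction. Suppose $v_j, v_k \in S$ are both tight with $j<k$. Writing out the tight forms
\[
v_j = 2e_0 + e_1 + \cdots + e_{j-1} - e_j, \qquad v_k = 2e_0 + e_1 + \cdots + e_{k-1} - e_k,
\]
the orthogonality conditions $\braket{v_j}{\sigma} = \braket{v_k}{\sigma} = 0$ force $\sigma_0 = 1$ together with $\sigma_j = 1 + \sigma_0 + \cdots + \sigma_{j-1}$ and $\sigma_k = 2\sigma_j + \sigma_{j+1} + \cdots + \sigma_{k-1}$. Direct computation then yields $|v_j| = j+4$, $|v_k| = k+4$, and $\braket{v_j}{v_k} = j+2$, and the difference
\[
w := v_k - v_j = 2e_j + e_{j+1} + \cdots + e_{k-1} - e_k
\]
lies in $L$ with $|w| = k-j+4$ and $\braket{v_j}{w} = -2$.

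By Lemma~\ref{lem:irred} and Proposition~\ref{prop:IntervalsIrreducible}, one can write $v_j = \epsilon_j [I_j]$ and $v_k = \epsilon_k[I_k]$ for intervals $I_j, I_k$ in the vertex basis of $\Lambda(q,-p)$ and signs $\epsilon_j, \epsilon_k \in \{\pm 1\}$. Since two intervals that are distant or merely consecutive have inner product in $\{0,-1\}$, the condition $\braket{v_j}{v_k} = j+2 \ge 3$ forces $\epsilon_j = \epsilon_k$ (say both $+1$) together with a nontrivial overlap of $I_j$ and $I_k$. Using the identity $|[I]| = 2 + \sum_{i \in I}(a_i - 2)$ and computing $\braket{[I_j]}{[I_k]}$ in each possible overlap configuration, the possibilities reduce to two: either (a) $I_j$ sits strictly inside $I_k$ with no shared endpoint, or (b) $I_j$ and $I_k$ properly overlap in such a way that $I_j \setminus I_k$ consists entirely of weight-$2$ vertices. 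In both cases the telescoping $w = [I_k] - [I_j]$ expresses $w$ as a signed sum of two distant intervals of $\Lambda(q,-p)$, one on each side of $I_j \cap I_k$.

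The final step is to exploit the rigid $e$-coordinate shape of $w$, whose doubled coefficient at $e_j$ is tight-like after an index shift. Each intermediate standard basis vector $v_{j+1}, \ldots, v_{k-1}$ is also an interval, and one argues that their interval shapes are pinned down by the embeddings of $I_j\cap I_k$ inside $I_j$ and $I_k$. Matching the resulting change-of-basis data against the changemaker relation $\sigma_k = 2\sigma_j + \sigma_{j+1} + \cdots + \sigma_{k-1}$ produces a contradiction: either some weight $a_i$ is forced to be simultaneously $2$ and $\ge 3$, or $\Lambda(q,-p)$ would split as an orthogonal direct sum, contradicting Proposition~\ref{indecomposable}. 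The main obstacle is precisely this last step: the inner product and norm computations are routine, but extracting a contradiction from the simultaneous rigidity of two tight vectors requires a careful combinatorial analysis of how the intermediate intervals can be arranged inside $\Lambda(q,-p)$, and how the two distinct tight shapes force incompatible constraints on the continued fraction coefficients.
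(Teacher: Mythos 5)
The paper does not give a proof of this lemma; it simply cites Greene's Lemma~4.2(1). So there is no internal proof to compare against, but the cited result in \cite{greene:LSRP} does carry the argument through to a contradiction, which is exactly where your write-up stops short.

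Your setup is correct as far as it goes. The forms of the two hypothetical tight vectors, the computations $|v_j|=j+4$, $|v_k|=k+4$, $\braket{v_j}{v_k}=j+2$, the recurrence $\sigma_k = 2\sigma_j + \sigma_{j+1} + \cdots + \sigma_{k-1}$, the difference vector $w = 2e_j + e_{j+1} + \cdots + e_{k-1} - e_k$ with $|w|=k-j+4$ and $\braket{v_j}{w}=-2$, and the reduction via Proposition~\ref{prop:IntervalsIrreducible} to the two overlap configurations (either $[v_j]$ lies strictly inside $[v_k]$, or they properly overlap with $I_j \setminus I_k$ consisting only of weight-$2$ vertices) are all right. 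The observation that $w$ then splits as a sum of two distant intervals is also correct.

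The problem is that you do not actually derive a contradiction from this; you explicitly acknowledge ``the main obstacle is precisely this last step'' and then assert, without an argument, that ``matching the resulting change-of-basis data\ldots produces a contradiction.'' That assertion is the entire content of the lemma. In particular, $w$ being reducible in $L$ (as the sum of two orthogonal pieces) is not on its own a contradiction: plenty of elements of a linear lattice are reducible. You would need to track how $w$ interacts with the intermediate vectors $v_{j+1},\dots,v_{k-1}$, pin down their intervals, and show that the changemaker relation $\sigma_k = 2\sigma_j + \sum_{i=j+1}^{k-1}\sigma_i$ is incompatible with the forced interval geometry. As written, the proposal is a correct framing of the problem followed by a placeholder where the argument should be, so it does not constitute a proof.
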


\begin{lemma}[Lemma~3.12(1) in \cite{greene:LSRP}]\label{lem:j-1}
For any $v_j\in S$, we have $j-1\in\supp(v_j)$.
\end{lemma}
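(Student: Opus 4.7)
The plan is to split into cases according to the two possibilities in Definition~\ref{stbasis}.

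If $v_j$ is tight, then by definition $v_j = 2e_0 + \sum_{i=1}^{j-1} e_i - e_j$. The coefficient of $e_{j-1}$ is $1$ when $j \ge 2$, and when $j = 1$ the coefficient of $e_0 = e_{j-1}$ is $2$. In either case $\braket{e_{j-1}}{v_j} \neq 0$, so $j-1 \in \supp(v_j)$.

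If $v_j$ is not tight, write $v_j = \sum_{i \in A} e_i - e_j$, where $A \subset \{0,\dots,j-1\}$ satisfies $\sum_{i \in A}\sigma_i = \sigma_j$ and is chosen to maximize $\sum_{i \in A} 2^i$. My plan is to exhibit some index set $A' \subset \{0,\dots,j-1\}$ that contains $j-1$ and also sums (via $\sigma$) to $\sigma_j$. Once this is done, the elementary inequality
\[
2^{j-1} > 2^{j-2} + 2^{j-3} + \cdots + 2^0
\]
forces $j-1 \in A$, since every subset of $\{0,\dots,j-1\}$ missing $j-1$ lies in $\{0,\dots,j-2\}$ and hence has strictly smaller $\sum 2^i$ than any subset containing $j-1$; the greedy choice must therefore contain $j-1$.

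To produce $A'$ it will suffice to find $B \subset \{0,\dots,j-2\}$ with $\sum_{i \in B}\sigma_i = \sigma_j - \sigma_{j-1}$, for then $A' = B \cup \{j-1\}$ works. Non-decreasingness of $\sigma$ gives $\sigma_j - \sigma_{j-1} \ge 0$. The changemaker property forces $\sigma_j \le 1 + \sigma_0 + \cdots + \sigma_{j-1}$ (otherwise the integer $1 + \sigma_0 + \cdots + \sigma_{j-1}$ would have no subset-sum representation), and non-tightness upgrades this to $\sigma_j \le \sigma_0 + \cdots + \sigma_{j-1}$, i.e., $\sigma_j - \sigma_{j-1} \le \sigma_0 + \cdots + \sigma_{j-2}$. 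A short induction using these same defining inequalities shows that the truncation $(\sigma_0,\dots,\sigma_{j-2})$ is itself a changemaker vector, so every nonnegative integer $k \le \sigma_0 + \cdots + \sigma_{j-2}$---in particular $k = \sigma_j - \sigma_{j-1}$---admits a subset-sum representation supported in $\{0,\dots,j-2\}$, supplying the desired $B$. I do not anticipate any real obstacle; the only minor subsidiary statement is that prefixes of changemaker vectors are again changemakers, and this is immediate from the characterization by the inequalities $\sigma_i \le 1 + \sigma_0 + \cdots + \sigma_{i-1}$.
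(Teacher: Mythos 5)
Your proof is correct and, as far as I can tell, follows essentially the same line of reasoning as Greene's original argument for Lemma~3.12(1) in \cite{greene:LSRP}: the tight case is immediate, and in the non-tight case one exhibits a valid subset containing $j-1$ by first taking $\sigma_{j-1}$ and then representing the remainder $\sigma_j-\sigma_{j-1}$ in the truncated changemaker, whereupon the binary-maximizing greedy choice of $A$ must include $j-1$. The subsidiary fact you invoke---that the changemaker condition is equivalent to the inequalities $\sigma_i \le 1 + \sigma_0 + \cdots + \sigma_{i-1}$, so that prefixes of changemakers are again changemakers---is the standard characterization used throughout \cite{greene:LSRP}, so no gap there.
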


\begin{definition}
If $T$ is a set of irreducible vectors in a linear lattice $\Lambda(q,-p)$, the {\it intersection graph} $G(T)$ has vertex set $T$, and an edge between $v$ and $w$ if the intervals corresponding to $v$ and $w$ abut. We write $v\sim w$ if $v$ and $w$ are connected in $G(T)$.
\end{definition}

\begin{lemma}\label{lem:PairingNonzero}
If the intervals corresponding to $v$ and $w$ abut, then $\braket vw\ne0$.
\end{lemma}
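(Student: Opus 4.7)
The plan is a direct computation, after using irreducibility to put $v$ and $w$ into interval form. By Proposition~\ref{prop:IntervalsIrreducible}, I can write $v=\epsilon_1[I]$ and $w=\epsilon_2[J]$ with $\epsilon_i\in\{\pm 1\}$, so $\braket{v}{w}=\epsilon_1\epsilon_2\braket{[I]}{[J]}$ and it suffices to prove $\braket{[I]}{[J]}\neq 0$ whenever $[I]$ and $[J]$ abut. I would then split into the two abutment types from the definition: consecutive intervals, and intervals sharing a common endpoint.

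In the consecutive case, take $[I]=x_a+\cdots+x_b$ and $[J]=x_{b+1}+\cdots+x_c$. By~\eqref{eq:ai}, the only pair $(x_i,x_j)$ with $i\in\{a,\dots,b\}$, $j\in\{b+1,\dots,c\}$ and $|i-j|\le 1$ is $(x_b,x_{b+1})$, contributing $\braket{x_b}{x_{b+1}}=-1$; all other pairings vanish. Hence $\braket{[I]}{[J]}=-1\neq 0$.

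In the shared-endpoint case, by symmetry I may assume a common left endpoint, so $[I]=x_c+\cdots+x_m$ and $[J]=x_c+\cdots+x_N$ with $m\le N$. The case $m=N$ is immediate since then $v=\pm w$ and $\braket{v}{w}=\pm|v|^2\neq 0$, so assume $m<N$. A direct calculation from~\eqref{eq:ai} yields $\braket{x_c}{[J]}=a_c-1$ and $\braket{x_i}{[J]}=a_i-2$ for $c<i\le m$ (the key point being that $i\le m<N$, so the right endpoint of $J$ plays no role in these terms). Summing over $i=c,\ldots,m$,
\[
\braket{[I]}{[J]}=\sum_{i=c}^{m}(a_i-2)+1\ge 1,
\]
where the inequality uses $a_i\ge 2$ from Definition~\ref{def:CType}. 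Thus $\braket{[I]}{[J]}>0$ in this case as well.

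The whole argument is essentially bookkeeping; the only mild obstacle is verifying the edge contributions carefully in the shared-endpoint subcase so that the telescoping identity above comes out with the correct $+1$ term, after which positivity is immediate from $a_i\ge 2$.
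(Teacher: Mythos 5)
Your proof is correct, and since the paper states this lemma without proof, a direct verification like yours is exactly what is needed. The case analysis is complete (consecutive, and shared endpoint with or without containment being proper), the cross-term accounting in both cases is right, and the reduction to a common left endpoint is legitimate since the argument only uses adjacency and $a_i\ge 2$, both of which are preserved under reversing the vertex order. One small notational slip: in the $m=N$ subcase you write $\braket{v}{w}=\pm|v|^2$, but in this paper's conventions $|v|$ already denotes $\braket{v}{v}$, so it should read $\braket{v}{w}=\pm|v|\ne 0$; this does not affect the argument.
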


\begin{lemma}[Lemma~4.4 in \cite{greene:LSRP}]\label{lem:Pairing1}
If $v_i$ and $v_j$ are distinct unbreakable vectors with $|v_i|,|v_j|\ge3$, then $|\braket{v_i}{v_j}|\le1$, with equality if and only if $[v_i]\dagger[v_j]$.
\end{lemma}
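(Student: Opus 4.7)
The plan is to perform a case analysis on the relative positions of the intervals $[v_i]$ and $[v_j]$ in the vertex basis of $\Lambda(q,-p)$. Since the standard basis vectors $v_i, v_j \in S$ are irreducible (Lemma~\ref{lem:irred}), Proposition~\ref{prop:IntervalsIrreducible} allows us to write $v_i = \epsilon_i[I_i]$ and $v_j = \epsilon_j[I_j]$ with $\epsilon_i, \epsilon_j \in \{\pm 1\}$. By Proposition~\ref{prop:Unbreakable}, unbreakability implies each interval $[I_\bullet]$ contains at most one high-weight vertex; combined with the hypothesis $|v_\bullet|^2 \ge 3$, a direct norm computation in the vertex basis shows that $[I_\bullet]$ contains \emph{exactly} one high-weight vertex $x_{k_\bullet}$ and $|v_\bullet|^2 = a_{k_\bullet}$.

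The two intervals $[I_i], [I_j]$ must be in one of five relative positions: distant, consecutive ($\dagger$), nested with a shared endpoint ($\prec$), transverse ($\pitchfork$), or equal. The equal case $[I_i] = [I_j]$ would force $v_i = \pm v_j$, which is incompatible with $v_i$ and $v_j$ being \emph{distinct} standard basis vectors since, by Lemma~\ref{lem:j-1}, each $v_\ell$ has $-e_\ell$ as a summand with $\ell$ pairwise distinct across the standard basis. The distant case yields $\braket{v_i}{v_j} = 0$ directly from~\eqref{eq:ai}. The consecutive case $[I_i]\dagger[I_j]$ gives $\braket{[I_i]}{[I_j]} = -1$ from the off-diagonal entry of~\eqref{eq:ai}, producing exactly the equality $|\braket{v_i}{v_j}| = 1$ with $[v_i]\dagger[v_j]$ asserted in the lemma.

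The main obstacle, and the heart of the proof, is eliminating the $\prec$ and $\pitchfork$ configurations, where a pure linear-lattice calculation would produce pairings of size $a_{k_0}-1 \ge 2$. Here one must invoke the changemaker structure of $\sigma^\perp$. Writing the $\mathbb{Z}^{n+2}$-pairing of $v_i = \sum_{\ell \in A_i} c_\ell e_\ell - e_i$ and $v_j = \sum_{\ell \in A_j} c_\ell e_\ell - e_j$ (Definition~\ref{stbasis}) as an explicit expression in $|A_i \cap A_j|$, $\mathbf{1}_{i \in A_j}$, and the $e_0$-adjustment for tight vectors, and equating it with the linear-lattice pairing, forces $A_i$ and $A_j$ to overlap heavily in the $\prec$ and $\pitchfork$ configurations. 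Using the maximality condition defining the standard basis together with Lemmas~\ref{gappy3}, \ref{lem:BrIsTight}, and~\ref{tight1}, one then derives a contradiction in each such configuration, treating the tight and non-tight sub-cases separately as in~\cite[Lemma~4.4]{greene:LSRP}. The $\pitchfork$ case is the most delicate, requiring careful control of the overlap region of the intervals and of the corresponding subsets $A_i, A_j$.

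Once the $\prec$ and $\pitchfork$ configurations are ruled out, only the distant and consecutive cases remain, giving $|\braket{v_i}{v_j}| \le 1$ with equality if and only if $[v_i] \dagger [v_j]$, completing the proof.
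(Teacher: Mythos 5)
The paper does not prove this lemma itself but cites it from Greene; so the only question is whether your sketch is sound, and it has a concrete gap. The case split (distant, $\dagger$, $\prec$, $\pitchfork$, equal) is a reasonable organizing principle, and the distant/$\dagger$/equal cases are dispatched correctly. But your claim that the $\pitchfork$ configuration "would produce pairings of size $a_{k_0}-1\ge 2$" is false. In the $\pitchfork$ case one computes $\braket{[I_i]}{[I_j]}=|[I_i\cap I_j]|-2$, not $a_{k_0}-1$. If the overlap contains no high-weight vertex, $|[I_i\cap I_j]|=2$ and the pairing is $0$, which violates nothing; if it contains one high-weight vertex of weight exactly $3$, the pairing is $1$, which would violate only the ``iff'' clause, not the bound. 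So the task is not to \emph{eliminate} $\pitchfork$, but only to rule out the subcase where the overlap contains a high-weight vertex (and you should say why $\pitchfork$ with pairing $0$, if it occurs, is consistent with the statement). The $\prec$ calculation (pairing $=$ inner norm $-\,1\ge 2$) is correct. Also, the hypothesis is $|v_\bullet|\ge 3$ where $|v|:=\braket{v}{v}$, not $|v_\bullet|^2\ge 3$.

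The larger problem is that the decisive step is not actually carried out: ``one then derives a contradiction in each such configuration, treating the tight and non-tight sub-cases separately as in~\cite[Lemma~4.4]{greene:LSRP}'' is just a pointer to the result you are supposed to be proving. And the framing of ``equating the $\mathbb Z^{n+2}$-pairing with the linear-lattice pairing'' is confused: these are the same bilinear form evaluated on the same vectors, written in two coordinate systems, so ``equating'' them yields nothing. The actual content is that a single vector $v_\ell$ must simultaneously satisfy the combinatorial form of Definition~\ref{stbasis} (so $\braket{v_i}{v_j}=|A_i\cap A_j|-\mathbf 1_{i\in A_j}$ for $i<j$, plus adjustments for a tight vector) and the interval restriction from Proposition~\ref{prop:IntervalsIrreducible}; the proof needs to exploit the tension between $|A_i\cap A_j|\ge 2$ (or $3$) and the constraints of Lemmas~\ref{gappy3}, \ref{lem:BrIsTight}, \ref{tight1}. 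That interplay is where the lemma lives, and your proposal does not touch it.
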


\begin{lemma}[Corollary~4.5 in \cite{greene:LSRP}]\label{lem:DistinctHighNorm}
If $v_i$ and $v_j$ are distinct unbreakable vectors with $|v_i|,|v_j|\ge3$, then the high weight vertices contained in $v_i,v_j$ are different.
\end{lemma}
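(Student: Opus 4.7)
The plan is to argue by contradiction: assume that $v_i$ and $v_j$ share a common high-weight vertex $x_k$, and derive a contradiction with Lemma~\ref{lem:Pairing1}.

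The first step is to use Proposition~\ref{prop:IntervalsIrreducible} to write $v_r = \epsilon_r [I_r]$ with $\epsilon_r \in \{\pm 1\}$ for $r \in \{i, j\}$. A direct computation from~\eqref{eq:ai} shows that for an interval $[I] = \{x_a, \dots, x_b\}$,
\[
|[I]|^2 = \sum_{m=a}^{b} a_m - 2(b-a).
\]
In particular, an interval containing no high-weight vertex has norm $2$. By Proposition~\ref{prop:Unbreakable}, $[I_r]$ contains at most one high-weight vertex, so the hypothesis $|v_r| \ge 3$ forces exactly one: by assumption it is $x_k$, and the same computation gives $|v_r| = a_k$. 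Since $v_i \ne v_j$ are linearly independent, $[I_i] \ne [I_j]$ as unsigned intervals; and since both intervals contain $x_k$, they overlap and are therefore neither distant nor consecutive, so Lemma~\ref{lem:Pairing1} forces $\braket{v_i}{v_j} = 0$.

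I would obtain the contradiction by computing $\braket{[I_i]}{[I_j]}$ directly. Writing $[I_i] = \{x_a, \dots, x_b\}$ and $[I_j] = \{x_c, \dots, x_d\}$ with $a \le c$ (WLOG), let $s$ denote the number of degree-$2$ vertices in the overlap. The diagonal $m=l$ terms over the overlap sum to $a_k + 2s$; adjacent pairs $\{x_m, x_{m+1}\}$ entirely inside the overlap each contribute $-2$ to the off-diagonal part, for a total of $-2s$; and each side of the overlap on which one interval extends beyond the other contributes an additional $-1$ from a single cross-boundary adjacent pair. Hence $|\braket{[I_i]}{[I_j]}|$ equals $a_k - 1$ when the intervals share one common endpoint and $a_k - 2$ otherwise, and in either case is at least $1$ since $a_k \ge 3$. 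Because $\braket{v_i}{v_j} = \pm \braket{[I_i]}{[I_j]}$, this contradicts $\braket{v_i}{v_j} = 0$.

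The main obstacle is purely the combinatorial bookkeeping of the overlap subcases (shared left endpoint, shared right endpoint, strict crossing, or strict containment). The tightest situation is a strict crossing with $a_k = 3$, where $|\braket{v_i}{v_j}| = 1$; even this contradicts Lemma~\ref{lem:Pairing1}, which only allows that value when $[v_i]$ and $[v_j]$ are consecutive, whereas here the intervals overlap.
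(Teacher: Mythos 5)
Your proof is correct, and it follows the natural route: the paper itself cites this result from Greene's Corollary~4.5 without supplying a proof, so there is nothing internal to compare against, but your argument is essentially what one would reconstruct from Greene's setup. The logical skeleton is sound: unbreakability plus $|v_r|\ge 3$ forces each interval to contain exactly one high-weight vertex, equal to $a_k$; the two intervals, being distinct (linear independence of the standard basis rules out $v_i=\pm v_j$) and both containing $x_k$, overlap and so are neither distant nor consecutive; Lemma~\ref{lem:Pairing1} then forces $\braket{v_i}{v_j}=0$; and the direct overlap computation gives $|\braket{[I_i]}{[I_j]}|\in\{a_k-1,a_k-2\}\ge 1$. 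I re-derived the overlap bookkeeping for all four subcases (shared left end, shared right end, strict crossing, strict containment) and your $-2s$ plus $-1$-per-one-sided-overhang accounting is correct, including the degenerate case where one interval is the singleton $\{x_k\}$.

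Two small remarks. First, the display $|[I]|^2=\sum_{m=a}^b a_m-2(b-a)$ has a spurious square: with the paper's convention $|v|=\langle v,v\rangle$ the left side should simply be $|[I]|$. Second, the final paragraph's framing is slightly redundant: once you know the intervals overlap, Lemma~\ref{lem:Pairing1} already forces $\braket{v_i}{v_j}=0$ (since $|\braket{v_i}{v_j}|=1$ would require $\dagger$, impossible for overlapping intervals), so the computation $|\braket{[I_i]}{[I_j]}|\ge 1$ is the whole contradiction; you do not need to separately worry about the $a_k=3$, strict-crossing case as an edge case, since $a_k-2\ge 1>0$ already suffices against $\braket{v_i}{v_j}=0$.
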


\begin{definition}
A {\it claw} in a graph $G$ is a quadruple $(v;w_1,w_2,w_3)$ of vertices such that $v$ neighbors all the $w_i$, but no two of the $w_i$ neighbor each other.
\end{definition}

\begin{lemma}[Lemma~4.8 of~\cite{greene:LSRP}]\label{claw}
The intersection graph $G(T)$ has no claws.
\end{lemma}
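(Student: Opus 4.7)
The plan is to prove this by a short pigeonhole argument after classifying the ways two intervals can abut. Since $T$ consists of irreducible vectors, Proposition~\ref{prop:IntervalsIrreducible} identifies each $v\in T$ with a signed interval $\pm[v]$, and edges of $G(T)$ depend only on the underlying unsigned intervals. Fix $v$ with $[v]=\{x_a,\dots,x_b\}$, of left endpoint $a$ and right endpoint $b$.

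The first step is to classify intervals $[w]\ne[v]$ that abut $[v]$. Unwinding the definition, abutting $[v]$ means either being consecutive to $[v]$ or sharing a left-to-left or right-to-right endpoint, so exactly one of four alternatives must hold: (a) $[w]$ begins at $a$; (b) $[w]$ ends at $b$; (c) $[w]$ ends at $a-1$; (d) $[w]$ begins at $b+1$. A quick check shows that any two of these cases together force either $[w]=[v]$ (for (a) combined with (b)) or a degenerate interval whose left endpoint exceeds its right endpoint (in all other pairings), so the four cases are mutually exclusive. Call (a) and (c) \emph{left-attached} to $v$, and (b) and (d) \emph{right-attached}.

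The second step is the key observation that any two intervals attached to the same side of $[v]$ automatically abut each other. Two intervals both in case (a) share the endpoint $a$; two both in (c) share $a-1$; one in (a) paired with one in (c) is consecutive, since one begins at $a$ while the other ends at $a-1$; and the right-attached case is symmetric. This reduces to a routine check on endpoint positions.

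Finally, suppose for contradiction that $(v;w_1,w_2,w_3)$ is a claw in $G(T)$. Each $w_i$ is either left- or right-attached to $v$, so by pigeonhole on three objects in two classes two of the $w_i$, say $w_i$ and $w_j$, lie on the same side of $[v]$. By the key observation $[w_i]$ and $[w_j]$ abut, hence $w_i\sim w_j$ in $G(T)$, contradicting the claw condition that no two $w_i$ are adjacent. I do not anticipate any serious obstacle; the content lies entirely in the enumeration of endpoint configurations.
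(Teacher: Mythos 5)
Your argument is correct, and it is essentially Greene's argument for this lemma (which this paper cites without reproducing). The classification of intervals abutting $[v]$ into left-attached ($j_0=a$ or $j_1=a-1$) and right-attached ($j_1=b$ or $j_0=b+1$), followed by the observation that two intervals on the same side of $[v]$ necessarily abut one another and a pigeonhole step, is exactly the standard proof; the only optional embellishment in your write-up is the finer check that the four cases are pairwise exclusive, which isn't strictly needed for the pigeonhole.
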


\begin{definition}
Given a set $T$ of unbreakable elements in a linear lattice and $v_1,v_2,v_3 \in T$, $(v_1,v_2,v_3)$ is a {\it heavy triple} if $|v_i| \ge 3$, and if each pair among the $v_i$ is connected by a path in $G(T)$ disjoint from the third.
\end{definition}

\begin{lemma}[Based on Lemma~4.10 of~\cite{greene:LSRP}]\label{heavytriple}
$G(T)$ has no heavy triples.
\end{lemma}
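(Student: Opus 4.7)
I would argue by contradiction, adapting the strategy of the proof of Lemma~4.10 in~\cite{greene:LSRP}. Suppose $(v_1,v_2,v_3)$ is a heavy triple in $G(T)$, and fix, for each pair $i\ne j$, a minimal path $P_{ij}$ from $v_i$ to $v_j$ in $G(T)$ avoiding $v_k$, where $\{i,j,k\}=\{1,2,3\}$. The key preliminary observation is that consecutive vertices $w_\ell,w_{\ell+1}$ along such a path correspond to intervals that abut, so by induction the union $[w_1]\cup\cdots\cup[w_m]$ is itself a single interval. Applied to each $P_{ij}$, this produces an interval $I_{ij}\supseteq [v_i]\cup[v_j]$.

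Next I would order the intervals along $\{x_0,\ldots,x_n\}$, so that after relabeling the left endpoint of $[v_1]$ is no larger than that of $[v_2]$, which is no larger than that of $[v_3]$. By Lemma~\ref{lem:DistinctHighNorm} the three intervals are pairwise distinct and carry pairwise distinct high-weight vertices. Since $I_{13}$ is a single interval containing both $[v_1]$ and $[v_3]$, the intervals along $P_{13}$ together cover the positional range of $[v_2]$, even though $v_2$ is excluded from the path. In particular, some vector $u$ on $P_{13}$ satisfies $[u]\pitchfork[v_2]$ or strictly contains $[v_2]$.

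The plan is then to exhibit a claw at $v_2$ in $G(T)$, contradicting Lemma~\ref{claw}. Namely, the neighbor $w_1$ of $v_2$ on $P_{12}$ and the neighbor $w_2$ of $v_2$ on $P_{23}$ abut $[v_2]$ from opposite sides, while a third vector $w_3$ abutting $[v_2]$ can be extracted from $P_{13}$ (for example, a neighbor of $u$ on $P_{13}$ that shares an endpoint with $[v_2]$). Using Lemma~\ref{lem:Pairing1} to rule out coincidences $w_i\sim w_j$, one concludes that $(v_2;w_1,w_2,w_3)$ is a claw. The main obstacle is the bookkeeping in the case analysis: one must verify the extraction of $w_3$ in each of the subcases defined by the abutment relations between $[v_1],[v_2],[v_3]$ and $[u]$, treating the degenerate situations where two of the intervals share a common endpoint separately, using Lemma~\ref{lem:DistinctHighNorm}.
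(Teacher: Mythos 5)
Your opening observation is correct: abutting intervals have an interval as their union, so $I_{13} = \bigcup_{u \in P_{13}} [u]$ is a single interval containing $[v_1]$ and $[v_3]$. The problem is the subsequent claw construction, which has genuine gaps. The vector $u$ on $P_{13}$ whose interval meets $[v_2]$ need not \emph{abut} $[v_2]$ (the intervals can overlap interiorly, i.e.\ $[u]\pitchfork[v_2]$), so neither $u$ nor any particular neighbor of $u$ on $P_{13}$ is guaranteed to be adjacent to $v_2$ in $G(T)$: the third leaf $w_3$ of your claw need not exist. Moreover Lemma~\ref{lem:Pairing1} only controls the pairing of two unbreakable vectors of norm $\ge 3$; it says nothing about the $w_i$ being pairwise non-adjacent, and the $w_i$ may well be norm-$2$ vectors to which that lemma does not even apply. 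So the claw at $v_2$ is not obtained, and the contradiction via Lemma~\ref{claw} doesn't close.

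The clean route aims not at $[v_2]$ as a whole but at its unique high-weight vertex. Since each $v_i$ is unbreakable of norm $\ge 3$, $[v_i]$ contains exactly one high-weight vertex $h_i$ (Proposition~\ref{prop:Unbreakable}), and these are pairwise distinct by Lemma~\ref{lem:DistinctHighNorm}. Relabel so that $h_2$ lies between $h_1$ and $h_3$ along the chain $x_0,\dots,x_n$ (whichever $h_i$ is in the middle, use the path between the other two, which exists by the definition of heavy triple). Then $I_{13}$ contains $h_1$ and $h_3$, hence $h_2$, so $h_2 \in [u]$ for some $u$ on $P_{13}$ with $u \ne v_2$. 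If $|u| \ge 3$, then $u$ and $v_2$ are distinct unbreakable vectors of norm $\ge 3$ sharing the high-weight vertex $h_2$, contradicting Lemma~\ref{lem:DistinctHighNorm}. If $|u| = 2$, then every vertex in $[u]$ has weight $2$, so $h_2 \notin [u]$, again a contradiction. No claw is needed. (The paper itself gives no proof, deferring to Greene's Lemma~4.10, whose argument is exactly this high-weight-vertex placement.)
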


\section{The topology of certain cobordisms}\label{sec:cobordism}

In this section, we will consider the topology of a certain cobordism
$W: Y_0\to Y_1$. 
We assume that $W$ is obtained by adding $n+1$ two-handles along a link $L\subset Y_0$, such that one component $L_0$ of $L$ represents a $2$--torsion in $H_1(Y_0)$, and all other components of $L$ are null-homologous in $Y_0$. Moreover, we assume that $|H_1(Y_0)|=4$ and $W$ is negative definite. So $Y_1$ is a rational homology sphere. Let $\iota_i: Y_i\to W$ be the inclusion map, $\iota_i^*: H^2(W)\to H^2(Y_i)$ be the induced maps on cohomology, and $\iota_i^s:\Spin^\C(W)\to\Spin^\C(Y_i)$ be the induced maps on $\Spin^\C$, $i=0,1$.

We make the further assumption that $Y_0$ is the boundary of a compact $4$--manifold $Z_0$ with $H_1(Z_0)\cong\mathbb Z/2\mathbb Z$ and $H_2(Z_0)=0$, and $L_0$ is null-homologous in $Z_0$. Let $Z=Z_0\cup_{Y_0}W$.

From the handle structure of $W$, we can compute
\[
H_1(W)\cong\mathbb Z/2\mathbb Z, H_2(W)\cong\mathbb Z^{n+1}, H_1(W,Y_i)=0, H_2(W,Y_i)\cong\mathbb Z^{n+1}, i=0,1 .
\]
By the Universal Coefficient Theorem,
\[H^2(W)\cong \mathbb Z^{n+1}\oplus\mathbb Z/2\mathbb Z.\]
In particular, there exists a unique torsion class $\alpha\in H^2(W)$. Let $\alpha_i=\iota_i^*(\alpha)$, $i=0,1$.

Since $Z$ is obtained by adding two-handles to $Z_0$, such that all attaching curves are null-homologous in $Z_0$, we have 
\[H_1(Z)\cong H_1(Z_0)\cong\mathbb Z/2\mathbb Z,\] and the map $H_2(Z)\to H_2(Z,Z_0)$ is an isomorphism. 

\begin{lemma}\label{lem:ZtoW}
The map $\iota^*_{W,Z}: H^2(Z)\to H^2(W)$ is injective with image containing $\alpha$. The map $\iota_{Y_0,Z_0}^*: H^2(Z_0)\to H^2(Y_0)$ is injective with image generated by $\alpha_0$. Moreover, $[L_0]\in H_1(Y_0)$ is the Poincar\'e dual of $\alpha_0$.
\end{lemma}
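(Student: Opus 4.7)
My plan is to identify all the relevant relative (co)homology groups of $(Z,W)$ and $(Z_0,Y_0)$ using excision and Poincar\'e--Lefschetz duality, extract both injectivity claims from the resulting long exact sequences of pairs, and establish the Poincar\'e-dual identification by naturality of $PD$ together with the bounding surface $\Sigma_0$ furnished by the hypothesis that $L_0$ is null-homologous in $Z_0$.

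First, by excision $H^*(Z,W)\cong H^*(Z_0,Y_0)$, and Poincar\'e--Lefschetz duality on $(Z_0,Y_0)$, combined with the hypotheses $H_1(Z_0)\cong\mathbb{Z}/2$, $H_2(Z_0)=0$, and $H_3(Z_0)=0$, yields $H^1(Z_0,Y_0)=H^2(Z_0,Y_0)=0$ and $H^3(Z_0,Y_0)\cong\mathbb{Z}/2$. The universal coefficient theorem gives $H^2(Z_0)\cong\mathbb{Z}/2$ and $H^2(Z)\cong\mathbb{Z}^{n+1}\oplus\mathbb{Z}/2$. The cohomology LES of $(Z,W)$ reads
\[
0\longrightarrow H^2(Z)\longrightarrow H^2(W)\longrightarrow H^3(Z,W)\cong\mathbb{Z}/2,
\]
so $\iota^*_{W,Z}$ is injective; and since both $H^2(Z)$ and $H^2(W)$ have torsion subgroup $\mathbb{Z}/2$, an injection must send torsion onto torsion, placing the generator $\alpha$ in the image and proving part~(a). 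The analogous LES of $(Z_0,Y_0)$ yields injectivity of $\iota^*_{Y_0,Z_0}$ with image of order $|H^2(Z_0)|=2$.

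For part~(b), the inclusion $Y_0\hookrightarrow Z$ factors both as $Y_0\hookrightarrow W\hookrightarrow Z$ and as $Y_0\hookrightarrow Z_0\hookrightarrow Z$, so if $\beta\in H^2(Z)$ satisfies $\iota^*_{W,Z}(\beta)=\alpha$ then
\[
\alpha_0=\iota_0^*(\alpha)=\iota^*_{Y_0,Z_0}\bigl(\iota^*_{Z_0,Z}(\beta)\bigr),
\]
placing $\alpha_0$ in $\operatorname{im}(\iota^*_{Y_0,Z_0})$. Nontriviality follows from the UCT identification $H^2(Y_0)_{\mathrm{tors}}\cong\operatorname{Hom}(H_1(Y_0),\mathbb{Q}/\mathbb{Z})$: under it $\alpha_0$ corresponds to the composite $H_1(Y_0)\to H_1(W)\hookrightarrow\mathbb{Q}/\mathbb{Z}$, which is nonzero because $H_1(Y_0)\to H_1(W)$ is surjective with kernel $\langle[L_0]\rangle$ (the single $2$-handle whose attaching circle is not already null-homologous in $Y_0$ is $L_0$). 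Hence $\alpha_0$ generates the order-$2$ image.

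For the Poincar\'e-dual statement, write $\alpha_0=\iota^*_{Y_0,Z_0}(\beta_0)$ with $\beta_0$ the generator of $H^2(Z_0)\cong\mathbb{Z}/2$. By naturality of Poincar\'e--Lefschetz duality for the pair $(Z_0,Y_0)$,
\[
PD_{Y_0}(\alpha_0)=\partial\bigl(PD_{Z_0}(\beta_0)\bigr),
\]
where $\partial\colon H_2(Z_0,Y_0)\to H_1(Y_0)$ is the connecting homomorphism. Let $\gamma_0:=PD_{Z_0}(\beta_0)$, which is the generator of $H_2(Z_0,Y_0)\cong\mathbb{Z}/2$. Since $L_0$ is null-homologous in $Z_0$, some $2$-chain $\Sigma_0\subset Z_0$ satisfies $\partial\Sigma_0=L_0$; its relative class $[\Sigma_0]\in H_2(Z_0,Y_0)$ has $\partial[\Sigma_0]=[L_0]\neq 0$ in $H_1(Y_0)$, so $[\Sigma_0]$ must be the nontrivial element and hence equals $\gamma_0$. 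Therefore $PD(\alpha_0)=\partial[\Sigma_0]=[L_0]$. The main obstacle is precisely this last step: invoking Poincar\'e--Lefschetz naturality correctly and then leveraging the geometric hypothesis that $L_0$ bounds in $Z_0$ to pin the generator $\gamma_0$ of $H_2(Z_0,Y_0)$ down to $[\Sigma_0]$; the remaining steps are routine bookkeeping with excision, the universal coefficient theorem, and the long exact sequences of pairs.
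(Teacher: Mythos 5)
Your proof is correct and follows essentially the same route as the paper: injectivity via excision and the vanishing $H^2(Z,W)\cong H^2(Z_0,Y_0)=0$ (which comes from $H_2(Z_0)=0$ by Lefschetz duality), placement of $\alpha_0$ in the image of $\iota^*_{Y_0,Z_0}$ via the factorization of $Y_0\hookrightarrow Z$ through both $W$ and $Z_0$, and the Poincar\'e-dual identification via naturality of Lefschetz duality together with the bounding surface for $L_0$. Your explicit UCT/$\operatorname{Ext}$ argument for $\alpha_0\neq 0$ is a welcome addition that makes fully precise a point the paper leaves somewhat implicit at this stage (it is independently established in Lemma~\ref{lem:KerIota}(1) by a different long-exact-sequence argument).
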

\begin{proof}
Using the long exact sequences
\[
H^2(Z,W)\to H^2(Z)\to H^2(W),\quad H^2(Z_0,Y_0)\to H^2(Z_0)\to H^2(Y_0),
\]
and the fact that $0=H^2(Z_0,Y_0)\cong H^2(Z,W)$, we get that $\iota^*_{W,Z}$ and $\iota_{Y_0,Z_0}^*$ are injective.

By the Universal Coefficient Theorem,  $H^2(Z)\cong Hom(H_2(Z),\mathbb Z)\oplus \mathbb Z/2\mathbb Z$, so it has a unique $2$--torsion $\overline{\alpha}$. Since $\iota^*_{W,Z}$ is injective, $\iota^*_{W,Z}(\overline{\alpha})$ is a $2$--torsion in $H^2(W)$, which must be $\alpha$. Let $\overline{\alpha}_0$ be the restriction of $\overline{\alpha}$ to $H^2(Z_0)$. Using the commutative diagram
\[
\xymatrix{
H^2(Z)\ar[r]\ar[d] &H^2(Z_0)\ar[d]\\
H^2(W)\ar[r] &H^2(Y_0)
},
\]
we see that $\iota_{Y_0,Z_0}^*(\overline{\alpha}_0)=\alpha_0$. 
Since $H^2(Z_0)\cong\Z/2\Z$, the image of $\iota_{Y_0,Z_0}^*$ is generated by $\alpha_0$.

Since $L_0$ is null-homologous in $Z_0$, there exists a properly embedded oriented surface $F_0\subset Z_0$ such that $\partial F_0=L_0$. Thus the image of the Poincar\'e dual of $[F_0]$ under $\iota_{Y_0,Z_0}^*$ is the Poincar\'e dual of $[L_0]$. Since both $[L_0]$ and $[\alpha_0]$ have order $2$, and $\iota_{Y_0,Z_0}^*(\overline{\alpha}_0)=\alpha_0$, we get that $[L_0]$ is the Poincar\'e dual of $\alpha_0$.
\end{proof}

\begin{lemma}\label{lem:KerIota}
(1) For $i=0,1$, we have $\ker\iota_i^*\cong H^2(W,Y_i)$, and $\iota_i^*$ is surjective. In particular, $\alpha_i\ne0$ in $H^2(Y_i)$.
\newline(2) The kernel of the restriction map $(\iota_0')^*:\ker\iota_1^*\to H^2(Y_0)$ is isomorphic to $H^2(W,\partial W)$, and its image is generated by $\alpha_0$.
\end{lemma}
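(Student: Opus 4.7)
My plan for Part~(1) is to combine the long exact sequence of the pair $(W,Y_i)$ in cohomology with Lefschetz duality. Each $Y_i$ is a closed $3$--manifold with finite first homology (for $Y_0$ by hypothesis, for $Y_1$ since it is a rational homology sphere), so the universal coefficient theorem gives $H^1(Y_i)=0$. Lefschetz duality gives $H^3(W,Y_i)\cong H_1(W,Y_{1-i})=0$ by hypothesis. Both $\ker\iota_i^*\cong H^2(W,Y_i)$ and surjectivity of $\iota_i^*$ then drop out of the pair sequence. Finally, Lefschetz duality also gives $H^2(W,Y_i)\cong H_2(W,Y_{1-i})\cong \Z^{n+1}$, which is torsion free, so the $2$--torsion class $\alpha\in H^2(W)$ cannot lie in $\ker\iota_i^*$, giving $\alpha_i\neq 0$.

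For the kernel statement in Part~(2), I would write down the long exact sequence of the triple $(W,\partial W,Y_1)$. Using $\partial W=Y_0\sqcup Y_1$ and excision, $H^k(\partial W,Y_1)\cong H^k(Y_0)$, and with $H^1(Y_0)=0$ as above, the relevant portion becomes
\[
0\to H^2(W,\partial W)\to H^2(W,Y_1)\to H^2(Y_0).
\]
Identifying $H^2(W,Y_1)$ with $\ker\iota_1^*$ via Part~(1), and tracing through the naturality of the inclusion maps, the rightmost map becomes $(\iota_0')^*$. Exactness then gives $\ker(\iota_0')^*\cong H^2(W,\partial W)$.

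For the image statement, I would proceed via Poincar\'e--Lefschetz duality. Let $\Phi=(\iota_0^*,\iota_1^*)\colon H^2(W)\to H^2(\partial W)$. The long exact sequence of the pair $(W,\partial W)$ identifies the image of $\Phi$ with the kernel of the coboundary $\delta\colon H^2(\partial W)\to H^3(W,\partial W)$. Under the duality isomorphisms $H^2(\partial W)\cong H_1(\partial W)$ (Poincar\'e) and $H^3(W,\partial W)\cong H_1(W)$ (Lefschetz), the map $\delta$ corresponds to the inclusion-induced map $i_*\colon H_1(\partial W)\to H_1(W)$. The image of $(\iota_0')^*$ equals $\{a\in H^2(Y_0):(a,0)\in\text{image}(\Phi)\}$, which under Poincar\'e duality becomes $\ker\bigl(H_1(Y_0)\to H_1(W)\bigr)$. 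From the handle description of $W$, attaching the $2$--handles along $L$ kills $[L_0]$ (and nothing else new, since the remaining components are null homologous), so $H_1(W)\cong H_1(Y_0)/\langle[L_0]\rangle$ and this kernel is $\langle[L_0]\rangle$. By Lemma~\ref{lem:ZtoW}, $[L_0]$ is Poincar\'e dual to $\alpha_0$, so the image is $\langle\alpha_0\rangle$, as desired.

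The most delicate point to pin down is the identification of the coboundary $\delta$ with the inclusion-induced map $i_*$ on first homology under Poincar\'e--Lefschetz duality. This is a standard naturality property of cap-product duality with respect to the long exact sequence of a pair, but care is required with signs and orientations so that the subgroup $\{a:(a,0)\in\text{image}(\Phi)\}\subset H^2(Y_0)$ really corresponds on the homology side to $\ker\bigl(H_1(Y_0)\to H_1(W)\bigr)$ rather than to some related but different subgroup.
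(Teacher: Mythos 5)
Your argument is correct. Part~(1) and the kernel computation in Part~(2) are the same as the paper's: the long exact sequence of $(W,Y_i)$ with $H^1(Y_i)=0=H^3(W,Y_i)$, and the long exact sequence of the triple $(W,\partial W,Y_1)$. Your justification of $\alpha_i\neq 0$ via Lefschetz duality and the free module $H_2(W,Y_{1-i})\cong\Z^{n+1}$ makes explicit what the paper states without elaboration.

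For the image of $(\iota_0')^*$, your route is a bit longer than the paper's but lands on the same essential ingredients. The paper identifies $(\iota_0')^*\colon H^2(W,Y_1)\to H^2(Y_0)$ directly, under Lefschetz/Poincar\'e duality, with the boundary map $\partial_0'\colon H_2(W,Y_0)\to H_1(Y_0)$, and then reads off that $\operatorname{im}\partial_0'=\langle[L_0]\rangle$ from the handle decomposition. You instead pass through the long exact sequence of $(W,\partial W)$, identify $\operatorname{im}(\iota_0^*,\iota_1^*)$ with $\ker\delta$, translate $\delta$ to $i_*\colon H_1(\partial W)\to H_1(W)$ via duality, and compute $\ker\bigl(H_1(Y_0)\to H_1(W)\bigr)=\langle[L_0]\rangle$ from the same handle structure. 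Both invocations of naturality of cap-product duality are of the same standard type; the paper's version avoids the intermediate bookkeeping with $H^2(\partial W)$ and the detour through $H_1(W)$, making it a slightly cleaner path to the same destination, but your argument is complete and the delicate naturality step you flag is no more delicate than the one the paper uses implicitly.
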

\begin{proof}
(1) The first statement follows from the long exact sequence
\[
0=H^1(Y_i)\to H^2(W,Y_i)\to H^2(W)\stackrel{\iota_i^*}{\longrightarrow} H^2(Y_i)\to H^3(W,Y_i)=0.
\]
It follows that $\ker\iota_i^*$ is torision-free, so $\alpha\notin\ker\iota_i^*$. Thus $\alpha_i\ne0$.

(2) By (1), the map $(\iota_0')^*$ can be identified with $H^2(W,Y_1)\to H^2(Y_0)$, which is part of the long exact sequence
\begin{equation*}
0=H^1(\partial W,Y_1) \to
H^2(W,\partial W) \to H^2(W,Y_1) \to H^2(\partial W,Y_1)=H^2(Y_0).
\end{equation*}
Thus $\ker(\iota_0')^*$ is $H^2(W,\partial W)$. 

By Poincar\'e duality, $(\iota_0')^*$ can be identified with the boundary map $\partial_0': H_2(W,Y_0)\to H_1(Y_0)$. By the handle decomposition of $W$, we see that the image of $\partial_0'$ is generated by $[L_0]$.
By Lemma~\ref{lem:ZtoW}, $\mathrm{im}(\iota_0')^*$ is generated by $\alpha_0$.
\end{proof}

\begin{cor}\label{cor:SpincCorb}
For each $\mathfrak t\in\Spin^\C(Y_1)$, there exists a subset \[\mathfrak R(\mathfrak t)=\{\mathfrak r_0,\mathfrak r_1=\mathfrak r_0+\alpha_0\}\subset\Spin^\C(Y_0)\] such that for each $\mathfrak r\in\Spin^\C(Y_0)$, the set
\begin{equation}\label{eq:SpincInter}
(\iota_0^s,\iota_1^s)^{-1}(\mathfrak r,\mathfrak t):=(\iota_0^s)^{-1}(\mathfrak r)\cap (\iota_1^s)^{-1}(\mathfrak t)
\end{equation}
is nonempty if and only if $\mathfrak r\in\mathfrak R(\mathfrak t)$. Moreover, the set (\ref{eq:SpincInter}) is an $H^2(W,\partial W)$--torsor when it is nonempty.
\end{cor}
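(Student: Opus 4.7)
The plan is to unpack the corollary as a bookkeeping consequence of Lemma~\ref{lem:KerIota}, together with the standard principle that, for any compact $4$--manifold $M$ with boundary, the restriction $\iota^s\colon \Spin^\C(M)\to \Spin^\C(\partial M)$ is a map of affine spaces, equivariant with respect to the cohomology pullback $\iota^*\colon H^2(M)\to H^2(\partial M)$. I will also use that a surjective $\iota^*$ forces a surjective $\iota^s$: given any $\mathfrak s\in\Spin^\C(M)$ and any $\mathfrak t\in\Spin^\C(\partial M)$, write $\mathfrak t-\iota^s(\mathfrak s)=\iota^*(\gamma)$ and conclude $\iota^s(\mathfrak s+\gamma)=\mathfrak t$. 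This will yield nonemptiness of the relevant fibers.

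First I would fix $\mathfrak t\in \Spin^\C(Y_1)$ and consider the fiber $F_{\mathfrak t}:=(\iota_1^s)^{-1}(\mathfrak t)$. Since $\iota_1^*$ is surjective by Lemma~\ref{lem:KerIota}(1), so is $\iota_1^s$, and hence $F_{\mathfrak t}$ is a nonempty affine torsor over $\ker\iota_1^*$. The restriction of $\iota_0^s$ to $F_{\mathfrak t}$ is equivariant with respect to the group homomorphism $(\iota_0')^*\colon\ker\iota_1^*\to H^2(Y_0)$ of Lemma~\ref{lem:KerIota}(2). Consequently $\iota_0^s(F_{\mathfrak t})\subset\Spin^\C(Y_0)$ is a torsor over $\mathrm{im}(\iota_0')^*$, which by Lemma~\ref{lem:KerIota}(2) is the two-element subgroup $\{0,\alpha_0\}\subset H^2(Y_0)$. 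Writing $\iota_0^s(F_{\mathfrak t})=\{\mathfrak r_0,\mathfrak r_1\}$ with $\mathfrak r_1=\mathfrak r_0+\alpha_0$ then defines the set $\mathfrak R(\mathfrak t)$, and by construction $(\iota_0^s,\iota_1^s)^{-1}(\mathfrak r,\mathfrak t)$ is nonempty exactly when $\mathfrak r\in\mathfrak R(\mathfrak t)$.

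To finish, I would describe the fiber over any $\mathfrak r\in\mathfrak R(\mathfrak t)$. By definition, $(\iota_0^s,\iota_1^s)^{-1}(\mathfrak r,\mathfrak t)$ is the $\iota_0^s$--fiber above $\mathfrak r$ inside $F_{\mathfrak t}$; by equivariance under $(\iota_0')^*$, this fiber is a torsor over $\ker(\iota_0')^*$, which equals $H^2(W,\partial W)$ by Lemma~\ref{lem:KerIota}(2). The main work has effectively already been done in Lemma~\ref{lem:KerIota}; the principal thing to get right is simply to keep track of the short exact sequence $0\to H^2(W,\partial W)\to\ker\iota_1^*\to\{0,\alpha_0\}\to 0$ and to interpret it correctly at the level of $\Spin^\C$--torsors, after which the corollary is a routine application of equivariance.
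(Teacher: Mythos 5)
Your argument is correct and is exactly the route the paper intends: the paper's proof is the one-line remark that the corollary follows from Lemma~\ref{lem:KerIota} together with the $H^2$--torsor structure of $\Spin^\C$, and your write-up simply unpacks that remark carefully (surjectivity of $\iota_1^s$ from surjectivity of $\iota_1^*$, equivariance of $\iota_0^s$ restricted to the fiber $F_{\mathfrak t}$ with respect to $(\iota_0')^*$, then reading off image $=\{0,\alpha_0\}$ and kernel $=H^2(W,\partial W)$ from Lemma~\ref{lem:KerIota}(2)). Nothing differs in substance; you have merely made the bookkeeping explicit.
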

\begin{proof}
This follows from Lemma~\ref{lem:KerIota} and the fact that $\Spin^\C$ is an $H^2$--torsor.
\end{proof}

By the long exact sequence
\[
0=H_2(Y_0)\to H_2(W)\to H_2(W,Y_0)\to H_1(Y_0),
\]
$H_2(W)$ embeds as an index--$2$ subgroup of $H_2(W,Y_0)\cong\mathbb Z^{n+1}$. Thus we can extend the intersection form on $H_2(W)$ to  $H_2(W,Y_0)$, with value in $\frac14\mathbb Z$.
Let 
\[\mathcal L\cong H_2(W,Y_0)\cong H_2(Z,Z_0)\cong H_2(Z)\] 
be the intersection lattice on the pair $(W,Y_0)$. Suppose that the generators corresponding to the two-handles are $x_0,\dots,x_n$, where $x_0$ corresponds to the two-handle attached along $L_0$. Let \[\mathcal L_0=\langle2x_0,x_1,\dots,x_n\rangle\] be the sublattice of $\mathcal L$ generated by $2x_0,x_1,\dots,x_n$; then $\mathcal L_0$ can be identified with the intersection lattice $H_2(W)$. Let \[\mathcal L^*=Hom(\mathcal L,\mathbb Z),\mathcal L^*_0=Hom(\mathcal L_0,\mathbb Z)\supset \mathcal L^*.\] Using the inner product on $\mathcal L$, we can embed $\mathcal L^*$ and $\mathcal L_0^*$ as sublattices of $\mathcal L\otimes\mathbb Q$.

Let
\[\widetilde{\mathcal C}=\{y\in \mathcal L_0^*|\braket{y}{2x_0}\equiv\braket{2x_0}{2x_0}, \braket{y}{x_j}\equiv\braket{x_j}{x_j}\pmod2,\quad j>0\}.\]
Let $\overline H^2(W)=H^2(W)/\mathrm{Tors}=\mathcal L_0^*$, and let $\bar c_1: \Spin^\C(W)\to \overline H^2(W)$ be the composition of the map $c_1:\Spin^\C(W)\to H^2(W)$ and the quotient map $H^2(W)\to \overline H^2(W)$.
Then $\widetilde{\mathcal C}$ is the image of $\bar c_1$.

\begin{prop}\label{prop:SpincIden}
(1) The quotient $\Spin^\C(Y_1)/\langle\alpha_1\rangle$ can be identified with $\widetilde{\mathcal C}/2\mathcal L$.
\newline(2) Under the previous identification, suppose that the $\langle\alpha_1\rangle$--orbit $\{\mathfrak t,\mathfrak t+\alpha_1\}$ is identified with $y+2\mathcal L$ for some $y\in\widetilde{\mathcal C}$. Let $\mathfrak R(\mathfrak t)=\{\mathfrak r_0,\mathfrak r_1\}$. Then there exist $y_0,y_1\in y+2\mathcal L$, such that
\[
\bar c_1((\iota_0^s,\iota_1^s)^{-1}(\mathfrak r_0,\mathfrak t))=y_0+2\mathcal L_0, \quad\bar c_1((\iota_0^s,\iota_1^s)^{-1}(\mathfrak r_1,\mathfrak t))=y_1+2\mathcal L_0,
\]
and
\[
\bar c_1((\iota_0^s,\iota_1^s)^{-1}(\mathfrak r_0,\mathfrak t+\alpha_1))=y_1+2\mathcal L_0, \quad\bar c_1((\iota_0^s,\iota_1^s)^{-1}(\mathfrak r_1,\mathfrak t+\alpha_1))=y_0+2\mathcal L_0.
\]
\end{prop}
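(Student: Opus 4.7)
The plan is to lift everything to $\Spin^\C(W)$, which is a transitive $H^2(W)$-torsor, and exploit the identity $c_1(\mathfrak s + a) = c_1(\mathfrak s) + 2a$. Since $H^2(W) = \mathcal L_0^* \oplus \langle\alpha\rangle$ with $\langle\alpha\rangle \cong \mathbb Z/2$ the unique $2$-torsion, the reduction $\bar c_1 \colon \Spin^\C(W) \to \overline H^2(W) = \mathcal L_0^*$ is surjective onto $\widetilde{\mathcal C}$ with every fiber an $\langle\alpha\rangle$-orbit, and the $H^2(W)$-action descends to $\bar c_1(\mathfrak s + a) = \bar c_1(\mathfrak s) + 2\bar a$. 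Using Poincar\'e--Lefschetz duality, one identifies $\ker\iota_1^* = H^2(W,Y_1)$ with $H_2(W,Y_0) = \mathcal L$, so the composite $\ker\iota_1^* \hookrightarrow H^2(W) \to \mathcal L_0^*$ embeds as $\mathcal L \subset \mathcal L_0^*$.

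Part~(1) then follows formally: $\Spin^\C(Y_1)/\langle\alpha_1\rangle$ is the quotient of $\Spin^\C(W)$ by $\ker\iota_1^* \oplus \langle\alpha\rangle = \mathcal L \oplus \langle\alpha\rangle$, the sum being direct because $\alpha_1 \ne 0$ by Lemma~\ref{lem:KerIota}(1). Applying $\bar c_1$ annihilates $\langle\alpha\rangle$ and converts the $\mathcal L$-action into translation by $2\mathcal L$ on $\mathcal L_0^*$, yielding the bijection with $\widetilde{\mathcal C}/2\mathcal L$.

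For part~(2), fix via Corollary~\ref{cor:SpincCorb} a base $\mathfrak s_0 \in (\iota_0^s,\iota_1^s)^{-1}(\mathfrak r_0,\mathfrak t)$ and set $y_0 := \bar c_1(\mathfrak s_0)$. The preimage $(\iota_1^s)^{-1}\{\mathfrak t,\mathfrak t+\alpha_1\}$ is the $(\mathcal L \oplus \langle\alpha\rangle)$-orbit of $\mathfrak s_0$, and by Corollary~\ref{cor:SpincCorb} this orbit is exactly the disjoint union of the four sets $(\iota_0^s,\iota_1^s)^{-1}(\mathfrak r_i,\mathfrak t^{(j)})$ of interest. Lemma~\ref{lem:KerIota}(2) says $\iota_0^* \colon \mathcal L \to H^2(Y_0)$ has image $\langle\alpha_0\rangle$ and kernel $H^2(W,\partial W) \cong H_2(W) = \mathcal L_0$, giving the splitting $\mathcal L = \mathcal L_0 \sqcup (x_0 + \mathcal L_0)$ with $\iota_0^*(\ell) = 0$ or $\alpha_0$ on the two cosets. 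Setting $y_1 := y_0 + 2x_0 \in y + 2\mathcal L$, a direct computation for $\mathfrak s = \mathfrak s_0 + \ell + \epsilon\alpha$ gives $\iota_1^s(\mathfrak s) = \mathfrak t + \epsilon\alpha_1$, $\iota_0^s(\mathfrak s) = \mathfrak r_0 + \iota_0^*(\ell) + \epsilon\alpha_0$, and $\bar c_1(\mathfrak s) = y_0 + 2\ell$ (the $\alpha$-contribution vanishes); enumerating the four cases determined by the coset of $\ell$ in $\mathcal L/\mathcal L_0$ and $\epsilon \in \{0,1\}$ produces the four asserted equalities.

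The main bookkeeping hurdle is to make the Poincar\'e--Lefschetz identification $\ker\iota_1^* \cong \mathcal L$ compatible with the embedding into $\mathcal L_0^*$, and to verify that the kernel of the further restriction to $H^2(Y_0)$ is precisely the index-$2$ sublattice $\mathcal L_0 \subset \mathcal L$ rather than some other index-$2$ sublattice. Both are standard applications of Poincar\'e--Lefschetz duality, but are the one point in the proof where one must distinguish carefully between $H_2(W)$ and $H_2(W,Y_0)$; once this is done, the rest of the argument is purely formal torsor bookkeeping.
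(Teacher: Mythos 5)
Your proposal is correct and follows essentially the same strategy as the paper: lift to $\Spin^{\C}(W)$, use Lemma~\ref{lem:KerIota} to identify $\ker\iota_1^*\cong\mathcal L$ and the kernel of the further restriction with $\mathcal L_0$, and exploit the $\langle\alpha\rangle$-translation and $c_1(\mathfrak s+a)=c_1(\mathfrak s)+2a$. The only difference is expository: the paper obtains $\mathfrak s_0,\mathfrak s_1$ directly from Corollary~\ref{cor:SpincCorb} and observes that adding $\alpha$ swaps the pair, whereas you give a uniform parametrization $\mathfrak s=\mathfrak s_0+\ell+\epsilon\alpha$ and enumerate the four cases explicitly, with the concrete choice $y_1=y_0+2x_0$.
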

\begin{proof}
(1) By Lemma~\ref{lem:KerIota}, every $\mathfrak t\in\Spin^\C(Y_1)$ is in the image of $\iota^s_1$, and $\mathfrak s_1,\mathfrak s_2\in\Spin^\C(W)$ restrict to the same $\mathfrak t\in\Spin^\C(Y_1)$ if and only if $\mathfrak s_1-\mathfrak s_2\in H^2(W,Y_1)\cong H_2(W,Y_0)=\mathcal L$. So $\Spin^\C(Y_1)\cong \Spin^\C(W)/\mathcal L$. Consider the map $\bar c_1: \Spin^\C(W)\to \widetilde{\mathcal C}$. It is surjective, and $\bar c_1(\mathfrak s_1)=\bar c_1(\mathfrak s_2)$ if and only if  $\mathfrak s_1-\mathfrak s_2\in \langle\alpha\rangle$. Using the formula
\[
c_1(\mathfrak s_1)-c_1(\mathfrak s_2)=2(\mathfrak s_1-\mathfrak s_2)
\]
we get that $\Spin^\C(Y_1)/\langle\alpha_1\rangle\cong\Spin^{\C}(W)/(\mathcal L+\langle\alpha\rangle)\cong\widetilde{\mathcal C}/2\mathcal L$.

(2) By Corollary~\ref{cor:SpincCorb}, there exist $\mathfrak s_0,\mathfrak s_1\in\Spin^\C(W)$, such that
\[
(\iota_0^s,\iota_1^s)^{-1}(\mathfrak r_0,\mathfrak t)=\mathfrak s_0+\mathcal L_0,\quad(\iota_0^s,\iota_1^s)^{-1}(\mathfrak r_1,\mathfrak t)=\mathfrak s_1+\mathcal L_0.
\]
Since
\[\iota_0^s(\mathfrak s_1+\alpha)=\iota_0^s(\mathfrak s_1)+\alpha_0=\mathfrak r_1+\alpha_0=\mathfrak r_0, \iota_0^s(\mathfrak s_0+\alpha)=\mathfrak r_1,\]
we also have
\[
(\iota_0^s,\iota_1^s)^{-1}(\mathfrak r_0,\mathfrak t+\alpha_1)=\mathfrak s_1+\alpha+\mathcal L_0,\quad(\iota_0^s,\iota_1^s)^{-1}(\mathfrak r_1,\mathfrak t+\alpha_1)=\mathfrak s_0+\alpha+\mathcal L_0.
\]
Applying $\bar c_1$ to the above equalities, we get our conclusion.
\end{proof}

For any $\mathfrak s\in\Spin^\C(W)$, let
\begin{equation}\label{eq:gr}
gr(W,\mathfrak s)=\frac{c_1^2(\mathfrak s)+b_2(W)}4.
\end{equation}
For any $\mathfrak t\in\Spin^\C(Y_1)$, let
\begin{equation}\label{eq:D}
D_W(Y_1,\mathfrak t)=\max_{
\begin{array}{c}
\scriptstyle\mathfrak s\in\Spin^\C(W)\\
\scriptstyle\mathfrak s|Y_1=\mathfrak t
\end{array}
} (d(Y_0,\mathfrak s|Y_0)+gr(W,\mathfrak s)).
\end{equation}

\begin{lemma}\label{lem:RatBall}
There are exactly two Spin$^c$ structures $\mathfrak e_0,\mathfrak e_1\in\Spin^\C(Y_0)$ which can be extended over $Z_0$. Moreover,
\[\mathfrak e_1=\mathfrak e_0+\alpha_0, \quad d(Y_0,\mathfrak e_i)=0,\quad i=0,1.\]
\end{lemma}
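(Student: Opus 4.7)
The plan has two essentially independent parts: counting the Spin$^c$ extensions and computing the correction terms. Both rest on $Z_0$ being a rational homology ball, i.e.\ $b_1(Z_0)=b_2(Z_0)=0$.

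For the counting step, I would use the Universal Coefficient Theorem: since $H_2(Z_0)=0$ and $H_1(Z_0)\cong \mathbb{Z}/2\mathbb{Z}$, we have $H^2(Z_0)\cong \mathbb{Z}/2\mathbb{Z}$. Every oriented smooth four-manifold admits a Spin$^c$ structure, so $\Spin^\C(Z_0)$ is a torsor over $H^2(Z_0)$ with exactly two elements. The restriction map $\iota^s\colon \Spin^\C(Z_0)\to \Spin^\C(Y_0)$ is equivariant with respect to $\iota^*_{Y_0,Z_0}$, which by Lemma~\ref{lem:ZtoW} is injective with image $\langle \alpha_0\rangle$. Consequently $\iota^s$ is injective, its image consists of exactly two Spin$^c$ structures $\mathfrak{e}_0$ and $\mathfrak{e}_1$, and these differ by $\alpha_0$.

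For the vanishing of $d(Y_0,\mathfrak{e}_i)$, I would invoke the Ozsv\'ath--Szab\'o correction term inequality: for any negative semi-definite four-manifold $W$ bounded by a rational homology sphere $Y$ and any $\mathfrak{s}\in \Spin^\C(W)$ restricting to $\mathfrak{t}$,
\[
c_1(\mathfrak{s})^2 + b_2(W)\le 4\, d(Y,\mathfrak{t}).
\]
Since $b_2(Z_0)=0$ the manifold $Z_0$ is trivially negative semi-definite, and $c_1(\mathfrak{s})$ lies in the $2$-torsion of $H^2(Z_0)$ so $c_1(\mathfrak{s})^2=0$ under the $\mathbb{Q}$-valued intersection pairing. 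Thus the inequality collapses to $d(Y_0,\mathfrak{e}_i)\ge 0$.

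To obtain the reverse inequality, I would apply the same bound to $-Z_0$, which is also a rational homology ball (hence also negative semi-definite) with boundary $-Y_0$. Any extension of $\mathfrak{e}_i$ over $Z_0$ gives, after reversing orientation, an extension over $-Z_0$, and the inequality yields $d(-Y_0,\mathfrak{e}_i)\ge 0$. Using the standard identity $d(-Y,\mathfrak{t})=-d(Y,\mathfrak{t})$ for a rational homology sphere then gives $d(Y_0,\mathfrak{e}_i)\le 0$, and combining pins down $d(Y_0,\mathfrak{e}_i)=0$. No step presents a serious obstacle; the only small subtlety is recognizing that the two-sided bound is obtained by applying the same inequality to $Z_0$ and its orientation reversal.
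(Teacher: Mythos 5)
Your proof is correct and follows essentially the same route as the paper's: the counting of Spin$^c$ extensions via $H^2(Z_0)\cong\Z/2\Z$, Lemma~\ref{lem:ZtoW}, and equivariance of restriction is the same argument. The only difference is cosmetic: for the vanishing of $d(Y_0,\mathfrak e_i)$ the paper simply cites \cite[Proposition~9.9]{OSzAbGr}, while you unwind that citation and reprove it by applying the correction-term inequality to both $Z_0$ and $-Z_0$ -- which is precisely how that proposition is established.
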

\begin{proof}
By Lemma~\ref{lem:ZtoW}, $\alpha_0$ is the restriction of a cohomology class in $H^2(Z_0)$.
Let $\mathfrak e_0\in\Spin^\C(Y_0)$ be a Spin$^c$ structure which is the restriction of a Spin$^c$ structure on $Z_0$, then $\mathfrak e_1:=\mathfrak e_0+\alpha_0$ also extends over $Z_0$. Since $H^2(Z_0)\cong\Z/2\Z$, $\mathfrak e_0,\mathfrak e_1$ are the only two Spin$^c$ structures which can be extended over $Z_0$.
It follows from \cite[Proposition~9.9]{OSzAbGr} that $d(Y_0,\mathfrak e_i)=0$.
\end{proof}

\begin{lemma}\label{lem:ExtendZ0}
The image of \[\bar c_1: (\iota^s_0)^{-1}(\{\mathfrak e_0,\mathfrak e_1\})\to \overline H^2(W)\] is $\mathcal C:=\Ch(\mathcal L)$.
\end{lemma}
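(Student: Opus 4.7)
My plan is to reinterpret the image of $\bar c_1$ on $(\iota_0^s)^{-1}(\{\mathfrak e_0,\mathfrak e_1\})$ as coming from Chern classes of Spin$^c$ structures on the closed-up manifold $Z=Z_0\cup_{Y_0}W$. By Lemma~\ref{lem:RatBall}, $\mathfrak e_0$ and $\mathfrak e_1$ are precisely the Spin$^c$ structures on $Y_0$ extending over $Z_0$, so by the standard Spin$^c$ gluing principle, $\mathfrak s\in\Spin^\C(W)$ satisfies $\iota_0^s(\mathfrak s)\in\{\mathfrak e_0,\mathfrak e_1\}$ if and only if $\mathfrak s=\tilde{\mathfrak s}|_W$ for some $\tilde{\mathfrak s}\in\Spin^\C(Z)$. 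Thus it suffices to show that, as $\tilde{\mathfrak s}$ ranges over $\Spin^\C(Z)$, the image of $c_1(\tilde{\mathfrak s})$ restricted to $W$ and reduced modulo torsion is exactly $\Ch(\mathcal L)\subset\mathcal L_0^*$.

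Next I will identify the restriction $H^2(Z)\to H^2(W)$ on free parts. Since all attaching circles of the two-handles in $W$ are null-homologous in $Z_0$, one already has $H_2(Z)\cong H_2(Z,Z_0)\cong H_2(W,Y_0)=\mathcal L$, torsion-free of rank $n+1$, and $H_1(Z)\cong\Z/2\Z$; by the Universal Coefficient Theorem, $H^2(Z)\cong\mathcal L^*\oplus\Z/2\Z$, while $H^2(W)\cong\mathcal L_0^*\oplus\Z/2\Z$ was established earlier. Under these identifications, $H_2(W)\to H_2(Z)$ corresponds to the sublattice inclusion $\mathcal L_0\into\mathcal L$, so its dual, which is the free-part restriction $H^2(Z)\to H^2(W)$, is the natural inclusion $\mathcal L^*\into\mathcal L_0^*$.

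Both inclusions then follow quickly. For $\subseteq$: for any $\tilde{\mathfrak s}\in\Spin^\C(Z)$, the class $c_1(\tilde{\mathfrak s})\in H^2(Z)$ is characteristic; since $H_2(Z)=\mathcal L$ is torsion-free, this condition only constrains the free part of $c_1(\tilde{\mathfrak s})$, which must therefore lie in $\Ch(\mathcal L)\subset\mathcal L^*$. Restricting to $W$ and discarding torsion then gives $\bar c_1(\tilde{\mathfrak s}|_W)\in\Ch(\mathcal L)\subset\mathcal L^*\subset\mathcal L_0^*$. For $\supseteq$: given $y\in\Ch(\mathcal L)$, regard $y$ as an element of $\mathcal L^*\subset H^2(Z)$; again because $H_2(Z)$ is torsion-free, $y$ is itself a characteristic class on $Z$, so $y=c_1(\tilde{\mathfrak s})$ for some $\tilde{\mathfrak s}\in\Spin^\C(Z)$, and setting $\mathfrak s=\tilde{\mathfrak s}|_W$ gives $\mathfrak s\in(\iota_0^s)^{-1}(\{\mathfrak e_0,\mathfrak e_1\})$ with $\bar c_1(\mathfrak s)=y$.

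The main obstacle I foresee is the bookkeeping around the nested lattices $\mathcal L_0\subsetneq\mathcal L$ and their duals $\mathcal L_0^*\supsetneq\mathcal L^*$ inside $\mathcal L\otimes\Q$, together with the $\Z/2\Z$-torsion summands of $H^2(Z)$ and $H^2(W)$; in particular, one must confirm that the string of excision and long-exact-sequence identifications $H_2(W,Y_0)\cong H_2(Z,Z_0)\cong H_2(Z)$ is genuinely compatible with the natural map $H_2(W)\to H_2(Z)$, so that the free-part restriction on cohomology really is the inclusion $\mathcal L^*\into\mathcal L_0^*$ and the characteristic condition on $Z$ transports to the characteristic condition defining $\Ch(\mathcal L)$.
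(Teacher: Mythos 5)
Your argument is correct and follows essentially the same route as the paper: both proofs reason through Spin$^c$ structures restricting from $Z$ and rely on the identification (via Lemma~\ref{lem:ZtoW}) of the image of $H^2(Z)$ in $\overline H^2(W)$ with $\mathcal L^*\subset\mathcal L_0^*$. The paper is slightly leaner --- it only needs the easy direction of your gluing ``iff'', observes that $\bar c_1\bigl((\iota_0^s)^{-1}(\{\mathfrak e_0,\mathfrak e_1\})\bigr)$ is a $2\mathcal L^*$--torsor, and determines it as $\mathcal C$ because that is the unique $2\mathcal L^*$--torsor containing the image of a single sample $\Spin^\C$ structure --- whereas you prove the full gluing equivalence and verify both set inclusions directly. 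One small imprecision in your $\supseteq$ step: given $y\in\Ch(\mathcal L)\subset\mathcal L^*\subset H^2(Z)$, it is not automatic that $y$ itself equals $c_1(\tilde{\mathfrak s})$ for some $\tilde{\mathfrak s}\in\Spin^\C(Z)$, since the genuine characteristic coset in $H^2(Z;\Z)$ is a torsor over $2H^2(Z)\cong2\mathcal L^*$, a proper subset of the set of all elements pairing to $\langle x,x\rangle\bmod2$ against $H_2(Z)$; the first Chern class realizing the free part $y$ could a priori be $y+\overline\alpha$, where $\overline\alpha$ is the torsion class of $H^2(Z)$. This does not affect your conclusion because $\bar c_1$ quotients out torsion, but it deserves an explicit remark.
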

\begin{proof}
Let $\mathfrak s_0$ be the restriction of a Spin$^c$ structure on $Z$ to $W$, then $\mathfrak s_0\in(\iota^s_0)^{-1}(\{\mathfrak e_0,\mathfrak e_1\})$. Clearly, $\bar c_1(\mathfrak s_0)\in\mathcal C$. By Lemma~\ref{lem:ZtoW}, $\iota^*_{W,Z}$ is injective, so the image of $H^2(Z)$ in $\overline H^2(W)$ can be identified with $Hom(H_2(Z),\Z)=Hom(H_2(W,Y_0),\Z)=\mathcal L^*$. Thus
$\bar c_1((\iota^s_0)^{-1}(\{\mathfrak e_0,\mathfrak e_1\}))$ is a $2\mathcal L^*$--torsor. Since $\mathcal C$ is the unique $2\mathcal L^*$--torsor containing $\bar c_1(\mathfrak s_0)$, our conclusion holds.
\end{proof}

\begin{cor}\label{cor:SumInv}
The sum
\begin{equation}\label{eq:DSum}
\sum_{\mathfrak t\in\Spin^\C(Y_1)}D_W(Y_1,\mathfrak t)
\end{equation}
only depends on the lattice $\mathcal L$ and the correction terms of $Y_0$.
In fact, if we write (\ref{eq:DSum}) as a function
\[
\mathscr D(\mathcal L,\{d_0,d_1\})
\]
of $\mathcal L$ and the multiset $\{d_0,d_1\}$ of the  correction terms of the two Spin$^c$ structures other than $\mathfrak e_0,\mathfrak e_1$, then
\begin{equation}\label{eq:cShift}
\mathscr D(\mathcal L,\{d_0+c,d_1+c\})=\mathscr D(\mathcal L,\{d_0,d_1\})+c|\mathcal L_0^*/\mathcal L|
\end{equation}
for any $c\in\mathbb Q$. Note that, by Proposition~\ref{prop:SpincIden}, $|H_1(Y_1)|=2|\mathcal L_0^*/\mathcal L|$.
\end{cor}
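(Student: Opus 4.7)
The idea is to decompose $\sum_{\mathfrak t\in\Spin^{\C}(Y_1)} D_W(Y_1,\mathfrak t)$ over the $\langle\alpha_1\rangle$-orbits in $\Spin^{\C}(Y_1)$ and evaluate the contribution of each orbit in lattice-theoretic terms. Fix an orbit $\{\mathfrak t,\mathfrak t+\alpha_1\}$, and set $\mathfrak R(\mathfrak t)=\{\mathfrak r_0,\mathfrak r_1\}$ by Corollary~\ref{cor:SpincCorb}. Each preimage $(\iota_0^s,\iota_1^s)^{-1}(\mathfrak r_i,\mathfrak t+j\alpha_1)$ is an $H^2(W,\partial W)\cong\mathcal L_0$-torsor, and by Proposition~\ref{prop:SpincIden}(2) its $\bar c_1$-image is the single $2\mathcal L_0$-coset $y_{i\oplus j}+2\mathcal L_0\subset\widetilde{\mathcal C}$, with $y_0,y_1$ the two representatives supplied in Proposition~\ref{prop:SpincIden}(1). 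In the positive-definite convention $gr(W,\mathfrak s)=\tfrac14(b_2(W)-|\bar c_1(\mathfrak s)|)$, so maximizing $gr$ over such a preimage equals $-d_{\mathcal L_0}([y_{i\oplus j}])$, and the maximum in~\eqref{eq:D} reduces to
\[
D_W(Y_1,\mathfrak t+j\alpha_1)=\max_{i=0,1}\bigl(d(Y_0,\mathfrak r_i)-d_{\mathcal L_0}([y_{i\oplus j}])\bigr).
\]
The orbit-sum $D_W(Y_1,\mathfrak t)+D_W(Y_1,\mathfrak t+\alpha_1)$ is then visibly invariant under the separate relabellings $[y_0]\leftrightarrow[y_1]$ and $d(Y_0,\mathfrak r_0)\leftrightarrow d(Y_0,\mathfrak r_1)$, so depends only on the unordered pairs.

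I would next separate orbits according to $\mathfrak R$. By Lemma~\ref{lem:ExtendZ0} an orbit has $\mathfrak R=\{\mathfrak e_0,\mathfrak e_1\}$ exactly when its representative $y\in\widetilde{\mathcal C}$ can be chosen in $\mathcal C=\Ch(\mathcal L)$; there are $|\Ch(\mathcal L)/2\mathcal L|=|\mathcal L^*/\mathcal L|=\tfrac12|\mathcal L_0^*/\mathcal L|$ such orbits, using $[\mathcal L_0^*:\mathcal L^*]=[\mathcal L:\mathcal L_0]=2$. On each of these orbits $d(Y_0,\mathfrak r_i)=0$ by Lemma~\ref{lem:RatBall}, so its contribution depends only on $\mathcal L$. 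The other $\tfrac12|\mathcal L_0^*/\mathcal L|$ orbits have $\mathfrak R=\{\mathfrak r_0',\mathfrak r_1'\}$, where $\mathfrak r_0',\mathfrak r_1'$ are the two Spin$^{\C}$-structures on $Y_0$ other than $\mathfrak e_0,\mathfrak e_1$, and $\{d(Y_0,\mathfrak r_0'),d(Y_0,\mathfrak r_1')\}=\{d_0,d_1\}$; by the symmetry above, each such orbit's contribution depends only on $\mathcal L$ and on the multiset $\{d_0,d_1\}$. Summing yields a well-defined function $\mathscr D(\mathcal L,\{d_0,d_1\})$.

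For the shift formula, orbits with $\mathfrak R=\{\mathfrak e_0,\mathfrak e_1\}$ are insensitive to $d_0,d_1$. On an orbit with $\mathfrak R=\{\mathfrak r_0',\mathfrak r_1'\}$, each of the two $D_W$ values is a maximum of two affine expressions, each involving exactly one of $d_0,d_1$; shifting both by $c$ therefore shifts each $D_W$ by $c$, contributing $2c$ per orbit. Multiplying by the count $\tfrac12|\mathcal L_0^*/\mathcal L|$ of such orbits yields total shift $c\cdot|\mathcal L_0^*/\mathcal L|$, proving~\eqref{eq:cShift}. The main obstacle is the reduction in the first paragraph: one must use Proposition~\ref{prop:SpincIden}(2) carefully enough to recognize the XOR pattern $i\oplus j$ interchanging the two $\bar c_1$-cosets between $\mathfrak t$ and $\mathfrak t+\alpha_1$, because it is precisely this interchange that produces the orbit-sum symmetries underpinning the whole argument.
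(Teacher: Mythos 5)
Your proposal is correct and takes essentially the same approach as the paper: both decompose the sum over $\langle\alpha_1\rangle$-orbits (equivalently over $\widetilde{\mathcal C}/2\mathcal L$), separate orbits according to whether the representative coset lies in $\mathcal C=\Ch(\mathcal L)$, use Lemmas~\ref{lem:RatBall} and~\ref{lem:ExtendZ0} to reduce to lattice data, and observe that each orbit's contribution is a symmetric function of the unordered pair of correction terms involved. Your explicit XOR bookkeeping via Proposition~\ref{prop:SpincIden}(2) and the orbit count $\tfrac12|\mathcal L_0^*/\mathcal L|$ make the ``exactly half the values shift by $c$'' claim a bit more transparent than the paper's terse assertion, but the substance is identical.
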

\begin{proof}
We will give the procedure of computing (\ref{eq:DSum}) from $\mathcal L$ and the correction terms of $Y_0$. Let $\mathfrak o_0,\mathfrak o_1$ be the two Spin$^c$ structures other than $\mathfrak e_0,\mathfrak e_1$ on $Y_0$.
We choose $[z]\in\widetilde{\mathcal C}/2\mathcal L$. By Proposition~\ref{prop:SpincIden}, $[z]$ corresponds to a pair of Spin$^c$ structures $\mathfrak t_0,\mathfrak t_1=\mathfrak t_0+\alpha_1\in\Spin^\C(Y_1)$. There are exactly two $2\mathcal L_0$--torsors contained in $z+2\mathcal L$, denoted by $\mathcal T_0,\mathcal T_1$.

Next we check whether $z+2\mathcal L$ is contained in $\mathcal C$. If it is contained in $\mathcal C$, it follows from Lemma~\ref{lem:ExtendZ0} that each $\mathfrak t_i$ is cobordant to $\mathfrak e_0$ and $\mathfrak e_1$, $i=0,1$. Since $d(Y_0,\mathfrak e_0)=d(Y_0,\mathfrak e_1)=0$, by Proposition~\ref{prop:SpincIden},
\[
D_W(Y_1,\mathfrak t_0)=D_W(Y_1,\mathfrak t_1)
=0+\max_{y\in z+2\mathcal L}\frac{-\braket{y}{y}+b_2(W)}4.
\]
If $z+2\mathcal L$ is not contained in $\mathcal C$, then each $\mathfrak t_i$ is cobordant to $\mathfrak o_0$ and $\mathfrak o_1$. By Proposition~\ref{prop:SpincIden}, the multiset $\{D_W(Y_1,\mathfrak t_0),D_W(Y_1,\mathfrak t_1)\}$ is equal to
\[
\left\{\max\{d(Y_0,\mathfrak o_0)+\max_{y\in\mathcal T_0}\frac{-\braket{y}{y}+b_2(W)}4, d(Y_0,\mathfrak o_1)+\max_{y\in\mathcal T_1}\frac{-\braket{y}{y}+b_2(W)}4
\},\right.\]
\[\left.
\quad\max\{d(Y_0,\mathfrak o_0)+\max_{y\in\mathcal T_1}\frac{-\braket{y}{y}+b_2(W)}4, d(Y_0,\mathfrak o_1)+\max_{y\in\mathcal T_0}\frac{-\braket{y}{y}+b_2(W)}4
\}
\right\}.
\]

Finally, to get
(\ref{eq:DSum}), we add all the $D_W(Y_1,\mathfrak t_0)+D_W(Y_1,\mathfrak t_1)$ together,  for all $[z]\in\widetilde{\mathcal C}/2\mathcal L$.

The equality (\ref{eq:cShift}) follows from the above procedure, since exactly $\frac12|H_1(Y_1)|$ values of $D_W(Y_1,\mathfrak t)$ are increased by $c$ after inceasing $d(Y_0,\mathfrak o_i)$ by $c$, $i=0,1$.
\end{proof}


\section{Sharp cobordisms}\label{sect:SharpCob}

In this section, we will generalize the notion of sharp $4$--manifolds defined by Greene \cite{GreeneCabling} to $4$--dimensional cobordisms, and prove that certain cobordisms between prism manifolds are sharp. Recall that a smooth, compact, negative definite $4$--manifold $X$ with $\partial X=Y$ is {\it sharp} if for every $\mathfrak t \in \text{Spin}^c(Y)$, there exists some $\mathfrak s\in \text{Spin}^c(X)$ extending $\mathfrak t$ such that
\[
c_1(\mathfrak s)^2 + b_2(X)= 4d(Y, \mathfrak t)
\]
\begin{definition}
Let $W: Y_0\to Y_1$ be a smooth, connected, negative definite cobordism between two rational homology spheres $Y_0$ and $Y_1$. We say $W$ is {\it sharp}, if for any $\mathfrak t\in\Spin^\C(Y_1)$ we have
\[
d(Y_1,\mathfrak t)=D_W(Y_1,\mathfrak t).
\]
Here $D_W$ is defined using the formula (\ref{eq:D}).
\end{definition}

\begin{lemma}\label{lem:DecompSharp}
Let $Y_1,Y_2,Y_3$ be rational homology spheres, $W_1: Y_1\to Y_2$ and $W_2: Y_2\to Y_3$ be two negative definite cobordisms. If $W=W_1\cup_{Y_2}W_2$ is sharp, then $W_2$ is sharp.
\end{lemma}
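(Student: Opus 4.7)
The plan is to exploit the standard $d$--invariant inequality for a negative definite cobordism $V\colon Y\to Y'$ between rational homology spheres: for any $\mathfrak{s}\in\Spin^\C(V)$,
$$d(Y',\mathfrak{s}|Y') \;\ge\; d(Y,\mathfrak{s}|Y) + gr(V,\mathfrak{s}).$$
Applied to $W_2$, this immediately gives $d(Y_3,\mathfrak{t}_3)\ge D_{W_2}(Y_3,\mathfrak{t}_3)$ for every $\mathfrak{t}_3\in\Spin^\C(Y_3)$, so the content of the lemma is the reverse inequality, which is where sharpness of $W$ enters.

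For the reverse direction, I would fix $\mathfrak{t}_3\in\Spin^\C(Y_3)$ and use sharpness of $W$ to pick a $\Spin^\C$ structure $\mathfrak{s}\in\Spin^\C(W)$ with $\mathfrak{s}|Y_3=\mathfrak{t}_3$ realizing the maximum in the definition of $D_W$, so that
$$d(Y_3,\mathfrak{t}_3) \;=\; d(Y_1,\mathfrak{s}|Y_1) + gr(W,\mathfrak{s}).$$
Set $\mathfrak{s}_i=\mathfrak{s}|W_i$ and $\mathfrak{t}_2=\mathfrak{s}|Y_2$. The next step is to split $gr(W,\mathfrak{s})$ across $Y_2$: since $Y_2$ is a rational homology sphere, Mayer--Vietoris gives an orthogonal splitting $H^2(W;\Q)\cong H^2(W_1;\Q)\oplus H^2(W_2;\Q)$, and using $b_2(W)=b_2(W_1)+b_2(W_2)$ together with $c_1(\mathfrak{s})^2=c_1(\mathfrak{s}_1)^2+c_1(\mathfrak{s}_2)^2$ one obtains
$$gr(W,\mathfrak{s}) \;=\; gr(W_1,\mathfrak{s}_1) + gr(W_2,\mathfrak{s}_2).$$

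Now applying the basic inequality to the negative definite cobordism $W_1$ with the Spin$^\C$ structure $\mathfrak{s}_1$ yields
$$d(Y_1,\mathfrak{s}|Y_1) + gr(W_1,\mathfrak{s}_1) \;\le\; d(Y_2,\mathfrak{t}_2),$$
so, combining the two displays,
$$d(Y_3,\mathfrak{t}_3) \;\le\; d(Y_2,\mathfrak{t}_2) + gr(W_2,\mathfrak{s}_2) \;\le\; D_{W_2}(Y_3,\mathfrak{t}_3),$$
where the last inequality is just the definition of $D_{W_2}$, since $\mathfrak{s}_2$ extends $\mathfrak{t}_3$. Together with the opposite inequality from the first paragraph, this gives equality and proves $W_2$ is sharp.

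The only non-trivial point is the additivity of the grading shift $gr$ under gluing along a rational homology sphere. This is a routine computation given the formula $gr(V,\mathfrak{s})=(c_1^2(\mathfrak{s})+b_2(V))/4$ and the fact that $\chi$, $\sigma$, $b_2$ and the squared first Chern class are all additive when the separating boundary has vanishing rational $H_1$ and $H_2$; I do not expect any serious obstacle here, and the rest of the argument is essentially a one-line manipulation of the sharpness definition.
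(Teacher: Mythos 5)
Your proof is correct and is essentially the same argument as the paper's: the paper simply records the additivity $c_1^2(\mathfrak s)=c_1^2(\mathfrak s_1)+c_1^2(\mathfrak s_2)$ (hence of $gr$) and states that the conclusion follows, whereas you spell out the chain of $d$-invariant inequalities — the automatic bound $d\ge D_{W_2}$, the choice of a maximizing $\mathfrak s$ via sharpness of $W$, and the application of the basic cobordism inequality to $W_1$ — that this additivity enables.
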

\begin{proof}
Let $\mathfrak s\in\Spin^\C(W)$ and let $\mathfrak s_i=\mathfrak s|W_i$, $i=1,2$, then
\[
c_1^2(\mathfrak s)=c_1^2(\mathfrak s_1)+c_1^2(\mathfrak s_2).
\]
Our conclusion follows from the the above equality.
\end{proof}

\subsection{A Kirby diagram of $P(p,q)$}\label{subsect:Kirby}

Suppose that \[
\frac{p}{q}=[a_{-1},a_0,\dots,a_n]^-
\] as in (\ref{eq:ContFrac}),
where each $a_i$ is $\ge2$ when $i\ge 0$.

\begin{figure}[ht]
\begin{picture}(340,100)
\put(10,0){\scalebox{0.55}{\includegraphics*
{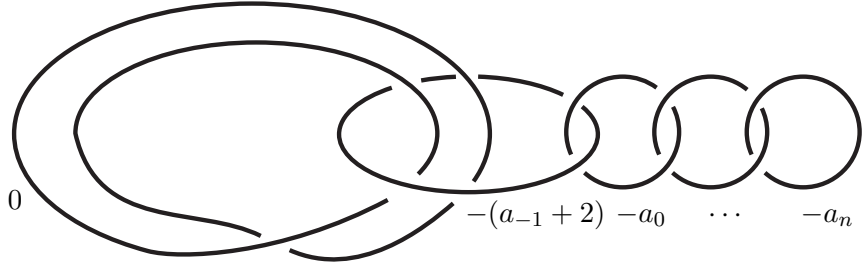}}}

\put(10,20){$0$}

\put(183,15){$-(a_{-1}+2)$}

\put(240,15){$-a_0$}

\put(275,15){$\cdots$}

\put(310,15){$-a_n$}

\end{picture}
\caption{\label{fig:Kirby3}A manifold bounded by $P(p,q)$. If we replace the leftmost component with a dotted circle, we get a negative definite $4$--manifold $Z(p,q)$.}
\end{figure}

Figure~\ref{fig:Kirby3} is a surgery diagram of $P(p,q)$. The leftmost two components give rise to a surgery diagram of $P(a_{-1},1)$, and other components give rise to a negative definite cobordism
\[
W(p,q): P(a_{-1},1)\to P(p,q).
\]

If we replace the leftmost component, which is unknotted with slope $0$, with a dotted circle representing a one-handle, we get a negative definite $4$--manifold $Z(p,q)$ bounded by $P(p,q)$, and the two leftmost components give rise to a rational homology ball $Z_{a_{-1}}$ bounded by $P(a_{-1},1)$, with $H_1(Z_{a_{-1}})=\mathbb Z/2\mathbb Z$.

The main result of this section is the following proposition.

\begin{prop}\label{prop:SharpCobordism}
The cobordism $W(p,q)$ is sharp.
\end{prop}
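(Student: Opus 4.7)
The plan is to prove sharpness of $W(p,q)$ by an equality-of-sums argument. For any negative definite cobordism $W$ between rational homology spheres, the Ozsv\'ath--Szab\'o cobordism inequality
\[
d(Y_1,\mathfrak t) \;\geq\; d(Y_0,\mathfrak s|_{Y_0}) + \frac{c_1^2(\mathfrak s) + b_2(W)}{4} \qquad (\mathfrak s|_{Y_1}=\mathfrak t)
\]
yields the pointwise bound $D_W(Y_1,\mathfrak t) \leq d(Y_1,\mathfrak t)$. Hence sharpness of $W(p,q)$ is equivalent to the single numerical identity
\[
\sum_{\mathfrak t\in\Spin^\C(P(p,q))} D_{W(p,q)}(P(p,q),\mathfrak t) \;=\; \sum_{\mathfrak t\in\Spin^\C(P(p,q))} d(P(p,q),\mathfrak t),
\]
and one checks this identity.

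The left side is computable from the intersection lattice $\mathcal L$ of $W(p,q)$ by Corollary~\ref{cor:SumInv}: it equals $\mathscr D(\mathcal L,\{d_0,d_1\})$, where $\{d_0,d_1\}$ are the correction terms of the two Spin$^\C$ structures $\mathfrak o_0,\mathfrak o_1$ on $P(a_{-1},1)$ that do not extend over the rational homology ball $Z_{a_{-1}}$. These values depend only on $a_{-1}$ and can be written down in closed form, since $P(a_{-1},1)$ is a simple Seifert fibered space whose $d$-invariants follow from the lens-space-like formula~\eqref{eq:pCorr} together with the covering $P(a_{-1},1)$ over the relevant lens space. The right-hand side -- the total correction term of $P(p,q)$ -- can in turn be computed directly from the Seifert fibered plumbing presentation of $P(p,q)$, which is almost rational in Nemethi's sense (i.e., becomes rational after reducing a single central vertex), so that Ozsv\'ath--Szab\'o's algorithm for negative definite plumbings is exact and produces $\sum_{\mathfrak t} d(P(p,q),\mathfrak t)$ as a sum over short characteristic covectors on the plumbing lattice.

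The hard part will be verifying that these two expressions agree. The natural route is induction on $n$, the length of the tail in the continued fraction $p/q = [a_{-1}, a_0, \ldots, a_n]^-$. The base case $n = -1$ is the identity cobordism on $P(a_{-1},1)$ and is trivially sharp. The inductive step extends $W(p,q)$ by one additional $(-a_n)$-framed two-handle and asks that the increment of the left sum, computed lattice-theoretically via the enlargement of $\mathcal L$ by one generator, match the increment of the right sum, computed from the boundary surgery transformation $P(p',q') \rightsquigarrow P(p,q)$. An alternative that avoids this bookkeeping is to show directly that the enlarged four-manifold $Z(p,q) = Z_{a_{-1}} \cup W(p,q)$ is sharp in the generalized sense permitting the torsion $H_1(Z(p,q)) = \mathbb Z/2\mathbb Z$; since $Z_{a_{-1}}$ contributes trivially to $c_1^2$ and, by Lemma~\ref{lem:RatBall}, extends only the zero-$d$ Spin$^\C$ structures of its boundary, restriction from $Z(p,q)$ to $W(p,q)$ then automatically transports the sharpness equality, bypassing the inductive matching.
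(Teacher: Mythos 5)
Your first step is right and matches the paper: because of the Ozsv\'ath--Szab\'o cobordism inequality, $D_W(P(p,q),\mathfrak t)\le d(P(p,q),\mathfrak t)$ for every $\mathfrak t$, so sharpness is equivalent to the single sum identity
\[
\sum_{\mathfrak t} D_{W(p,q)}(P(p,q),\mathfrak t) \;=\; \sum_{\mathfrak t} d(P(p,q),\mathfrak t).
\]
The gap is in how you propose to verify it. Your route to the right-hand side is to extract $\sum_{\mathfrak t} d(P(p,q),\mathfrak t)$ from the negative definite plumbing of $P(p,q)$ as a sum over short characteristic covectors. That formula computes $d$-invariants precisely when the filling is \emph{sharp}, and a central point of the present paper --- stated explicitly in the methodology and the reason the whole sharp-cobordism machinery exists --- is that $P(p,q)$ with $q<p<2q$ does \emph{not} bound a sharp four-manifold. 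The almost-rational framework you invoke does give $HF^+$ of the boundary, but through the N\'emethi tau-function/lattice cohomology computation, not through a short-covector minimization on the plumbing lattice; when the filling is not sharp the latter strictly overestimates the $d$-invariants. Your second suggested route fails for the same reason, only more bluntly: showing that $Z(p,q)=Z_{a_{-1}}\cup W(p,q)$ is sharp is exactly the thing that cannot be done in the range $q<p<2q$. Even if one tried a looser notion "permitting torsion," one would still have to realize each boundary $d$-invariant by a characteristic covector on $\mathcal L^*$, which is what sharpness of $Z(p,q)$ would mean, and that is false here. Finally, even where the proposal is sound (computing the left-hand side via Corollary~\ref{cor:SumInv}), it only sketches an induction on the tail length without supplying the inductive step or checking the base case beyond an assertion, and the bookkeeping it defers is precisely where the real work would live.

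By contrast, the paper sidesteps all direct computation of the right-hand side. It exploits orientation reversal: $\sum_{\mathfrak t} d(P(p,q),\mathfrak t) + \sum_{\mathfrak t} d(P(p,-q),\mathfrak t) = 0$ since $P(p,-q)\cong -P(p,q)$, and the same for $P(p-q,\pm q)$. It then compares the four cobordisms $W, W', W_1, W_1'$ to $P(p,\pm q)$ and $P(p-q,\pm q)$. The latter three are sharp because they sit inside the sharp fillings from the earlier papers (Lemma~\ref{lem:DecompSharp}), and Corollary~\ref{cor:SumInv} together with the lattice isomorphism of Lemma~\ref{lem:IsomH2} converts the sum of $D_W$ over $P(p,q)$ into the sum of $D_{W_1}$ over $P(p-q,q)$ up to an explicit shift, likewise for the primed cobordisms. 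Adding the two chains of (in)equalities produces $0\ge 0$ and forces equality everywhere, with no need to ever compute either total $d$-invariant sum on its own. This is the key mechanism your proposal is missing.
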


For simplicity, we only prove the case $q<p<2q$. The proof of the general case is similar.
From now on, let $W=W(p,q)$.

\subsection{More Kirby diagrams}

We will consider $3$ other cobordisms.

When $q<p<2q$, $a_{-1}=2$. We have
\[
\frac{2q-(p-q)}{q-(p-q)}=1+\frac{q}{2q-p}=[a_0+1,a_1,\dots,a_n]^-,
\]
Consider the following surgery diagram of $P(p-q,q)$. By \cite{Prism2017}, this diagram gives rise to a sharp $4$--manifold bounded by $P(p-q,q)$.
 The component with label $-4$ gives rise to $P(1,1)=L(4,-1)$, and the other two-handles give rise to a cobordism
\[W_1: P(1,1)\to P(p-q,q).\]

\begin{figure}[ht]
\begin{picture}(340,100)
\put(10,0){\scalebox{0.55}{\includegraphics*
{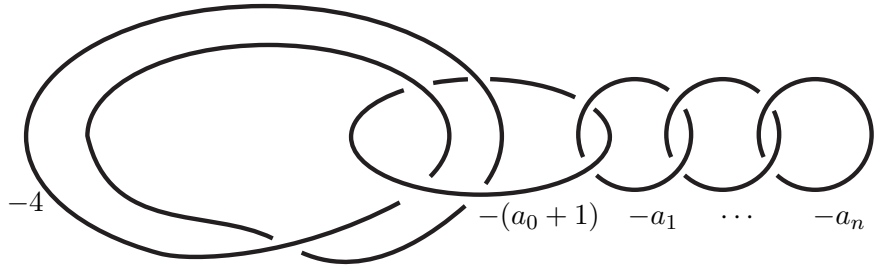}}}

\put(5,20){$-4$}

\put(183,15){$-(a_0+1)$}

\put(240,15){$-a_1$}

\put(275,15){$\cdots$}

\put(310,15){$-a_n$}

\end{picture}
\caption{\label{fig:Kirby4}A sharp $4$--manifold $X(p-q,q)$ bounded by $P(p-q,q)$.}
\end{figure}

\begin{figure}[t]
\begin{picture}(340,170)
\put(90,0){\scalebox{0.7}{\includegraphics*
{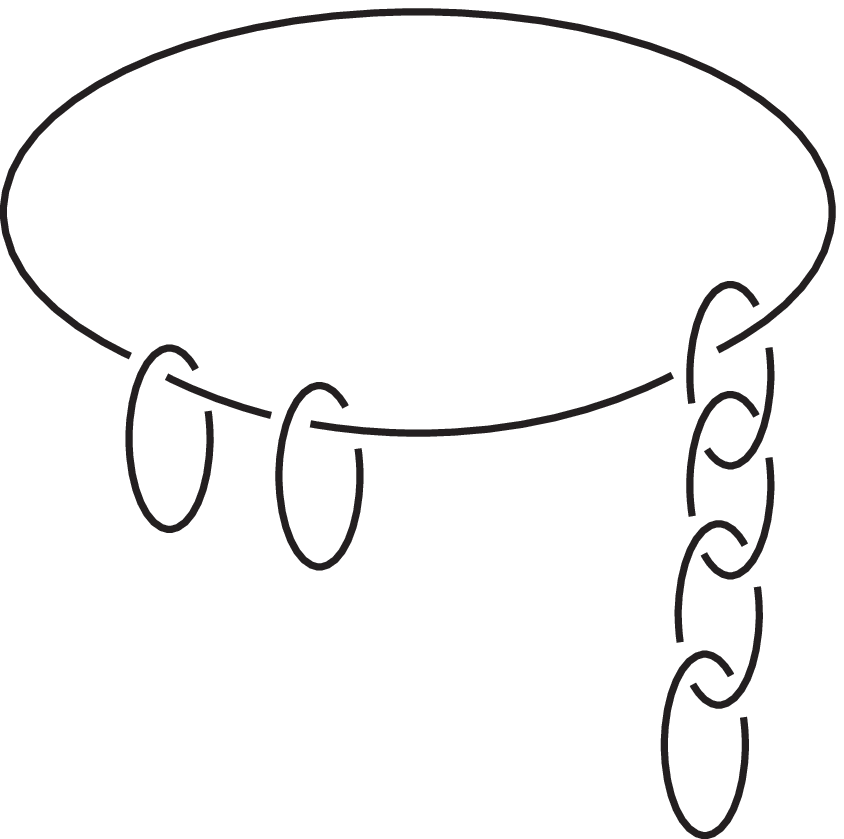}}}

\put(64,130){$-a_0'$}

\put(98,62){$-2$}

\put(150,45){$-2$}

\put(249,90){$-a_1'$}

\put(248,64){$-a_2'$}

\put(254,40){$\vdots$}

\put(244,8){$-a_m'$}
\end{picture}
\caption{\label{fig:Kirby5}A sharp $4$--manifold bounded by $P(p,-q)$}
\end{figure}

\begin{figure}[ht]
\begin{picture}(340,100)
\put(10,0){\scalebox{0.55}{\includegraphics*
{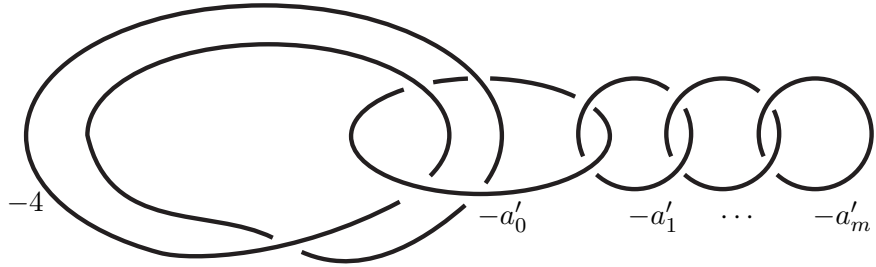}}}

\put(5,20){$-4$}

\put(183,15){$-a'_0$}

\put(240,15){$-a_1'$}

\put(275,15){$\cdots$}

\put(310,15){$-a_m'$}

\end{picture}
\caption{\label{fig:Kirby6}A sharp $4$--manifold bounded by $P(p-q,-q)$.}
\end{figure}

Let
\[
\frac{p+q}{p}=[a_0',a_1',\dots,a_m']^-.
\]
By \cite{Prism2016}, $P(p,-q)$ has a surgery diagram as in Figure~\ref{fig:Kirby5}, which gives rise to a sharp $4$--manifold bounded by $P(p,-q)$.
The two components with label $-2$ give rise to $P(0,1)=\mathbb RP^3\#\mathbb RP^3$, and the other two-handles give rise to a
cobordism
\[W': P(0,1)\to P(p,-q).\]

Using the continued fraction
\[
\frac{-2q-(p-q)}{-q-(p-q)}=\frac{p+q}{p}=[a_0',a_1',\dots,a_m']^-,
\]
by \cite{Prism2017},
we get a surgery diagram of $P(p-q,-q)$ as in Figure~\ref{fig:Kirby6}, which gives rise to a sharp $4$--manifold bounded by $P(p-q,-q)$. The component with label $-4$ gives rise to $P(1,1)=L(4,-1)$, and the other two-handles give rise to a
cobordism
\[W_1':P(1,1)\to P(p-q,-q).\]

By Lemma~\ref{lem:DecompSharp}, $W_1,W',W_1'$ are all sharp cobordisms.

\begin{lemma}\label{lem:IsomH2}
The intersection lattices on $(W,P(2,1))$ and $(W_1,P(1,1))$ are isomorphic; also, the intersection lattices on $(W',P(0,1))$ and $(W_1',P(1,1))$ are isomorphic.
\end{lemma}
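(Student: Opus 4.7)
The plan is to compute both intersection lattices directly from the Kirby diagrams and to verify that the resulting symmetric bilinear forms agree via the natural bases coming from the 2-handles of the respective cobordisms. For the first isomorphism, the relevant diagrams are Figures~\ref{fig:Kirby3} (giving $\mathcal{L}(W)$) and~\ref{fig:Kirby4} (giving $\mathcal{L}(W_1)$); for the second, Figures~\ref{fig:Kirby5} and~\ref{fig:Kirby6}.

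In each of the four cobordisms $W$, $W_1$, $W'$, $W_1'$, the attaching link of the 2-handles consists of a 2-torsion curve $L_0$ in $H_1$ of the initial three-manifold, together with a chain $L_1, L_2, \ldots$ of null-homologous curves each linked once with its neighbor. Take $\{x_0, x_1, \ldots\}$ as the natural basis of $H_2$ of the relevant pair coming from these 2-handles. For $i, j \geq 1$, the pairings $\langle x_i, x_j \rangle$ are integer-valued and read off immediately from the framings and chain linking numbers; these entries coincide in both diagrams of each pair. The only nontrivial comparison is for $\langle x_0, x_0 \rangle \in \tfrac{1}{4}\mathbb{Z}$ (together with the mixed pairing $\langle x_0, x_1 \rangle$), which is computed via the integer quantity $\langle 2x_0, 2x_0 \rangle$ by capping off the doubled 2-handle core with a Seifert surface for the null-homologous curve $2L_0$ in the initial three-manifold, as in Section~\ref{sec:cobordism}.

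For the first isomorphism, the framings of $L_0 \subset P(2,1)$ and $L_0' \subset P(1,1)$ differ by $1$ (namely $-a_0$ versus $-(a_0+1)$). A direct computation shows that the Seifert surface contribution to $\langle 2x_0', 2x_0' \rangle$ in $L(4,-1)$ exceeds the analogous contribution for $2L_0 \subset P(2,1)$ by exactly the amount needed to compensate the $+1$ framing shift, yielding $\langle x_0, x_0 \rangle = \langle x_0', x_0' \rangle$. For the second isomorphism, the framings are identical on both sides (both equal $-a_0'$), and an analogous computation confirms that the Seifert surface contributions in $P(0,1) = \mathbb{RP}^3 \# \mathbb{RP}^3$ and in $P(1,1) = L(4,-1)$ coincide.

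The main obstacle is the careful bookkeeping of these 2-torsion Seifert surface contributions; this reduces to an explicit computation in each three-manifold, which can be streamlined by performing a sequence of Kirby handle slides that convert one diagram into the other (up to the predicted framing shift, in the first case) and hence produce identical intersection matrices on the natural bases.
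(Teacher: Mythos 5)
Your proposal correctly identifies the structure of the lemma: the integer pairings among the null-homologous $2$-handles are read off directly from the linking/framing data and agree trivially, so the crux is computing the rational self-intersection $\braket{x_0}{x_0}$ of the distinguished generator attached along the $2$-torsion curve $L_0$. However, you then write ``a direct computation shows that the Seifert surface contribution ... exceeds the analogous contribution ... by exactly the amount needed,'' which is precisely the nontrivial content of the lemma and is left unverified. What is actually needed is to identify, in each initial $3$-manifold, the slope on $L_0$ that is rationally null-homologous; the pairing $\braket{x_0}{x_0}$ is then the $2$-handle framing measured against that reference slope. The paper does this with a short argument you should supply: for $W$, the $0$-framed longitude of $L_0\subset P(2,1)$ is visibly the rationally null-homologous slope, giving $\braket{x_0}{x_0}=-a_0$; for $W_1$, one observes that filling $K_0\subset P(1,1)$ with slope $-1$ (together with the other two surgered components) yields $P(1,0)$, which has $b_1>0$, so $-1$ is the rationally null-homologous slope and $\braket{x_0}{x_0}=-(a_0+1)-(-1)=-a_0$. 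The analogous argument handles $W'$ and $W_1'$ giving $-(a_0'-1)$ in both cases.

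Separately, your suggestion of ``streamlining'' via Kirby handle slides ``that convert one diagram into the other'' cannot work as stated: the cobordisms $W\colon P(2,1)\to P(p,q)$ and $W_1\colon P(1,1)\to P(p-q,q)$ have different initial and final boundary $3$-manifolds, so their Kirby diagrams (as relative handlebodies) are not related by handle slides. The isomorphism of intersection lattices is an abstract isomorphism of forms, not a diffeomorphism of the cobordisms, and must be established by the framing-slope computation above rather than by Kirby calculus.
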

\begin{proof}
In Figure~\ref{fig:Kirby3}, consider the knot $L_0$ with label $-a_0$. The canonical longitude on $L_0$ is clearly rationally null-homologous in $P(2,1)\setminus L_0$. As a result, the square of the generator of $H_2(W,P(2,1))$ corresponding to the two-handle attached along $L_0$ is $-a_0$.
In Figure~\ref{fig:Kirby4}, consider the knot $K_0$ with label $-(a_0+1)$. If the framing on $K_0$ is $-1$, the manifold we get by doing surgery on the two leftmost components is $P(1,0)$ which has $b_1>0$. Thus the slope $-1$ on $K_0$ is rationally null-homologous in $P(1,1)\setminus K_0$.
As a result, the square of the generator of $H_2(W_1,P(1,1))$ corresponding to the two-handle attached along $K_0$ is $-a_0$. So the intersection lattices on $(W,P(2,1))$ and $(W_1,P(1,1))$ are isomorphic.

Similarly, we see that the square of the generator of $H_2(W',P(0,1))$ and $H_2(W_1',P(1,1))$ corresponding to the two-handle attached along the knot with label $-a_0'$ is $-(a_0'-1)$.
So the intersection lattices are isomorphic.
\end{proof}

\begin{lemma}
All four cobordisms $W,W_1,W',W_1'$ satisfy the assumptions in the beginning of Section~\ref{sec:cobordism}.
\end{lemma}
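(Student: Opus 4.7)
The plan is to verify, for each of $W$, $W_1$, $W'$, and $W_1'$, the topological hypotheses listed at the start of Section~\ref{sec:cobordism}: the cobordism is obtained by attaching two-handles, one along a $2$-torsion curve $L_0$ and the rest along null-homologous curves; $|H_1(Y_0)|=4$; the cobordism is negative definite; and $Y_0$ bounds a compact $4$-manifold $Z_0$ with $H_1(Z_0)\cong\Z/2$, $H_2(Z_0)=0$, and $L_0$ null-homologous in $Z_0$. The four cobordisms are defined by parallel Kirby diagrams (Figures~\ref{fig:Kirby3}, \ref{fig:Kirby4}, \ref{fig:Kirby5}, and \ref{fig:Kirby6}), so the argument is essentially one template applied four times.

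Handle structure and negative definiteness are immediate from the diagrams: each cobordism is a chain of two-handles with negative framings, and each linking matrix is negative definite, as part of the sharpness verification for the ambient four-manifolds $Z(p,q)$, $X(p-q,q)$, the sharp four-manifold of Figure~\ref{fig:Kirby5}, and $X(p-q,-q)$. The identifications $|H_1(P(2,1))|=|H_1(S^3/Q_8)|=4$, $|H_1(P(1,1))|=|H_1(L(4,-1))|=4$, and $|H_1(P(0,1))|=|H_1(\mathbb{RP}^3\#\mathbb{RP}^3)|=4$ are standard.

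For the bounding four-manifold $Z_0$: in the three cases associated to Figures~\ref{fig:Kirby3}, \ref{fig:Kirby4}, and \ref{fig:Kirby6}, one replaces the $0$-framed unknot with a dotted one-handle. The rank-$2$ linking matrix for the leftmost two components has determinant $-\ell^2$ (where $\ell$ is the linking number with the $0$-framed unknot) and must have absolute value $4$, so $\ell=\pm 2$ and the resulting $2$-handle winds twice around the $1$-handle. The cellular chain complex $\Z\xleftarrow{0}\Z\xleftarrow{2}\Z$ then gives $H_1(Z_0)\cong\Z/2$ and $H_2(Z_0)=0$. For $P(0,1)$ one instead takes $Z_0=(\mathbb{RP}^3\setminus B^3)\times[0,1]$, which deformation retracts to $\mathbb{RP}^2$, so has the required homology, and whose boundary is $\mathbb{RP}^3\#\mathbb{RP}^3$. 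The role of $L_0$ is played by the $-a_0$-, $-(a_0+1)$-, or $-a_0'$-framed two-handle immediately adjacent to the portion of the chain creating $Y_0$, as in Lemma~\ref{lem:IsomH2}; all other handles of the cobordism lie in a linear sub-chain unlinked from the surgery components creating $Y_0$, so they are null-homologous in $Y_0$.

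It remains to verify that $L_0$ is null-homologous in $Z_0$ and represents a nontrivial class in $H_1(Y_0)$. The first is a linking-number computation: in the three cases based on Figures~\ref{fig:Kirby3}, \ref{fig:Kirby4}, \ref{fig:Kirby6}, $L_0$ has zero linking number with the $0$-framed unknot now playing the role of the dotted circle, so $[L_0]=0\in H_1(Z_0)$; in the $\mathbb{RP}^3\#\mathbb{RP}^3$ case, $L_0$ meets each of the two $-2$-framed unknots with linking number one, so its class is the sum of two meridians, both of which map to the same generator of $H_1(Z_0)\cong H_1(\mathbb{RP}^2)$. Showing $[L_0]\ne 0\in H_1(Y_0)$ is the main obstacle, and it reduces to exhibiting, in each diagram, a curve in $Y_0$ whose linking pairing $\lk(\cdot,L_0)$ is nonzero modulo $\Z$. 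The bookkeeping is elementary but slightly different in each of the four cases, and the $\mathbb{RP}^3\#\mathbb{RP}^3$ case is genuinely distinct from the lens-space-based cases.
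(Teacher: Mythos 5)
Your plan correctly enumerates the hypotheses of Section~\ref{sec:cobordism} and handles the $P(2,1)$ and $\mathbb{RP}^3\#\mathbb{RP}^3$ cases in the same spirit as the paper, but the treatment of the two $P(1,1)$ cases (the cobordisms $W_1$ and $W_1'$) has a genuine gap. You write that in the cases of Figures~\ref{fig:Kirby4} and~\ref{fig:Kirby6} ``one replaces the $0$-framed unknot with a dotted one-handle,'' but those figures have no $0$-framed unknot: their leftmost component is the $-4$-framed unknot that produces $P(1,1)=L(4,-1)$ on its own, and it is not converted into a $1$-handle. The rational homology ball $Z_1$ for $P(1,1)$ lives in a different Kirby diagram (the $Z_a$ construction from Section~\ref{subsect:Kirby}, using a $0$-framed and a $-3$-framed unknot linking twice), so the subsequent linking-number computation ``$L_0$ has zero linking number with the $0$-framed unknot'' is not a statement about the figures you cite. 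The paper sidesteps this entirely: it takes the existence of $Z_1$ with $H_1(Z_1)\cong\Z/2$ as given, and observes that because $H_1(P(1,1))\cong\Z/4$ is cyclic and the map to $H_1(Z_1)$ is surjective, the kernel is exactly $2H_1(P(1,1))$; reading the linking number $2$ off Figures~\ref{fig:Kirby4}--\ref{fig:Kirby6} shows $[L_0]\in 2H_1(P(1,1))$, which simultaneously establishes both that $L_0$ is $2$-torsion and that it dies in $Z_1$, without ever drawing a Kirby diagram for $Z_1$.

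A second, related issue: you frame the remaining check as ``showing $[L_0]\ne 0\in H_1(Y_0)$'' and defer it as bookkeeping, but this is not quite the right statement and the step you defer is the crux. When $H_1(Y_0)\cong\Z/4$ (the $P(1,1)$ cases), knowing $[L_0]\ne 0$ allows $[L_0]\in\{1,2,3\}$, and if $[L_0]$ were a generator then it would not be $2$-torsion and, worse, would not lie in $\ker(H_1(P(1,1))\to H_1(Z_1))$, so it would not be null-homologous in $Z_1$ either. What must actually be shown is that $[L_0]$ is the unique element of order $2$, i.e.\ $[L_0]\in 2H_1(P(1,1))\setminus\{0\}$. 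That is exactly what the paper's one-line observation about $2H_1(P(1,1))$ delivers. In the $\Z/2\oplus\Z/2$ cases ($P(2,1)$ and $\mathbb{RP}^3\#\mathbb{RP}^3$) every nonzero class is $2$-torsion so your framing is adequate there, but even so the nontriviality is needed (it is what makes $H_1(W)\cong\Z/2$ rather than $\Z/4$ in the computation at the start of Section~\ref{sec:cobordism}), and your proposal leaves it unverified.
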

\begin{proof}
The cobordism $W$ satisfies the assumptions by its construction.

For $W_1,W_1'$, notice that $P(1,1)$ bounds a rational homology ball $Z_1$ with $H_1(Z_1)\cong\mathbb Z/2\mathbb Z$. Since $H_1(P(1,1))$ is cyclic, the kernel of the surjective map $H_1(P(1,1))\to H_1(Z_1)$ is $2H_1(P(1,1))$. From Figures~\ref{fig:Kirby4}~and~\ref{fig:Kirby6}, we see that the knot with label $-(a_0+1)$ or $-a_0'$ represents an element in $2H_1(P(1,1))$. So $W_1,W_1'$ satisfy the assumptions.

For $W'$, the rational ball bounded by $\mathbb RP^3\#\mathbb RP^3$ is $Z_0=(\mathbb RP^3\setminus B^3)\times I$. Clearly, the knot labeled with $-a_0'$ in Figure~\ref{fig:Kirby5} is null-homologous in $Z_0$.
\end{proof}

\subsection{The proof of Proposition~\ref{prop:SharpCobordism}}

Recall from Section~\ref{subsect:Kirby} that $P(a,1)$ bounds a rational homology ball $Z_a$ with $H_1(Z_a)\cong\mathbb Z/2\mathbb Z$. There are exactly two Spin$^c$ structures $\mathfrak e_0,\mathfrak e_1\in\Spin^\C(P(a,1))$ which extend over $Z_a$.
Let $\mathfrak o_0,\mathfrak o_1\in\Spin^\C(P(a,1))$ be two other Spin$^c$ structures, such that $d(P(a,1),\mathfrak o_1)\ge d(P(a,1),\mathfrak o_0)$.

\begin{lemma}\label{lem:CorrPa1}
The correction terms of $P(a,1)$ are
\[
d(P(a,1),\mathfrak e_0)=d(P(a,1),\mathfrak e_1)=0,\]
\[
 d(P(a,1),\mathfrak o_0)=-\frac{a+2}4,\quad d(P(a,1),\mathfrak o_1)=-\frac{a-2}4.
\]
\end{lemma}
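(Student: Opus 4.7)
The two parts of the lemma split naturally. For the first claim, Lemma~\ref{lem:RatBall} shows that $\mathfrak{e}_0,\mathfrak{e}_1$ are exactly the Spin$^c$ structures on $P(a,1)$ extending across the rational homology ball $Z_a$, and Proposition~9.9 of~\cite{OSzAbGr} immediately gives $d(P(a,1),\mathfrak{e}_i)=0$. (This equality is in fact already contained in Lemma~\ref{lem:RatBall}.)

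For the remaining two Spin$^c$ structures, the plan is to exhibit $P(a,1)$ as the boundary of an explicit sharp negative definite four-manifold $X_a$ and invoke
\[
d(P(a,1),\mathfrak{t}) \;=\; \max_{\mathfrak{s}|_{P(a,1)}=\mathfrak{t}} \frac{c_1(\mathfrak{s})^2 + b_2(X_a)}{4}.
\]
A natural candidate for $X_a$ comes from the Seifert presentation $(-1;(2,1),(2,1),(a,1))$: the associated star-shaped plumbing (central weight $-1$, legs of weights $-2,-2,-a$) is not negative definite, but standard Kirby moves (a blow-down of the central $(-1)$--sphere, or a suitable Rolfsen twist that replaces the Seifert data by a presentation with more negative central Euler number) convert it to a small negative definite plumbing. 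Sharpness of $X_a$ follows from Ozsv\'ath--Szab\'o's algorithm for almost-rational plumbings~\cite{OSzPlumbed}. One then enumerates the four cosets of $2H^2(X_a)$ in $\Ch(X_a)$, isolates the two cosets that do not correspond to Spin$^c$ structures extending over $Z_a$, and finds the minimum-norm characteristic covector in each; a short arithmetic computation yields the claimed values $-(a+2)/4$ and $-(a-2)/4$.

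The principal subtlety is bookkeeping---matching the formal labels $\mathfrak{o}_0,\mathfrak{o}_1$ (ordered by the convention $d(\mathfrak{o}_1)\ge d(\mathfrak{o}_0)$) to the appropriate cosets in $\Ch(X_a)/2H^2(X_a)$. This is governed by the order-two class $\alpha_0\in H^2(P(a,1))$ coming from $Z_a$, together with Lemma~\ref{lem:ZtoW}, which pins down which cosets can be represented by Spin$^c$ structures extending over $Z_a$. Useful sanity checks on the final formula: for $a=1$ one recovers the $d$-invariants of the lens space $L(4,3)$ computed from~\eqref{eq:pCorr}, and for $a=2$ one recovers the well-known correction terms of the quaternion space $S^3/Q_8$.
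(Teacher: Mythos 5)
Your first paragraph is exactly what the paper does for the $\mathfrak e_i$, and indeed, as you note, that equality is already recorded in Lemma~\ref{lem:RatBall}. For the remaining two $\Spin^{\mathrm c}$ structures, however, your route diverges sharply from the paper's. The paper does not construct any sharp $4$--manifold at this point: it simply cites Doig's explicit computation \cite[Example~15]{Doig1}, which gives the multiset $\{0,0,-\frac{a+2}{4},-\frac{a-2}{4}\}$ of correction terms for $P(a,1)$, and then identifies the two zeros with $\mathfrak e_0,\mathfrak e_1$ (by Lemma~\ref{lem:RatBall}) and the other two with $\mathfrak o_0,\mathfrak o_1$ via the ordering convention $d(\mathfrak o_1)\ge d(\mathfrak o_0)$. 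That is the entire proof.

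Your proposal instead re-derives these numbers from a sharp negative definite filling, which is essentially how Doig obtains them in the cited example. As a plan this is sound, but in the present form it has a real gap: the key technical step (``find a small negative definite sharp plumbing $X_a$ bounded by $P(a,1)$'') is asserted rather than carried out. You correctly observe that the na\"ive star-shaped plumbing with central weight $-1$ is degenerate, but you never identify the replacement, never verify that it is almost-rational so that \cite{OSzPlumbed} applies, and never enumerate the characteristic covectors. There is also a point that needs care and is not addressed: a negative definite filling of $P(a,1)$ obtained from the dotted-circle diagram (as in the paper's $Z(p,q)$) has $H_1\cong\Z/2\Z$, so not every $\Spin^{\mathrm c}$ structure on the boundary extends over it, and the sharpness formula you write down must be applied only to the ones that do; matching those to $\mathfrak o_0,\mathfrak o_1$ requires exactly the $\alpha_0$-bookkeeping you mention but do not complete. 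As written, the proposal establishes the first half of the lemma and outlines---but does not prove---the second half; replacing that outline with the citation to \cite[Example~15]{Doig1} (or actually performing the lattice computation) would close it.
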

\begin{proof}
The correction terms of $P(a,1)$ are computed in \cite[Example~15]{Doig1}, and they are $\{0,0,-\frac{a+2}4,\frac{a-2}4\}$.
It is a standard fact that $d(P(a,1),\mathfrak e_i)=0, i=0,1$ \cite[Proposition~9.9]{OSzAbGr}. So we must have $d(P(a,1),\mathfrak o_i)=-\frac{a+2}4+i, i=0,1$, by our choice of $\mathfrak o_0,\mathfrak o_1$.
\end{proof}

\begin{proof}[Proof of Proposition~\ref{prop:SharpCobordism} in the case $a_{-1}=2$]
By \cite[Theorem~9.6]{OSzAbGr},
\begin{equation}\label{eq:CorrDW}
d(P(p,q),\mathfrak t)\ge D_W(P(p,q),\mathfrak t).
\end{equation}
Also, since $W_1,W',W_1'$ are sharp, we have
\begin{eqnarray*}
d(P(p-q,q),\mathfrak t_1)&=& D_{W_1}(P(p-q,q),\mathfrak t_1),\\
d(P(p,-q),\mathfrak t)&=& D_{W'}(P(p,-q),\mathfrak t)\\
d(P(p-q,-q),\mathfrak t_1)&=& D_{W_1'}(P(p-q,-q),\mathfrak t_1).
\end{eqnarray*}

By Corollary~\ref{cor:SumInv}, Lemma~\ref{lem:IsomH2} and Lemma~\ref{lem:CorrPa1},
\begin{eqnarray*}
\sum_{\mathfrak t\in\Spin^\C(P(p,q))}D_W(P(p,q),\mathfrak t)&=&-\frac{2q}4+\sum_{\mathfrak t_1\in\Spin^\C(P(p-q,q))}D_{W_1}(P(p-q,q),\mathfrak t_1),\\
-\frac{2q}4+\sum_{\mathfrak t\in\Spin^\C(P(p,-q))}D_{W'}(P(p,-q),\mathfrak t)&=&\sum_{\mathfrak t_1\in\Spin^\C(P(p-q,-q))}D_{W_1'}(P(p-q,-q),\mathfrak t_1).
\end{eqnarray*}
Adding the above two equalities together, and using (\ref{eq:CorrDW}) and the three equalities after it, we get
\begin{eqnarray*}
0&=&\sum_{\mathfrak t\in\Spin^\C(P(p,q))}d(P(p,q),\mathfrak t)+\sum_{\mathfrak t\in\Spin^\C(P(p,-q))}d(P(p,-q),\mathfrak t)\\
&\ge&\sum_{\mathfrak t\in\Spin^\C(P(p,q))}D_W(P(p,q),\mathfrak t)+\sum_{\mathfrak t\in\Spin^\C(P(p,-q))}D_{W'}(P(p,-q),\mathfrak t)\\
&=&\sum_{\mathfrak t_1\in\Spin^\C(P(p-q,q))}D_{W_1}(P(p-q,q),\mathfrak t_1)+\sum_{\mathfrak t_1\in\Spin^\C(P(p-q,-q))}D_{W_1'}(P(p-q,-q),\mathfrak t_1)\\
&=&\sum_{\mathfrak t_1\in\Spin^\C(P(p-q,q))}d(P(p-q,q),\mathfrak t_1)+\sum_{\mathfrak t_1\in\Spin^\C(P(p-q,-q))}d(P(p-q,-q),\mathfrak t_1)\\
&=&0.
\end{eqnarray*}
So the equality in (\ref{eq:CorrDW}) must hold.
\end{proof}

%
\section{The changemaker condition when $q < p<2q$}\label{sec:changemaker}


\subsection{Positive definite manifold with boundary $P(2,1)$}

The goal of this subsection is to prove the following proposition.
\begin{prop}\label{D4}
If $X$ is a positive definite, simply connected four-manifold with $\partial X \cong P(2,1)$, then the intersection form of $X$ is isomorphic to $D_4 \oplus \Z^{n-4}$ for some $n$.
\end{prop}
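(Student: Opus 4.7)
The plan is to close $X$ up into a closed positive definite $4$-manifold and apply Donaldson's Theorem~A. Let $Z_2$ denote the rational homology ball with $\partial Z_2 = P(2,1)$ from Section~\ref{sec:cobordism} (the left two components of Figure~\ref{fig:Kirby3} with the $0$-framed unknot replaced by a dotted circle), which satisfies $H_1(Z_2) \cong \Z/2$ and $H_2(Z_2) = 0$. I would form $\widehat X := X \cup_{P(2,1)} (-Z_2)$ and first verify that it is closed, simply connected, and positive definite. Simply connectedness follows from van Kampen applied to the union, once we observe that the map $\pi_1(P(2,1)) \to \pi_1(-Z_2) = \Z/2$ is surjective: it factors through $H_1(P(2,1)) \to H_1(Z_2)$, which is surjective by the long exact sequence of the pair $(Z_2, P(2,1))$. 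Positive definiteness is automatic because $b_2(-Z_2)=0$, $Q_X$ is positive definite, and $P(2,1)$ is a rational homology sphere. Donaldson's Theorem~A then yields $H_2(\widehat X) \cong \Z^n$ with the standard Euclidean form, where $n = b_2(X)$.

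Next, a Mayer--Vietoris computation shows that $H_2(X)$ embeds into $H_2(\widehat X) = \Z^n$ as an index-$2$ sublattice; the cokernel is the order-$2$ kernel of the surjection $H_1(P(2,1)) \to H_1(-Z_2) = \Z/2$. Any such sublattice is the kernel of a nonzero homomorphism $\Z^n \to \Z/2$, so it takes the form $\{x \in \Z^n : \sum_{i \in S} x_i \equiv 0 \pmod 2\}$ for some non-empty $S \subseteq \{1,\dots,n\}$; setting $k := |S|$, it is abstractly isomorphic to $D_k \oplus \Z^{n-k}$ as an integral lattice (with the conventions $D_1 = \langle 4 \rangle$ and $D_2 = A_1 \oplus A_1$). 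The remainder of the proof pins down $k=4$ via two lattice-theoretic constraints.

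The first constraint is that the discriminant form on $Q_X^*/Q_X$ must coincide with the linking form on $H_1(P(2,1))$; the latter is the hyperbolic form on $(\Z/2)^2$, as can be verified directly or by noting that $P(2,1)$ bounds the positive definite $-D_4$-plumbing. Since $D_k^*/D_k$ is cyclic of order $4$ for $k$ odd and isomorphic to $(\Z/2)^2$ for $k$ even, and the linking form in the even case is hyperbolic precisely when $k \equiv 0 \pmod 4$, this forces $k \in \{4, 8, 12, \dots\}$. The second constraint comes from applying the Ozsv\'ath--Szab\'o $d$-invariant inequality to the negative definite filling $-X$ of $-P(2,1)$: each characteristic coset $[\chi]$ of $Q_X$ satisfies $\min_{\chi \in [\chi]} |\chi|^2 \ge n + 4 d(P(2,1), \mathfrak{t}_{[\chi]})$. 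By Lemma~\ref{lem:CorrPa1} applied with $a=2$, the $d$-invariants of $P(2,1)$ form the multiset $\{0,0,0,-1\}$, so the sorted multiset of minimal characteristic norms of $Q_X$ must dominate $\{n-4, n, n, n\}$ termwise. For $Q_X = D_k \oplus \Z^{n-k}$ with $k \equiv 0 \pmod 4$, the four minimal norms sort to $\{n-k,\ n-k+4,\ n,\ n\}$; dominance forces $k \le 4$. Combined with $k \equiv 0 \pmod 4$ and $k \ge 1$, this gives $k = 4$, and thus $Q_X \cong D_4 \oplus \Z^{n-4}$.

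The main obstacle will be the lattice bookkeeping: confirming the hyperbolicity criterion for the linking form on $D_k^*/D_k$, and computing the minimal characteristic covector norm in each of the four cosets of $D_k$ (which requires working with explicit representatives $0,\ e_1,\ \tfrac12(1,\dots,1),\ e_1+\tfrac12(1,\dots,1) \in D_k^*$ and checking that no reduction modulo $D_k$ shortens them further).
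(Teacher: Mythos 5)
Your argument is correct and follows essentially the same route as the paper's proof: close up $\widehat X = X \cup_{P(2,1)} (-Z_2)$, invoke Donaldson's Theorem~A, identify $H_2(X)$ as an index-two sublattice of $\Z^n$ (hence $D_k \oplus \Z^{n-k}$), then pin down $k=4$ using the linking-form/discriminant constraint from the $D_4$-plumbing together with the $d$-invariant inequality for the negative definite filling $-X$. The only cosmetic differences are that the paper establishes simple connectivity of $\widehat X$ via the handle decomposition of $-Z_2$ rather than van Kampen, and extracts the index-two statement from the long exact sequence of the pair $(\widehat X, X)$ rather than Mayer--Vietoris.
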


\begin{lemma}\label{evensublattice}
If $L \subset \Z^n$ is an index--two sublattice, then $L \cong D_k \oplus \Z^{n-k}$ for some $k \ge 1$. (In fact, there are indices $i_1,\dots,i_k$ such that $L$ contains exactly the elements of $\Z^n$ that have even pairing with $e_{i_1} + \cdots + e_{i_k}$.) 
There are always two elements $x \in \overline L$ with $b(x,x) = 0 \pmod{1}$, and the other two elements satisfy $b(x,x) = k/4 \pmod{1}$.
\end{lemma}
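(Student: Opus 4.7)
The plan is to produce the index set $\{i_1,\dots,i_k\}$ directly from $L$, and then verify both claims by explicit computation. Since $L \subset \mathbb{Z}^n$ has index two, it is the kernel of a unique surjective homomorphism $\phi\colon \mathbb{Z}^n \to \mathbb{Z}/2\mathbb{Z}$, and $\phi$ is determined by the values $\phi(e_i) \in \{0,1\}$. Setting $S = \{i : \phi(e_i) = 1\}$ and $k = |S|$, surjectivity of $\phi$ forces $k \ge 1$. Then $L$ is exactly the set of $v \in \mathbb{Z}^n$ such that $\sum_{i \in S} v_i$ is even, i.e., such that $\langle v, \nu_S \rangle$ is even for $\nu_S := \sum_{i \in S} e_i$. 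This is the $\{i_1,\dots,i_k\}$ description promised in the lemma.

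Splitting $\mathbb{Z}^n$ along $S$ and its complement, the first factor of $L$ is $\{v \in \mathbb{Z}^S : \sum_i v_i \text{ even}\}$, which is the standard presentation of $D_k$, and the second factor is $\mathbb{Z}^{n-k}$. This gives the isomorphism $L \cong D_k \oplus \mathbb{Z}^{n-k}$ stated in the lemma.

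For the discriminant computation, let $\nu := \tfrac{1}{2}\nu_S \in L \otimes \mathbb{Q}$. Then $\nu \in L^*$ because $\langle v, \nu \rangle = \tfrac{1}{2}\sum_{i \in S} v_i \in \mathbb{Z}$ for every $v \in L$, and $\mathbb{Z}^n \subset L^*$ since $\mathbb{Z}^n$ is integral. A short order count gives $[L^* : L] = |\det L| = 4$ and $[\mathbb{Z}^n : L] = 2$, so $[L^* : \mathbb{Z}^n] = 2$, and since $\nu \notin \mathbb{Z}^n$ we conclude $L^* = \mathbb{Z}^n + \mathbb{Z}\nu$. Picking any $i_0 \in S$, the four cosets in $\overline{L} = L^*/L$ are represented by $0$, $e_{i_0}$, $\nu$, and $\nu + e_{i_0}$. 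Evaluating the linking form yields $\langle 0,0 \rangle = 0$, $\langle e_{i_0}, e_{i_0}\rangle = 1 \equiv 0$, $\langle \nu, \nu\rangle = k/4$, and $\langle \nu + e_{i_0}, \nu+e_{i_0}\rangle \equiv k/4 \pmod{1}$ (using $\langle \nu, e_{i_0}\rangle = 1/2$ and $\langle e_{i_0}, e_{i_0}\rangle = 1$), which is precisely the claimed dichotomy.

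I do not anticipate any genuine obstacle; the only mildly subtle point is the identification $L^* = \mathbb{Z}^n + \mathbb{Z}\nu$, which is forced by matching the indices $[L^*:L]=4$ and $[\mathbb{Z}^n:L]=2$ and then observing that $\nu$ provides the unique nontrivial coset outside $\mathbb{Z}^n$.
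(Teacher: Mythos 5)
Your proof is correct and follows essentially the same approach as the paper: identify the index set $S$ (your $\phi^{-1}(1)$ equals the paper's set of $i$ with $e_i\notin L$), decompose $L$ as $D_k\oplus\Z^{n-k}$, then pick the four coset representatives $0$, $e_{i_0}$, $\tfrac12\nu_S$, $\tfrac12\nu_S+e_{i_0}$ and evaluate the linking form. The only local difference is how $L^*$ is pinned down — you count indices via $[L^*:L]=\det L=4$ and $[\Z^n:L]=2$, while the paper describes $L^*$ explicitly by its integer/half-integer entry pattern — but this is a cosmetic variation within the same argument.
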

\begin{proof}
Let $L \subset \Z^n$ have index two, and let $i_1,\dots,i_k$ be an enumeration of the indices $i$ for which $e_i \not \in L$. Since $L$ has index two, the elements $\pm e_{i_j} \pm e_{i_{j'}}$ are all in $L$. Since these elements generate $D_k$, we have $L \cong D_k \oplus \Z^{n-k}$. 

The dual lattice $L^*$ is the set of elements of $\Q^n$ with integral inner product with each element of $L$, and in this representation we have that $L^*$ is the set of vectors with integer components in all entries other than $i_1,\dots,i_k$, and with the components in entries $i_1,\dots,i_k$ either all integers or all half integers. Therefore, the discriminant group $\overline L$ can be represented by the four vectors $0$, $z = e_{i_1}$, and
\begin{align*}
a &= \frac12\left(e_{i_1} + e_{i_2} + \cdots + e_{i_k}\right), \\
b &= \frac12\left(-e_{i_1} + e_{i_2} + \cdots + e_{i_k}\right).
\end{align*}
We have $\braket{z}{z} = 1 \equiv 0 \pmod{1}$, and $\braket{a}{a} = \braket{b}{b} = k/4$.
\end{proof}

\begin{lemma}\label{dinvDk}
The d-invariant of $L = D_{k} \oplus \Z^{n-k}$ takes on the values $0,0,-k/4, 1 - k/4$.
\end{lemma}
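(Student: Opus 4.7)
The plan is to reduce to computing $d_{D_k}$ and then enumerate the four classes in $C(D_k)$ explicitly. Because $d$ is additive under orthogonal direct sums in the sense that $d_{L_1\oplus L_2}([\chi_1]\oplus[\chi_2])=d_{L_1}([\chi_1])+d_{L_2}([\chi_2])$, and every characteristic covector of $\Z^{n-k}$ is equivalent modulo $2\Z^{n-k}$ to one with all entries $\pm1$ (so $d_{\Z^{n-k}}\equiv 0$ on its single class), the set of $d$-invariants of $D_k\oplus\Z^{n-k}$ coincides with that of $D_k$.

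Realizing $D_k=\{y\in\Z^k:\sum y_i\text{ even}\}$, I first pin down the characteristic covectors. Testing $\braket{\chi}{2e_i}\equiv 0\pmod 2$ forces $\chi_i\in\Z$, while testing $\braket{\chi}{e_i\pm e_j}\equiv 0\pmod 2$ forces all $\chi_i$ to share a common parity. Since $D_k$ is even, $0$ is characteristic, so $\Ch(D_k)=2D_k^*$ and $C(D_k)=2D_k^*/2D_k\cong\overline{D_k}$ has order~$4$.

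I will then exhibit the four representatives
\[
\chi_1=(0,\dots,0),\quad \chi_2=(2,0,\dots,0),\quad \chi_3=(1,1,\dots,1),\quad \chi_4=(-1,1,\dots,1),
\]
and check that their pairwise differences (either integer vectors with odd entries or even integer vectors whose entries do not sum to a multiple of~$4$) fail to lie in $2D_k=\{2y:y\in D_k\}$, so these represent the four distinct classes. Adding any element of $2D_k$ preserves the parity of each coordinate. Thus classes~$3$ and~$4$ always have all entries odd, forcing norm at least~$k$, which is attained by $\chi_3$ and $\chi_4$ themselves; class~$1$ trivially has minimum~$0$; and class~$2$ has all entries even, with minimum~$4$, since a smaller representative $(2+2y_1,2y_2,\dots,2y_k)$ with $y\in D_k$ would require $|2+2y_1|<2$ and all other entries zero, forcing $y_1=-1$ and $y_2=\dots=y_k=0$, which violates $\sum y_i\equiv 0\pmod 2$. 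Applying $d=(|\chi|^2-k)/4$ yields the four values $-k/4,\ 1-k/4,\ 0,\ 0$.

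The only delicate point is the minimization in class~$2$, which is handled by the parity constraint above; the rest is parity bookkeeping and the formal properties of $d$-invariants under orthogonal direct sums.
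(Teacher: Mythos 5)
Your proof is correct and follows essentially the same route as the paper: reduce to $D_k$ (you via additivity of $d$ over orthogonal direct sums, the paper via stable isomorphism invariance --- these are equivalent here), then identify the four short characteristic covectors $(0,\dots,0)$, $(2,0,\dots,0)$, $(1,\dots,1)$, $(-1,1,\dots,1)$ and apply the formula $d=(|\chi|^2-k)/4$. The main difference is that you supply the verification the paper leaves implicit --- that these covectors are characteristic, represent the four distinct classes, and are norm-minimizing in their classes (the parity argument for class~2 is the one genuinely nontrivial check, and you handle it correctly).
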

\begin{proof}
The d-invariant is invariant under stable isomorphisms, so we can assume $L = D_k$. Then a set of short representatives of the classes of characteristic covectors is $(1,\dots,1)$, $(-1,1,\dots,1)$, $(0,\dots,0)$, and $(2,0,\dots,0)$. These have norms $k$, $k$, $0$, and $4$. The result now follows: see Equation~\eqref{eq:d}.
\end{proof}

\begin{proof}[Proof of Proposition~\ref{D4}]
As in Section~\ref{subsect:Kirby}, $P(2,1)$ bounds a rational homology ball $Z_2$ with \[H_1(Z_2) \cong \Z/2\mathbb Z, H_2(Z_2)=0.\] If $X$ is any simply connected positive definite 4-manifold with boundary $P(2,1)$, then $\widehat X:=X \cup_{P(2,1)} (-Z_2)$ is a closed, positive definite 4-manifold. Since $\widehat X$ can be obtained from $X$ by attaching a two-handle, a three-handle and a four-handle, $\widehat X$ is also simply connected. By \cite{Donaldson1983}, $\widehat X$ has intersection form $\Z^n$.

In the long exact sequence for the pair $(\widehat X, X)$, we have
\begin{equation*}
H_3(\widehat X, X) \to H_2(X) \to H_2(\widehat X) \to H_2(\widehat X , X) \to H_1(X).
\end{equation*}
We have $H_3(\widehat X, X)\cong H_3(Z_2,\partial Z_2)\cong H^1(Z_2)=0$, $H_2(\widehat X , X)\cong H^2(Z_2)\cong\Z/2\mathbb Z$, $H_1(X)=0$, and both $H_2(X)$ and $H_2(\widehat X)$ are torsionfree. Therefore, we have a short exact sequence
\[
0 \to H_2(X) \to H_2(\widehat X ) \to \Z/2\mathbb Z \to 0,
\]
so $H_2(X)$ is an index-two subgroup of $H_2(\widehat X )$ under the natural inclusion map. Since $\widehat X$ has intersection lattice $\Z^n$, the intersection lattice of $X$ is an index-two sublattice of $\Z^n$, so, by Lemma~\ref{evensublattice}, is isomorphic to $D_k \oplus \Z^{n-k}$.

Let $X_0$ be the positive definite plumbing $4$-manifold with intersection form $D_4$, then $P(2,1) = \partial X_0$.
Since the discriminant group and linking pairing of the intersection form of a 4-manifold are invariants of its boundary, Lemma~\ref{evensublattice} implies that $k$ must be divisible by $4$. Since the d-invariant of the intersection form of a positive definite 4-manifold gives an upper bound on the d-invariant of its boundary \cite{OSzAbGr} and $-X_0$ is sharp \cite{OSzPlumbed}, Lemma~\ref{dinvDk} implies that $k \le 4$. Therefore, $k = 4$, and the result follows.
\end{proof}

\begin{cor}\label{sharp}
Any negative definite, simply connected $4$-manifold with boundary $-P(2,1)$ is sharp.
\end{cor}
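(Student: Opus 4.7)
The plan is to reduce the corollary to Proposition~\ref{D4} together with a multiset comparison of $d$--invariants. Let $X$ be a negative definite, simply connected $4$--manifold with $\partial X = -P(2,1)$. Reversing orientation, $-X$ is positive definite and simply connected with boundary $P(2,1)$, so Proposition~\ref{D4} identifies its intersection form with $D_4\oplus\Z^{n-4}$; equivalently, the positive definite intersection lattice $L = (H_2(X), -Q_X)$ of $X$ is isomorphic to $D_4\oplus\Z^{n-4}$.

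Next I would read off both multisets of $d$--invariant values. By Lemma~\ref{dinvDk} with $k=4$, the four values of $d_L$ on $C(L)$ form the multiset $\{0,0,-1,0\}$. By Lemma~\ref{lem:CorrPa1} at $a=2$, the correction terms of $P(2,1)$ form the same multiset $\{0,0,-1,0\}$; since $d(-Y,\mathfrak t)=-d(Y,\mathfrak t)$, the multiset $\{-d(-P(2,1),\mathfrak t):\mathfrak t\in\Spin^\C(-P(2,1))\}$ is also $\{0,0,-1,0\}$.

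Sharpness of $X$ amounts to the assertion $d_L([\chi])=-d(-P(2,1),\mathfrak t_{[\chi]})$ for every $[\chi]\in C(L)$, where $\mathfrak t_{[\chi]}$ is the $\Spin^\C$ structure on $\partial X$ associated to $[\chi]$ by the natural map $C(L)\to\Spin^\C(-P(2,1))$. That map is a bijection, being surjective (since $H^3(X,\partial X)\cong H_1(X)=0$ forces $H^2(X)\to H^2(\partial X)$ to be surjective) between sets of the same cardinality $|H_1(P(2,1))|=4$. The Ozsv\'ath--Szab\'o correction--term inequality~\cite{OSzAbGr} supplies the pointwise bound $d_L([\chi])\ge -d(-P(2,1),\mathfrak t_{[\chi]})$, and since the two multisets on the two sides agree, summing forces pointwise equality on every coset. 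There is no serious obstacle beyond the numerical coincidence that $D_4\oplus\Z^{n-4}$ and $P(2,1)$ carry the same correction--term multiset, which exactly saturates the correction--term inequality on every $\Spin^\C$ structure.
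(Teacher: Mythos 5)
Your proof is correct, and it is essentially the argument the paper intends, with one useful step made explicit. The paper's one-sentence proof asserts that $X$ has the same intersection lattice as the sharp manifold $-X_0\#(n-4)\overline{\mathbb{C}P^2}$ and stops there; but sharpness is not \emph{formally} a function of the intersection lattice alone, since two $4$-manifolds with the same boundary and the same lattice could in principle give different bijections $C(L)\to\Spin^\C(\partial X)$. What actually closes the argument is exactly your multiset observation: by Lemma~\ref{dinvDk} the lattice $d$-invariant multiset for $D_4\oplus\Z^{n-4}$ is $\{0,0,0,-1\}$, by Lemma~\ref{lem:CorrPa1} the correction-term multiset of $P(2,1)$ is also $\{0,0,0,-1\}$, the restriction map $C(L)\to\Spin^\C(-P(2,1))$ is a bijection (by simple connectivity and counting), and Ozsv\'ath--Szab\'o's inequality gives the pointwise bound in one direction --- so equal multisets force pointwise equality regardless of which bijection the manifold induces. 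So you and the paper use the same ingredients (Proposition~\ref{D4}, Lemma~\ref{dinvDk}, Lemma~\ref{lem:CorrPa1}); you simply spell out the comparison that the paper leaves implicit behind the citation of sharpness of $-X_0$.
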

\begin{proof}
The $4$--manifold $-X_0$ is sharp. By Proposition~\ref{D4}, any negative definite, simply connected 4-manifold with boundary $-P(2,1)$ has the same intersection form as that of $-X_0\#(n-4)\overline{\mathbb{C}P^2}$.
\end{proof}


\subsection{The changemaker condition}

Whenever $q<p < 2q$, using Proposition~\ref{prop:SharpCobordism}, there is a sharp cobordism $W$ from $P(2,1)$ to $P(p,q)$. Suppose $P(p,q)$ is positive surgery on some knot $K\subset S^3$. Let $X = W \cup_{P(p,q)} (-W_{4q}(K))$, then $X$ is a  negative definite manifold with boundary $-P(2,1)$. Since $X$ is obtained from $W_{4q}$ (which is simply connected) by adding two-handles, $X$ is simply connected.
By combining Corollary~\ref{sharp} and Proposition~\ref{D4}, $X$ is sharp and has intersection lattice $-(D_4 \oplus \Z^{n-2})$.  Also, for $Z_2$ the rational homology ball with boundary $P(2,1)$, the manifold $\widehat X=X \cup_{P(2,1)}(-Z_2)$ is closed, simply connected and negative definite, so has intersection lattice $-\Z^{n+2}$. From Kirby diagrams for $W$ and $Z = W \cup_{P(2,1)}(-Z_2)$ (see Figure~\ref{fig:Kirby3}), we can also see that the intersection lattice of $Z$ is the linear lattice $\Lambda(q,-p)$ with vertex basis $x_0,\dots,x_n$, and the intersection lattice of $W$ is (as a sublattice of $\Lambda(q,-p)$) spanned by $2x_0,x_1,\dots,x_n$. Therefore, the following diagram of homology groups
\begin{equation*}
\xymatrix{
H_2(W) \ar[r] \ar[d] & H_2(Z) \ar[d] \\
H_2(X) \ar[r]  & H_2(\widehat X)
}
\end{equation*}
with maps induced by inclusions is isomorphic to the diagram
\begin{equation*}
\xymatrix{
\langle 2x_0, x_1,\dots,x_n \rangle \ar[r] \ar[d] & \langle x_0, x_1,\dots,x_n \rangle = -\Lambda(q,-p) \ar[d] \\
-(D_4 \oplus \Z^{n-2}) \ar[r]  & -\Z^{n+2}.
}
\end{equation*}

\begin{lemma}\label{lem:ZXintersection}
Regarding $H_2(W)$ as subgroups of $H_2(Z)$ and $H_2(X)$, which are subgroups of $H_2(\widehat X)$, then
\[H_2(W)=H_2(Z)\cap H_2(X).\]
\end{lemma}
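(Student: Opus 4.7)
The plan is to apply the Mayer--Vietoris sequence to the decomposition $\widehat X = Z \cup X$ (with $Z \cap X = W$), using open thickenings so that the intersection deformation retracts onto $W$, and to combine this with the injectivity of the relevant inclusion-induced maps on $H_2$.

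First I would verify that each of the inclusion-induced maps $H_2(W) \hookrightarrow H_2(Z)$, $H_2(W) \hookrightarrow H_2(X)$, $H_2(Z) \hookrightarrow H_2(\widehat X)$, $H_2(X) \hookrightarrow H_2(\widehat X)$ is injective, so that the statement of the lemma even makes sense. For $H_2(Z) \hookrightarrow H_2(\widehat X)$, apply Mayer--Vietoris to $\widehat X = Z \cup (-W_{4q})$ with intersection the rational homology sphere $P(p,q)$; the vanishing of $H_2(P(p,q))$ forces $H_2(Z) \oplus H_2(-W_{4q}) \to H_2(\widehat X)$ to be injective, and in particular $H_2(Z) \hookrightarrow H_2(\widehat X)$. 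The other three cases are analogous, using $H_2(P(2,1))=0$ and $H_2(Z_2)=0$ (the latter from the defining hypothesis on $Z_2$ in Section~\ref{sec:cobordism}).

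Next, I would apply Mayer--Vietoris to $\widehat X = Z \cup X$, obtaining the exact sequence
\[
H_2(W) \xrightarrow{(i_*,\, j_*)} H_2(Z) \oplus H_2(X) \xrightarrow{k_* - l_*} H_2(\widehat X),
\]
where $i,j,k,l$ denote the obvious inclusions. Given $\alpha \in H_2(Z) \cap H_2(X) \subseteq H_2(\widehat X)$, the injectivity established above gives unique lifts $a \in H_2(Z)$ and $b \in H_2(X)$ with $k_*(a) = l_*(b) = \alpha$. Then $(a,b)$ lies in the kernel of $k_* - l_*$, so by exactness there is $w \in H_2(W)$ with $i_*(w) = a$ and $j_*(w) = b$. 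The image of $w$ in $H_2(\widehat X)$ is therefore equal to $\alpha$, which shows $\alpha \in H_2(W)$ when the latter is viewed as a subgroup of $H_2(\widehat X)$. This gives $H_2(Z) \cap H_2(X) \subseteq H_2(W)$; the reverse inclusion is immediate from $W \subseteq Z$ and $W \subseteq X$.

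The only subtlety is just the Mayer--Vietoris setup, since $W$ is a codimension-zero submanifold-with-boundary sitting inside $\widehat X$; this is handled by replacing $Z$ and $X$ with small open thickenings in $\widehat X$ whose intersection deformation retracts onto $W$. Everything else is a formal consequence of exactness plus the vanishing of $H_2$ for rational homology spheres and for the rational homology ball $Z_2$.
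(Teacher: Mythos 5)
Your proof is correct, and it takes a slightly different (though closely related) route from the paper's. The paper works with the long exact sequences of the pairs $(\widehat X, W)$, $(\widehat X, Z)$, $(\widehat X, X)$ together with excision: a class $\beta\in H_2(\widehat X)$ lies in the image of $H_2(Z)$ iff it dies in $H_2(\widehat X, Z)\cong H_2(W_{4q},\partial W_{4q})$, and similarly for $X$ and $W$; since $H_2(\widehat X, W)$ excises to the direct sum $H_2(Z_2,\partial Z_2)\oplus H_2(W_{4q},\partial W_{4q})$, the criterion for lying in the image of $H_2(W)$ is visibly the conjunction of the other two, and the lemma follows without any separate injectivity discussion. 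You instead invoke Mayer--Vietoris for the decomposition $\widehat X = Z\cup X$ with $Z\cap X = W$, which requires you to first establish that the four inclusion-induced maps on $H_2$ are injective (so that the lifts $a$ and $b$ are unique and the subgroup picture is unambiguous); you do this cleanly via the rational-homology-sphere and rational-homology-ball vanishing, which is a worthwhile check that the paper leaves implicit in the statement. The two arguments are of course cousins -- Mayer--Vietoris and excision are interchangeable tools here -- but the paper's packaging avoids the injectivity step while yours makes the subgroup identifications explicit. Both are valid.
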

\begin{proof}
By the exact sequence $H_2(Z)\to H_2(\widehat X)\to H_2(\widehat X,Z)$, an element $\beta\in H_2(\widehat X)$ is contained in the image of $H_2(Z)$ if and only if the image of $\beta$ in $H_2(\widehat X,Z)\cong H_2(W_{4q}(K),\partial W_{4q}(K))$ is zero. Similarly, $\beta$ is contained in the image of $H_2(X)$ if and only if the image of $\beta$ in $H_2(\widehat X,X)\cong H_2(Z_2,\partial Z_2)$ is zero, and $\beta$ is contained in the image of $H_2(W)$ if and only if the image of $\beta$ in $H_2(\widehat X,W)\cong H_2(Z_2,\partial Z_2)\oplus H_2(W_{4q}(K),\partial W_{4q}(K))$ is zero. Our conclusion follows easily.
\end{proof}

The last piece of data we need is the class $[\widehat F] \in H_2(-W_{4q}(K)) \subset H_2(X)$, where $\widehat F$ is obtained by smoothly gluing the core of the handle attachment to a copy of a minimal genus Seifert surface $F$ for $K$; its homology class generates the second homology. Note that $H_2(-W_{4q}(K))$ is orthogonal to all of $H_2(W)$ and satisfies $\braket{[\widehat F]}{[\widehat F]} = -4q$ since $-W_{4q}(K)$ is negative definite. Let $$\phi: \Z/4q\Z \to \Spin^\C(P(p,q))$$ be the correspondence with $\phi(i)$ equal $\mathfrak s_0|_{P(p,q)}$ for $\mathfrak s_0$ any $\Spin^\C$ structure on $-W_{4q}(K)$ satisfying
\begin{equation*}
\braket{c_1(\mathfrak s_0)}{[\widehat F]} \equiv -4q + 2i \pmod{8q}.
\end{equation*}

\begin{prop}\label{protochangemaker}
There is an extension $\mathfrak r\in \Spin^\C(X)$ of $\phi(i)$ over $X$ with $c_1(\mathfrak r)$ a short characteristic covector of $D_4 \oplus \Z^{n-2}$ if any only if $g(K) \le i \le 4q - g(K)$.
\end{prop}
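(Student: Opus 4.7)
The plan is to translate the short-characteristic condition into a comparison of $d$-invariants, controlled by the Ozsv\'ath--Szab\'o surgery formula for $4q$-surgery on $K$. Since $P(p,q)$ is a rational homology sphere, Mayer--Vietoris over $\Q$ yields $H^{2}(X;\Q)\cong H^{2}(W;\Q)\oplus H^{2}(-W_{4q}(K);\Q)$ with the intersection form splitting as a direct sum (the cross cup products vanish because they are supported on the 3-dimensional $P(p,q)$). Thus for any $\mathfrak r\in\Spin^\C(X)$ extending $\phi(i)$, writing $\mathfrak s_{0}:=\mathfrak r|_{-W_{4q}(K)}$,
\[
c_{1}(\mathfrak r)^{2}+b_{2}(X)\;=\;4\,\mathrm{gr}(W,\mathfrak r|_{W})\;+\;4\,\mathrm{gr}(-W_{4q}(K),\mathfrak s_{0}).
\]
By the sharpness of $X$ (Corollary~\ref{sharp}) together with $d(-P(2,1),\cdot)=-d(P(2,1),\cdot)$, the covector $c_{1}(\mathfrak r)$ is short characteristic of $D_{4}\oplus\Z^{n-2}$ if and only if the left-hand side equals $-4\,d(P(2,1),\mathfrak r|_{P(2,1)})$, which rearranges to
\[
\mathrm{gr}(W,\mathfrak r|_{W})+d(P(2,1),\mathfrak r|_{P(2,1)})\;=\;-\mathrm{gr}(-W_{4q}(K),\mathfrak s_{0}).\tag{$\star$}
\]
The left side is always bounded above by $d(P(p,q),\phi(i))$ since $W$ is negative definite, and by sharpness of $W$ (Proposition~\ref{prop:SharpCobordism}) this bound is attained by some extension of $\phi(i)$ over $W$.

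Next I parameterize extensions over $-W_{4q}(K)$: the condition $\mathfrak s_{0}|_{P(p,q)}=\phi(i)$ forces $\langle c_{1}(\mathfrak s_{0}),[\widehat F]\rangle=-4q+2i+8qk$ for some $k\in\Z$, and since $[\widehat F]^{2}=-4q$,
\[
-\mathrm{gr}(-W_{4q}(K),\mathfrak s_{0})\;=\;\frac{(2i-4q+8qk)^{2}}{16q}-\frac{1}{4}.
\]
For $0\le i\le 4q$, the choice $k=0$ uniquely minimizes this and produces $-\mathrm{gr}=d(L(4q,1),i)$ by~\eqref{eq:pCorr}. For $k\ne 0$ one has $(2i-4q+8qk)^{2}\ge 16q^{2}$, hence $-\mathrm{gr}\ge q-\tfrac{1}{4}$; on the other hand, $d(P(p,q),\phi(i))\le d(L(4q,1),i)\le q-\tfrac{1}{4}$, and the boundary cases $i=0,4q$ actually fail strict equality since $K$ is nontrivial (forced by $P(p,q)$ being a prism manifold) implies $t_{0}(K)\ge 1$, dragging $d(P(p,q),\phi(i))$ below $q-\tfrac{1}{4}$ via the surgery formula. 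Consequently every $k\ne 0$ violates the upper bound on the left side of $(\star)$ and is ruled out.

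For $k=0$, equation $(\star)$ asks that the maximum value $d(P(p,q),\phi(i))$ of its left-hand side coincide with $d(L(4q,1),i)$. The Ozsv\'ath--Szab\'o surgery formula~\eqref{eq:nSurgCorr} gives $d(P(p,q),\phi(i))=d(L(4q,1),i)-2t_{\min(i,4q-i)}(K)$, so the desired equality is equivalent to $t_{\min(i,4q-i)}(K)=0$, which by~\eqref{eq:ti=0} holds precisely when $\min(i,4q-i)\ge g(K)$, that is $g(K)\le i\le 4q-g(K)$. In that range, gluing the $k=0$ choice to a sharp extension of $\phi(i)$ over $W$ produces an $\mathfrak r$ satisfying $(\star)$, hence with $c_{1}(\mathfrak r)$ short characteristic; outside this range, $(\star)$ fails for every $(\mathfrak r|_W,\mathfrak s_0)$. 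The main technical hurdle is bookkeeping the sign and orientation conventions between the negative-definite intersection form on $X$ and the positive-definite lattice $D_{4}\oplus\Z^{n-2}$, and verifying that the splitting of $c_{1}^{2}$ is clean enough across the gluing along $P(p,q)$ to make $(\star)$ a cleanly separated comparison on the two sides.
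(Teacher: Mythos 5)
Your proof is correct and follows essentially the same strategy as the paper: decompose $c_1(\mathfrak r)^2 + b_2(X)$ across $W$ and $-W_{4q}(K)$, invoke sharpness of $X$ (Corollary~\ref{sharp}) to convert ``short'' into an equality of $d$-invariants, invoke sharpness of $W$ (Proposition~\ref{prop:SharpCobordism}) to produce the extension over $W$, and use~\eqref{eq:pCorr}--\eqref{eq:nSurgCorr} plus~\eqref{eq:ti=0} to identify the remaining equality condition with $g(K)\le i\le 4q-g(K)$. The only organizational difference is that you spell out the exclusion of the ``wrap-around'' extensions over $-W_{4q}(K)$ (your $k\ne 0$ case) as an explicit lemma, estimating $-\mathrm{gr}\ge q-\tfrac14$ versus $d(P(p,q),\phi(i))< q-\tfrac14$ and invoking $t_0(K)\ge 1$ since $K$ is nontrivial; the paper absorbs this into the phrasing of the equality condition for its analogue of your $(\star)$ by requiring $\braket{c_1(\mathfrak s_0)}{[\widehat F]} = -4q+2i$ exactly (not merely mod $8q$) with $i$ in the stated range, which already forces $\braket{c_1(\mathfrak s_0)}{[\widehat F]}\in[-4q,4q]$. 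Both treatments are valid, and your version has the small advantage of making the unboundedness of the orbit of $\Spin^\C$ structures over $-W_{4q}(K)$ visibly harmless; it has the small cost of needing the additional observation that $K$ is not the unknot.
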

\begin{proof}
Since $X$ has boundary $-P(2,1)$ and $b_2(X) = n+2$, we have that for any $\mathfrak r \in \Spin^\C(X)$,
\begin{equation}\label{overallbound}
d(-P(2,1),\mathfrak r|_{P(2,1)}) \ge \frac{(c_1(\mathfrak r))^2 + (n+2)}{4},
\end{equation}
and since $X$ is sharp this is an equality if and only if $c_1(\mathfrak r)$ is a short characteristic covector of $-H_2(X) = D_4 \oplus \Z^{n-2}$. Similarly, for any $\mathfrak s_1 \in \Spin^\C(W)$,
\begin{equation}\label{bottombound}
d(P(p,q),\mathfrak s_1|_{P(p,q)}) \ge d(P(2,1),\mathfrak s_1|_{P(2,1)}) + \frac{(c_1(\mathfrak s_1))^2 + (n+1)}{4}
\end{equation}
and since $W$ is sharp as a cobordism, for each $\mathfrak t \in \Spin^\C(P(p,q))$ there is some $\mathfrak s_1 \in \Spin^\C(W)$ such that this is an equality and $\mathfrak s_1|_{P(p,q)} = \mathfrak t$. 

For $\mathfrak s_0 \in \Spin^\C(-W_{4q}(K))$ with $$\braket{c_1(\mathfrak s_0)}{[\widehat F]} = -4q + 2i$$ (so that in particular $\phi(i) = \mathfrak s_0|_{P(p,q)}$), we have
\[
(c_1(\mathfrak s_0))^2=-\frac{(-4q+2i)^2}{4q}.
\]
Using (\ref{eq:pCorr}) and (\ref{eq:nSurgCorr}), we have
\begin{equation*}
d(P(p,q), \mathfrak s_0|_{P(p,q)}) = \frac{-(c_1(\mathfrak s_0))^2 - 1}{4} - 2t_{\min\{i,4q-i\}}(K).
\end{equation*}
Since $t_i(K) \ge 0$ and (\ref{eq:ti=0}),
\begin{equation}\label{topbound}
d(P(p,q), \mathfrak s_0|_{P(p,q)}) \le \frac{-(c_1(\mathfrak s_0))^2 - 1}{4}
\end{equation}
with equality if and only if $\braket{c_1(\mathfrak s_0)}{[\widehat F]} = -4q + 2i$ for some $i$ with $g(K) \le i \le 4q - g(K)$. Note that inequality~\eqref{overallbound} is the difference of inequalities~\eqref{topbound} and~\eqref{bottombound} if $\mathfrak s_0|_{P(p,q)}=\mathfrak s_1|_{P(p,q)}$. If $g(K) \le i \le 4q - g(K)$, then there is some extension $\mathfrak s_0$ of $\phi(i)$ over $-W_{4q}(K)$ that achieves equality in~\eqref{topbound}, and there is always some extension $s_1$ of $\phi(i)$ over $W$ achieving equality in~\eqref{bottombound}. These two $\Spin^\C$ structures glue to a $\Spin^\C$ structure $\mathfrak r$ on $X = W \cup (-W_{4q}(K))$ that will achieve equality in~\eqref{overallbound}, so $c_1(\mathfrak r)$ is short and $\mathfrak r|_{P(p,q)} = \phi(i)$.

Conversely, if $\mathfrak r \in \Spin^\C(X)$ has $c_1(\mathfrak r)$ short, then $\mathfrak r$ achieves equality in~\eqref{overallbound}, so $\mathfrak s_0 = \mathfrak r|_{-W_{4q}(K)}$ and $\mathfrak s_1 = \mathfrak r|_{W}$ will achieve equality in~\eqref{bottombound}~and~\eqref{topbound}, respectively. Therefore, $\mathfrak s_0|_{P(p,q)} = \mathfrak r|_{P(p,q)}$ will equal $\phi(i)$ for some $g(K) \le i \le 4q - g(K)$.
\end{proof}


Putting all of these together, we have a Euclidean lattice $\Z^{n+2} = -H_2(\widehat X)$, with a corank--$1$, linear sublattice $$-H_2(W) \cong \Lambda(q,-p) = \langle x_0,\dots,x_n \rangle$$ and a sublattice $D_4 \oplus \Z^{n-2} = -H_2(X)$ such that
\begin{equation}\label{eq:Intersect}
\langle 2x_0,\dots,x_n \rangle = \langle x_0,\dots,x_n \rangle\cap (D_4 \oplus \Z^{n-2}).
\end{equation}

Since $\Lambda(q,-p)$ has discriminant $q$ and corank $1$ and is embedded primitively in $\Z^{n+2}$ (this follows from the long exact sequence of the pair $(X \cup Z_0, W \cup Z_0)$), the orthogonal complement of $\Lambda(q,-p)$ has discriminant $q$ and rank $1$, so is generated by a vector $\sigma$ with $\braket{\sigma}{\sigma} = q$.
Since $|\braket{[\widehat F]}{[\widehat F]}| = 4q$ and $[\widehat F]$ is contained in the orthogonal complement of $\Lambda(q,-p)$, we must have $[\widehat F] = 2\sigma$. Therefore, Proposition~\ref{protochangemaker} gives the following:

\begin{prop}\label{changemaker}
If $P(p,q)$ is the result of $4q$ surgery on some knot $K \subset S^3$ and $q<p < 2q$, then there is an embedding of $\Lambda(q,-p)$ into $\Z^{n+2}$ as the orthogonal complement of a vector $\sigma$ and an embedding $D_4 \oplus \Z^{n-2} \into \Z^{n+2}$ such that there exists some short characteristic covector $\chi$ for $D_4 \oplus \Z^{n-2}$ with $\braket{\chi}{\sigma} = i$ if and only if $-2q + g(K) \le i \le 2q-g(K)$.
\end{prop}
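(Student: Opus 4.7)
The plan is to assemble Proposition~\ref{changemaker} as a direct consequence of Proposition~\ref{protochangemaker} combined with the lattice identifications built up just before it. The two embeddings required in the statement are already supplied: $\Lambda(q,-p)\hookrightarrow\Z^{n+2}$ as the orthogonal complement $\sigma^\perp$ comes from the primitive inclusion $-H_2(W)\hookrightarrow -H_2(\widehat X)$, while $D_4\oplus\Z^{n-2}\cong -H_2(X)$ embeds in $\Z^{n+2}=-H_2(\widehat X)$ via $H_2(X)\to H_2(\widehat X)$. So the only content left is to translate Proposition~\ref{protochangemaker} into the stated condition on $\braket{\chi}{\sigma}$.

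First I would invoke the standard bijection between $\Spin^\C(X)$ and the set of characteristic covectors of $-H_2(X)=D_4\oplus\Z^{n-2}$ given by $\mathfrak r\mapsto c_1(\mathfrak r)$; this holds since $X$ is simply connected and $H^2(X)$ is torsion free. Under this bijection, Proposition~\ref{protochangemaker} identifies the short characteristic covectors as precisely those $c_1(\mathfrak r)$ for which $\mathfrak r$ extends some $\phi(i_0)$ with $g(K)\le i_0\le 4q-g(K)$.

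Next I would compute $\braket{c_1(\mathfrak r)}{\sigma}$ using the relation $[\widehat F]=2\sigma$. Since $[\widehat F]\in H_2(-W_{4q}(K))$ is orthogonal to all of $H_2(W)$, the pairing $\braket{c_1(\mathfrak r)}{[\widehat F]}$ depends only on the restriction $\mathfrak s_0:=\mathfrak r|_{-W_{4q}(K)}$, and for a short $c_1(\mathfrak r)$ the proof of Proposition~\ref{protochangemaker} shows that this pairing equals exactly $-4q+2i_0$ (not merely modulo $8q$, because equality holds in~\eqref{topbound}). Therefore
\[
\braket{c_1(\mathfrak r)}{\sigma}=\tfrac{1}{2}\braket{c_1(\mathfrak r)}{[\widehat F]}=-2q+i_0.
\]
Setting $i=-2q+i_0$, the range $g(K)\le i_0\le 4q-g(K)$ transforms into $-2q+g(K)\le i\le 2q-g(K)$, and the biconditional in Proposition~\ref{changemaker} follows immediately from the one in Proposition~\ref{protochangemaker}.

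The main subtlety --- rather than a serious obstacle --- is bookkeeping: tracking the shift $i=i_0-2q$, and verifying that the pairing $\braket{\chi}{\sigma}$ is well defined and integer valued even though $\chi\in(D_4\oplus\Z^{n-2})^*$ while $\sigma\in\Z^{n+2}$. Well-definedness follows because $2\sigma=[\widehat F]\in D_4\oplus\Z^{n-2}$, so $\sigma\in(D_4\oplus\Z^{n-2})\otimes\Q$; integrality follows from the characteristic condition applied to $[\widehat F]$, whose self-pairing $4q$ forces $\chi([\widehat F])$ to be even. All the genuinely analytic work --- sharpness of $W$ (Proposition~\ref{prop:SharpCobordism}), the identification $-H_2(X)\cong D_4\oplus\Z^{n-2}$ (via Proposition~\ref{D4} and Corollary~\ref{sharp}), and the matching of sharpness of $c_1(\mathfrak r)$ to the genus bound on $i_0$ (Proposition~\ref{protochangemaker}) --- has been carried out upstream.
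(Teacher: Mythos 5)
Your proposal is correct and follows essentially the same approach as the paper: it packages the lattice identifications built up in the surrounding discussion, observes $[\widehat F]=2\sigma$, and translates Proposition~\ref{protochangemaker} into the stated condition via the bookkeeping $\braket{\chi}{\sigma}=\tfrac12\braket{c_1(\mathfrak r)}{[\widehat F]}$ (up to a sign which is immaterial since the target range is symmetric about $0$). The one point you flag about the pairing being exactly $-4q+2i_0$ rather than merely mod $8q$ is the right thing to check, and your justification via equality in~\eqref{topbound} is precisely what makes the translation go through; the integrality worry at the end is moot since the short covectors are represented by integer vectors in $\Z^{n+2}$.
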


Pushing the logic of Proposition~\ref{protochangemaker} a little further, the Alexander polynomial of $K$ can be recovered from $\sigma$:
\begin{prop}\label{prop:AlexanderComputation}
For $0 \le i \le 2q$, the torsion coefficient $t_i(K)$ satisfies
\begin{equation*}
t_i(K) = \min_{\substack{\chi \in \operatorname{Char}(D_4 \oplus \Z^{n-2}) \\ \braket{\chi}{\sigma} = 2q-i}} \left\lceil \frac{\braket{\chi}{\chi} - n - 2}{8} \right\rceil.
\end{equation*}
\end{prop}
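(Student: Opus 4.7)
The plan is to combine the $d$--invariant surgery formula (equations~\eqref{eq:pCorr}~and~\eqref{eq:nSurgCorr}) with the sharp cobordism property of $W$ (Proposition~\ref{prop:SharpCobordism}) and the sharpness of $X$ (Corollary~\ref{sharp}). Fix $i$ with $0 \le i \le 2q$, and let $\mathfrak{s}_0 \in \Spin^\C(-W_{4q}(K))$ be the unique $\Spin^\C$ structure with $\braket{c_1(\mathfrak{s}_0)}{[\widehat F]} = -4q + 2i$, which restricts to $\phi(i)$ on $P(p,q)$; then $c_1(\mathfrak{s}_0)^2 = -(4q-2i)^2/(4q)$, and the surgery formula gives
\begin{equation*}
2t_i(K) = \frac{-c_1(\mathfrak{s}_0)^2 - 1}{4} - d(P(p,q),\phi(i)).
\end{equation*}
Substituting the sharp-cobordism description
\[
d(P(p,q),\phi(i)) = \max_{\mathfrak{s}_1 \in \Spin^\C(W),\ \mathfrak{s}_1|_{P(p,q)} = \phi(i)} \Bigl(d(P(2,1),\mathfrak{s}_1|_{P(2,1)}) + \frac{c_1(\mathfrak{s}_1)^2 + n+1}{4}\Bigr)
\]
then yields
\begin{equation*}
2t_i(K) = \min_{\mathfrak{s}_1} \left[\frac{-c_1(\mathfrak{s}_0)^2 - c_1(\mathfrak{s}_1)^2 - n - 2}{4} - d(P(2,1),\mathfrak{s}_1|_{P(2,1)})\right].
\end{equation*}

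For each admissible $\mathfrak{s}_1$, gluing $\mathfrak{s}_0$ and $\mathfrak{s}_1$ along $P(p,q)$ produces $\mathfrak{r} \in \Spin^\C(X)$, and the Euclidean identification $H_2(\widehat X) \cong \Z^{n+2}$ converts $c_1(\mathfrak{r})$ into a characteristic covector $\chi \in \Ch(D_4 \oplus \Z^{n-2})$ satisfying $\braket{\chi}{\chi} = -(c_1(\mathfrak{s}_0)^2 + c_1(\mathfrak{s}_1)^2)$. Using $[\widehat F] = 2\sigma$, the condition $\braket{c_1(\mathfrak{s}_0)}{[\widehat F]} = -4q + 2i$ translates into $\braket{\chi}{\sigma} = 2q - i$. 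The map $\mathfrak{s}_1 \mapsto \chi$ gives a bijection between $\Spin^\C$ structures on $W$ extending $\phi(i)$ and characteristic covectors of $L := D_4 \oplus \Z^{n-2}$ with $\braket{\chi}{\sigma} = 2q - i$. Sharpness of $X$ identifies $d(P(2,1),\mathfrak{s}_1|_{P(2,1)}) = d_L([\chi])$, so with $\chi^*$ a short representative of $[\chi]$ the formula for $2t_i(K)$ reduces to
\begin{equation*}
2t_i(K) = \min_{\substack{\chi \in \Ch(L)\\ \braket{\chi}{\sigma} = 2q - i}} \frac{\braket{\chi}{\chi} - \braket{\chi^*}{\chi^*}}{4}.
\end{equation*}

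It remains to check that $(\braket{\chi}{\chi} - \braket{\chi^*}{\chi^*})/8 = \lceil (\braket{\chi}{\chi} - n - 2)/8 \rceil$ for every $\chi \in \Ch(L)$. Writing $\chi = \chi^* + 2v$ with $v \in L$ and using that $\chi^*$ is characteristic gives $\braket{\chi}{\chi} \equiv \braket{\chi^*}{\chi^*} \pmod 8$, so the left-hand side is a nonnegative integer. By Lemma~\ref{dinvDk}, $\braket{\chi^*}{\chi^*}$ takes the value $n-2$ on exactly one of the four discriminant classes of $L$ and $n+2$ on the other three. When $\braket{\chi^*}{\chi^*} = n+2$, the quantity $(\braket{\chi}{\chi} - n - 2)/8$ is already a nonnegative integer equal to the left-hand side; when $\braket{\chi^*}{\chi^*} = n-2$, it is a half-integer whose ceiling equals $(\braket{\chi}{\chi} - n + 2)/8$, again matching. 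Dividing the displayed minimum by $2$ completes the proof. The only real obstacle is this parity bookkeeping at the end, forced by the fact that $D_4$ admits a characteristic covector of norm strictly below its rank; the analytic content is immediate from the sharp-cobordism and sharpness inputs.
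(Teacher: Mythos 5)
Your proof is correct and takes essentially the same route as the paper's: both start from the surgery formula for $d(P(p,q),\phi(i))$, substitute the sharp--cobordism expression coming from Proposition~\ref{prop:SharpCobordism}, translate everything into characteristic covectors of $D_4 \oplus \Z^{n-2}$ via the gluing $X = W \cup (-W_{4q})$, and finish with a parity/ceiling check. The one small difference is in how that last step is discharged: you invoke sharpness of $X$ to identify $d(P(2,1),\cdot)$ with $d_L([\chi])$ and then verify the ceiling identity as a purely lattice-theoretic fact using Lemma~\ref{dinvDk}, whereas the paper takes the shortcut of observing that $t_i(K)\in\Z$ and $d(P(2,1),\cdot)\in\{0,-1\}$ (Lemma~\ref{lem:CorrPa1}) already force the $\min$ to equal the $\min$ of ceilings. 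Your version makes the mechanism behind the ceiling explicit, at the cost of using a slightly heavier input; the paper's is more economical but leaves the parity bookkeeping implicit.
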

\begin{proof}
Since $[\widehat{F}] = 2\sigma$ and the intersection lattice on $X$ is $D_4 \oplus \Z^{n-2}$, any characteristic covector $\chi$ for $D_4 \oplus \Z^{n-2}$ with $\braket{\chi}{\sigma}  = 2q-i$ is the first Chern class of a $\Spin^\C$ structure $\mathfrak{r}$ on $X$ with 
\begin{equation}\label{eq:rOnF}
\braket{c_1(\mathfrak{r})}{[\widehat{F}]} = -4q + 2i.
\end{equation}
 (Note that we need to change the sign of the inner product.) Then, exactly as in the proof of Proposition~\ref{protochangemaker}, the restriction of $\mathfrak{r}$ to $-W_{4q}=-W_{4q}(K)$ satisfies
\begin{equation}\label{torsionterm}
d(P(p,q), \mathfrak{r}|_{P(p,q)}) = \frac{-(c_1(\mathfrak{r}|_{-W_{4q}}))^2 - 1}{4} - 2t_i(K).
\end{equation}
Let $\mathfrak s_1$ be the restriction of $\mathfrak{r}$ to $W$, then $\mathfrak s_1$ satisfies
\begin{equation}\label{topbound2}
d(P(p,q), \mathfrak{s}_1|_{P(p,q)}) \ge d(P(2,1),\mathfrak{s}_1|_{P(2,1)}) + \frac{(c_1(\mathfrak{s}_1))^2 + (n+1)}{4}
\end{equation}
Combining~\eqref{torsionterm}~and~\eqref{topbound2} together,
\begin{equation}\label{torsionbound}
t_i(K) \le \frac{-(c_1(\mathfrak{r}))^2 - (n + 2)}{8} - \frac{d(P(2,1),\mathfrak{r}|_{P(2,1)})}{2}.
\end{equation}

Using Proposition~\ref{prop:SharpCobordism}, some $\mathfrak{s}_1\in\Spin^\C(W)$ achieves equality in~\eqref{topbound2} with $\mathfrak{s}_1|_{P(p,q)}=\varphi(i)$. Let $\mathfrak{r}\in\Spin^\C(X)$ be the extension of $\mathfrak s_1$ with (\ref{eq:rOnF}), then $\mathfrak{r}$ achieves equality in~\eqref{torsionbound}. Therefore,
\begin{equation}
t_i(K) = \min_{\substack{\mathfrak{r} \in \Spin^\C(X) \\ \braket{c_1(\mathfrak{r})}{[\widehat{F}]} = -4q + 2i}} \frac{-(c_1(\mathfrak{r}))^2 - (n + 2)}{8} - \frac{d(P(2,1),\mathfrak{r}|_{P(2,1)})}{2}
\end{equation}
Since $t_i(K)$ is an integer and $d(P(2,1),\mathfrak{r}|_{P(2,1)})$ will always be either $0$ or $-1$, we get
\begin{equation}
t_i(K) = \min_{\substack{\mathfrak{r} \in \Spin^\C(X) \\ \braket{c_1(\mathfrak{r})}{[\widehat{F}]} = -4q + 2i}} \left\lceil \frac{-(c_1(\mathfrak{r}))^2 - (n + 2)}{8} \right\rceil.
\end{equation}

Finally, $\Spin^\C$ structures $\mathfrak{r}$ on $X$ with (\ref{eq:rOnF}) correspond (under the first Chern class and a change in the sign of the inner product) with characteristic covectors $\chi$ of $D_4 \oplus \Z^{n-2}$ with $\braket{\chi}{\sigma} = 2q - i$, and $-(c_1(\mathfrak{r}))^2 = \braket{\chi}{\chi}$, so the desired formula follows.
\end{proof}


By Proposition~\ref{D4},
specifying a sublattice $D_4 \oplus \Z^{n-2} \subset \Z^{n+2}$ is equivalent to choosing $4$ indicies $a > b > c > d$ such that for $v \in \Z^{n+2}$, $v \in D_4 \oplus \Z^{n-2}$ if and only if $\braket{v}{e_a + e_b + e_c + e_d}$ is even. The characteristic covectors for $D_4 \oplus \Z^{n-2}$ come in two types: those that are the restrictions of characteristic covectors of $\Z^{n+2}$, which can be represented by elements of $\Z^{n+2}$ with all entries odd, and those that are not, which can be represented by elements of $\Z^{n+2}$ with the entries in positions $a,b,c,$ and $d$ even and all other entries odd. Call these two types of covectors \textit{even} and \textit{odd}, respectively. The short characteristic covectors are exactly the ones with all odd entries equal to $\pm 1$, and the even entries (if any) equal to $\pm 2,0,0,$ and $0$ in some order.

As in \cite{greene:LSRP}, we will assume $\sigma=(\sigma_0,\sigma_1,\dots,\sigma_{n+1})$ with
\[
0\le \sigma_0\le\sigma_1\le\dots\le\sigma_{n+1}.
\]
Moreover, we can assume that for any two indices $i,j\in\{0,1,\dots,n+1\}$, we always have
\begin{equation}\label{eq:abcdMAX}
i>j, \qquad\text{if }\sigma_i=\sigma_j, i\in\{a,b,c,d\}, \text{and } j\notin\{a,b,c,d\}.
\end{equation}

\begin{definition}
Let $\Sh(D_4 \oplus \Z^{n-2}) = \Sh_0 \cup \Sh_1$, with $\Sh_0 = \Sh(\Z^n)$ the set of even short characteristic covectors and $\Sh_1 = \Sh(D_4 \oplus \Z^{n-2}) - \Sh_0$ the set of odd characteristic covectors. Let $$\chi^0 = -\sum_{i = 0}^{n+3} e_i$$ and $$\chi^1 = -2e_a - \sum_{i \not \in \{a,b,c,d\}} e_i$$ be the elements of $\Sh_0$ and $\Sh_1$, respectively, minimizing $\braket{\chi}{\sigma}$. Let
$$\mathcal{T}_0 = \left\{\left.\frac12 (\chi - \chi^0)\right|\chi \in \Sh_0\right\}$$
and $$\mathcal{T}_1 = \left\{\left.\frac12 (\chi - \chi^1)\right|\chi \in \Sh_1\right\}$$ be called the sets of even and odd test vectors, respectively.
\end{definition}

For $\chi \in \mathbb Z^{n+2}$, let $\chi_i$ denote the component of $\chi$ corresponding to the index $i$.
The following result is easy to see.

{\prop\label{prop:Sh1}For $\chi\in \mathcal{T}_1$, $(\chi_d, \chi_c,\chi_b,\chi_a)=(\pm 1, 0, 0, 1)$ or $(0, \pm 1, 0, 1)$ or $(0, 0, \pm 1, 1)$ or $(0,0,0,2)$ or $(0,0,0,0)$.}

\begin{prop}\label{prop:T0T1}
The sets $\{\braket{\chi}{\sigma}~|~\chi \in \mathcal{T}_0\}$ and $\{\braket{\chi}{\sigma}~|~\chi \in \mathcal{T}_1\}$ are both intervals of integers beginning at $0$. Also,
\begin{equation}\label{eq:T0T1}
\sum_{i=0}^{n+1}\sigma_i=\max\{\braket{\chi}{\sigma}~|~\chi \in \mathcal{T}_0\}=\max\{\braket{\chi}{\sigma}~|~\chi \in \mathcal{T}_1\}\pm 1.
\end{equation}
 \end{prop}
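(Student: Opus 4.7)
Set $A_j = \{\braket{\chi}{\sigma} : \chi \in \Sh_j\}$ for $j=0,1$, so that $\{\braket{\chi}{\sigma} : \chi \in \mathcal{T}_j\} = \tfrac{1}{2}(A_j - \braket{\chi^j}{\sigma})$. The plan is to compute $A_0$ directly from the changemaker property of $\sigma$, and then use Theorem~\ref{thm:restriction}(b) together with a parity dichotomy to pin down $A_1$.

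Both $\Sh_0$ and $\Sh_1$ are closed under $\chi \mapsto -\chi$, so each $A_j$ is symmetric about $0$; modifying $\chi$ within $\Sh_j$ changes $\braket{\chi}{\sigma}$ by an even integer, so each $A_j$ lies in a single parity class, and the two classes differ by $\sigma_a + \sigma_b + \sigma_c + \sigma_d \pmod 2$. Since $\sigma_i \ge 0$ and $\sigma_a \ge \sigma_b \ge \sigma_c \ge \sigma_d$ by \eqref{eq:abcdMAX}, direct inspection gives $\min A_0 = \braket{\chi^0}{\sigma} = -\sum_i \sigma_i$ and $\min A_1 = \braket{\chi^1}{\sigma} = -2\sigma_a - \sum_{i \notin \{a,b,c,d\}} \sigma_i$.

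For the $\mathcal{T}_0$ statement, write $\chi \in \Sh_0$ as $\chi_i = \epsilon_i \in \{\pm 1\}$; then $\braket{\chi}{\sigma} = -\sum_i \sigma_i + 2 \sum_{\epsilon_i = 1} \sigma_i$, and the changemaker property realizes every subset sum in $\{0, 1, \ldots, \sum_i \sigma_i\}$. Hence $A_0$ is the full arithmetic progression $\{-\sum_i \sigma_i,\, -\sum_i \sigma_i + 2,\, \ldots,\, \sum_i \sigma_i\}$ and $\{\braket{\chi}{\sigma} : \chi \in \mathcal{T}_0\} = \{0, 1, \ldots, \sum_i \sigma_i\}$, giving $M_0 = \sum_{i=0}^{n+1} \sigma_i$.

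For $\mathcal{T}_1$, invoke Theorem~\ref{thm:restriction}(b) to conclude $A_0 \cup A_1 = [-2q + g(K),\, 2q - g(K)]$, a full integer interval containing both parities. Since each $A_j$ sits in a single parity class, the two classes must be opposite, and each $A_j$ consists of \emph{all} integers of its parity in the interval. In particular $A_1$ is an arithmetic progression with common difference $2$, so $\{\braket{\chi}{\sigma} : \chi \in \mathcal{T}_1\} = \{0, 1, \ldots, M_1\}$ with $M_1 = \max A_1 = -\min A_1 = 2\sigma_a + \sum_{i \notin \{a,b,c,d\}} \sigma_i = \sum_i \sigma_i + (\sigma_a - \sigma_b - \sigma_c - \sigma_d)$. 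Finally, $\max A_0$ and $\max A_1$ have opposite parities, so differ by an odd integer, while gaplessness of $A_0 \cup A_1$ forces $|\max A_1 - \max A_0| \le 1$. Combining these gives $|\sigma_a - \sigma_b - \sigma_c - \sigma_d| = 1$, hence $M_1 = \sum_i \sigma_i \pm 1$.

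The main obstacle is that the sub-tuple $(\sigma_a, \sigma_b, \sigma_c, \sigma_d)$ need not satisfy any subset-sum property on its own, so $A_1$ cannot be read off directly from the changemaker data. The parity dichotomy is what bridges the gap: it lets the clean description of $A_0$ combine with Theorem~\ref{thm:restriction}(b) to force the complementary parity class, and hence $A_1$, to be a full gapless arithmetic progression.
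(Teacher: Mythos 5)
Your argument follows the same route as the paper's proof: use the fact that $\{\braket{\chi}{\sigma} : \chi \in \Sh(D_4 \oplus \Z^{n-2})\}$ is a full integer interval (which you cite as Theorem~\ref{thm:restriction}(b); the correct reference at this point in the logical order is Proposition~\ref{changemaker}, since Theorem~\ref{thm:restriction} is only proved afterward, though the content is the same), together with the observation that $\Sh_0$ and $\Sh_1$ each map to a single parity class, to conclude that the two parity classes are complementary, that each $A_j$ is a full step-two arithmetic progression, and hence that $\max A_0$ and $\max A_1$ are adjacent integers.

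There is, however, a circularity in how you treat $\mathcal{T}_0$. You compute $A_0$ by appealing to ``the changemaker property'' of $\sigma$, i.e.\ that its subset sums exhaust $\{0,1,\dots,\sum_i\sigma_i\}$. But that fact is precisely Corollary~\ref{cor:changemaker}, whose one-line proof is \emph{deduced from} Proposition~\ref{prop:T0T1} (the very statement you are proving). At this stage of the paper $\sigma$ is not yet known to be a changemaker; that is exactly what the interval structure of $\mathcal{T}_0$ will establish. The fix is to drop the appeal to the changemaker property entirely: the parity argument you already run for $\mathcal{T}_1$ applies symmetrically to $\mathcal{T}_0$, and the identity $M_0 = \max A_0 = -\braket{\chi^0}{\sigma} = \sum_i\sigma_i$ needs only the symmetry $\Sh_0 = -\Sh_0$ and the explicit form of $\chi^0$, not gaplessness of the subset sums. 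With that redundant (and circular) step excised, the rest of your proof is correct and matches the paper's. (Two minor notational quibbles: the inequalities $\sigma_a \ge \sigma_b \ge \sigma_c \ge \sigma_d$ come from $a>b>c>d$ together with $\sigma$ being nondecreasing, not from~\eqref{eq:abcdMAX}; and your final paragraph correctly identifies the parity dichotomy as the crux, but it is the crux for $\mathcal{T}_0$ too, not only for $\mathcal{T}_1$.)
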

\begin{proof}
By Proposition~\ref{changemaker}, the set $\{\braket{\chi}{\sigma}~|~\chi \in \Sh(D_4 \oplus \Z^{n-2})\}$ is an interval of integers. For each $i\in\{0,1\}$, the set $\{\braket{\chi}{\sigma}~|~\chi \in \Sh_i\}$ contains the elements of this interval with the same parity. So the parities are different for $i=0$ and $i=1$. In particular, both sets are arithmetic progressions of step size $2$, so subtracting off the smallest element and dividing by $2$ gives intervals beginning at $0$.
\end{proof}

\begin{cor}\label{cor:changemaker}
$\sigma$ is a changemaker.
\end{cor}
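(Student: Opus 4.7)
The plan is to unpack the definition of the set $\mathcal{T}_0$ of even test vectors, translate the pairing $\braket{\chi}{\sigma}$ into a subset sum of the entries of $\sigma$, and then quote Proposition~\ref{prop:T0T1} directly. In effect, Corollary~\ref{cor:changemaker} is a cosmetic reformulation of the even half of Proposition~\ref{prop:T0T1}, so most of the work will be in confirming the correspondence between elements of $\mathcal{T}_0$ and subsets of $\{0,1,\ldots,n+1\}$.

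First I would unpack $\Sh_0$. By definition, an element of $\Sh_0 = \Sh(\mathbb{Z}^{n+2})$ is a short characteristic covector for $D_4\oplus \mathbb{Z}^{n-2}$ that lifts to a characteristic covector of $\mathbb{Z}^{n+2}$; equivalently, it is an element of $\mathbb{Z}^{n+2}$ with every entry odd and minimal in its class, hence with every entry $\pm 1$. Since $\chi^0 = -\sum_{i=0}^{n+1} e_i$ has all entries equal to $-1$, the test vector $\frac{1}{2}(\chi - \chi^0)$ has each coordinate in $\{0,1\}$ and is the indicator $\sum_{i\in S} e_i$ of the subset $S = \{i : \chi_i = 1\}$. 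Conversely, given any $S\subset\{0,1,\ldots,n+1\}$, the vector $\chi^0 + 2\sum_{i\in S} e_i$ lies in $\Sh_0$, so the map
\[
2^{\{0,1,\ldots,n+1\}} \longrightarrow \mathcal{T}_0,\qquad S\longmapsto \sum_{i\in S} e_i,
\]
is a bijection.

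Second, I would compute the pairing: for $\chi = \sum_{i\in S} e_i \in \mathcal{T}_0$,
\[
\braket{\chi}{\sigma} = \sum_{i\in S}\sigma_i.
\]
Therefore
\[
\{\braket{\chi}{\sigma} : \chi \in \mathcal{T}_0\} = \Bigl\{\sum_{i\in S}\sigma_i : S\subset\{0,1,\ldots,n+1\}\Bigr\}.
\]

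Finally I would invoke Proposition~\ref{prop:T0T1}. That proposition asserts that the left-hand side above is an interval of integers beginning at $0$ whose maximum equals $\sigma_0 + \sigma_1 + \cdots + \sigma_{n+1}$. Combining with the displayed equality, every integer $k$ with $0\le k\le \sum_{i=0}^{n+1}\sigma_i$ is realized as a subset sum $\sum_{i\in S}\sigma_i$. This is exactly Definition~\ref{defn:changemaker}, so $\sigma$ is a changemaker vector. There is no real obstacle beyond the identification of $\mathcal{T}_0$ with subsets, since Proposition~\ref{prop:T0T1} already does the heavy lifting (using Proposition~\ref{changemaker} and the parity separation of $\Sh_0$ and $\Sh_1$).
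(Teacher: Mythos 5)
Your proposal is correct and follows essentially the same approach as the paper: the paper's proof is the one-line observation that $\mathcal{T}_0$ consists of $0/1$ vectors, after which the result follows immediately from Proposition~\ref{prop:T0T1}, and you have simply unpacked that observation into full detail (the bijection $S \mapsto \sum_{i\in S}e_i$, the computation $\braket{\chi}{\sigma}=\sum_{i\in S}\sigma_i$, and the appeal to the interval structure). One small note: you correctly use $\chi^0 = -\sum_{i=0}^{n+1}e_i$, which silently fixes what appears to be a typo in the paper's definition (written there as $\sum_{i=0}^{n+3}$).
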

\begin{proof}
The set $\mathcal{T}_0$ consists of just vectors with all entries $0$ or $1$.
\end{proof}

\begin{proof}[Proof of Theorem~\ref{thm:restriction}]
This follows from the combination of Corollary~\ref{cor:changemaker} and Proposition~\ref{changemaker}.
\end{proof}

{\cor\label{cor:sigma} $\sigma_a=\sigma_b+\sigma_c+\sigma_d+\theta$, where $\theta\in\{-1,1\}$.}
\begin{proof}
Using~\eqref{eq:T0T1}, we see that
\[
\sum_{i=0}^{n+1}\sigma_i = 2e_a+\Big(\sum_{j\not \in \{a, b, c, d\}}\sigma_j\Big) \pm 1.
\]
The result is now immediate.
\end{proof}

\begin{lemma}\label{lem:OddPairing}
An irreducible vector $v\in\sigma^{\perp}$ has an odd pairing with the vector $e_a+e_b+e_c+e_d$ if and only if $[v]$ contains $x_0$.
\end{lemma}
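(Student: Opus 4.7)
The plan is to unpack the definitions: the statement is essentially a parity bookkeeping, and the only structural fact required is Proposition~\ref{prop:IntervalsIrreducible}, together with the characterization of the sublattice $D_4 \oplus \mathbb{Z}^{n-2}$ provided by Equation~\eqref{eq:Intersect}.

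First I would invoke Proposition~\ref{prop:IntervalsIrreducible} to write $v = \epsilon [I]$ for some $\epsilon \in \{\pm 1\}$ and some interval $[I] \subseteq \{x_0, x_1, \dots, x_n\}$. Then
\[
\langle v, e_a + e_b + e_c + e_d \rangle \;=\; \epsilon \sum_{x_i \in [I]} \langle x_i, e_a + e_b + e_c + e_d \rangle,
\]
so it suffices to determine the parity of $\langle x_i, e_a + e_b + e_c + e_d\rangle$ for each $i$.

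Next I would use Equation~\eqref{eq:Intersect}, which states that $\langle 2x_0, x_1, \dots, x_n\rangle = \Lambda(q,-p) \cap (D_4 \oplus \mathbb{Z}^{n-2})$. Recall that $D_4 \oplus \mathbb{Z}^{n-2}$ consists precisely of those vectors in $\mathbb{Z}^{n+2}$ having even inner product with $e_a + e_b + e_c + e_d$. Hence, for each $i \geq 1$, the basis vector $x_i$ lies in $D_4 \oplus \mathbb{Z}^{n-2}$, so $\langle x_i, e_a + e_b + e_c + e_d\rangle$ is even; and since $2x_0$ lies in $D_4 \oplus \mathbb{Z}^{n-2}$ but $x_0$ itself does not (as the intersection with $\Lambda(q,-p)$ is exactly the index-two sublattice $\langle 2x_0, x_1, \dots, x_n\rangle$), the pairing $\langle x_0, e_a + e_b + e_c + e_d\rangle$ must be odd.

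Combining these two observations, the parity of $\langle v, e_a + e_b + e_c + e_d\rangle$ equals the parity of the number of times $x_0$ appears in $\epsilon[I]$, which is $1$ if $x_0 \in [I]$ and $0$ otherwise. This yields the claimed equivalence. There is no real obstacle here; the content of the lemma is simply to phrase the mod-$2$ information recorded by the index-two inclusion in terms of the combinatorics of intervals in the vertex basis, and once the two descriptions of $D_4 \oplus \mathbb{Z}^{n-2}$ are lined up, the conclusion is immediate.
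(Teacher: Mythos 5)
Your proof is correct and follows essentially the same route as the paper: both arguments reduce the parity condition to membership in $D_4 \oplus \Z^{n-2}$, invoke Equation~\eqref{eq:Intersect} to identify that membership (within $\Lambda(q,-p)$) with lying in $\langle 2x_0, x_1, \dots, x_n\rangle$, and then use the interval structure of irreducible vectors to translate that into whether $x_0 \in [v]$. The only cosmetic difference is that you make the parities of the individual pairings $\langle x_i, e_a+e_b+e_c+e_d\rangle$ explicit, whereas the paper phrases the same fact directly as a membership criterion for the sublattice.
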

\begin{proof}
Suppose $v\in\sigma^{\perp}$ is irreducible.
The pairing $\braket{v}{e_a+e_b+e_c+e_d}$ is even if and only if $v\in D_4 \oplus \Z^{n-2}$, which is equivalent to $v\in \langle 2x_0,\dots,x_n \rangle$ by (\ref{eq:Intersect}). Since $v$ is irreducible, $v\notin \langle 2x_0,\dots,x_n \rangle$ if and only if $[v]$ contains $x_0$.
\end{proof}

Let
\begin{equation}\label{eq:Gdef}
G = 1 + \sigma_0 + \sigma_1 + \cdots + \sigma_{d-1}.
\end{equation}

\begin{lemma}\label{lem:Gbound}There exists $\chi\in \T_1$ with $\braket{\chi}{\sigma}=G$.
Let $f$ be the minimal index such that $f>d$ and $f\notin\{a,b,c\}$.
\newline If $\chi_a=0$, then \[G\ge\sigma_f.\]
If  $\chi_a \ne 0$, then
\[G\ge \sigma_a-\sigma_b=\sigma_c+\sigma_d+\theta.\]
\end{lemma}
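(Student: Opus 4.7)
The plan is to prove existence first via Proposition~\ref{prop:T0T1}, and then to case-analyze on $\chi_a$ using the list in Proposition~\ref{prop:Sh1}.

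For existence, Proposition~\ref{prop:T0T1} says $\{\braket{\chi}{\sigma} : \chi \in \mathcal{T}_1\}$ is an interval of integers starting at $0$, so I only need to show that $G$ does not exceed its maximum $M_1$. I would compute $M_1$ directly from Proposition~\ref{prop:Sh1}: the $\{a,b,c,d\}$-contribution is maximized by the tuple $(\chi_d,\chi_c,\chi_b,\chi_a)=(0,0,0,2)$ (which beats every $\sigma_a\pm\sigma_\star$ alternative since $\sigma_a \ge \sigma_b$), and the remaining coordinates are each independently $\{0,1\}$, so $M_1 = 2\sigma_a + \sum_{i\notin\{a,b,c,d\}}\sigma_i$. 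Invoking Corollary~\ref{cor:sigma} rewrites this as $\sum_{i=0}^{n+1}\sigma_i+\theta$. The desired inequality $G \le \sum_i \sigma_i+\theta$ amounts to $\sigma_d+\sigma_{d+1}+\cdots+\sigma_{n+1} \ge 1-\theta$, which is trivial when $\theta=1$. When $\theta=-1$ I would note $\sigma_a\ge 1$ (else $\sigma_b=\sigma_c=\sigma_d=0$ would contradict $\sigma_b+\sigma_c+\sigma_d=\sigma_a+1$ from Corollary~\ref{cor:sigma}), giving $\sigma_a+\sigma_b+\sigma_c+\sigma_d = 2\sigma_a+1 \ge 3$.

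For the bounds, take the $\chi$ produced above. If $\chi_a=0$, Proposition~\ref{prop:Sh1} forces $(\chi_d,\chi_c,\chi_b,\chi_a)=(0,0,0,0)$, so setting $T=\{i:\chi_i=1\}$ one has $T\subseteq\{0,\ldots,n+1\}\setminus\{a,b,c,d\}$ and $G=\sum_{i\in T}\sigma_i$. Since $\sum_{i=0}^{d-1}\sigma_i=G-1<G$, $T$ must contain some index $>d$; by minimality of $f$ every index in $\{d+1,\ldots,f-1\}$ lies in $\{a,b,c\}$, so that extra index is $\ge f$, yielding $G\ge\sigma_f$. If $\chi_a\ne 0$, running through the tuples in Proposition~\ref{prop:Sh1} shows the $\{a,b,c,d\}$-contribution is at least $\sigma_a-\sigma_b$ (using $\sigma_b\ge\sigma_c,\sigma_d$ and $2\sigma_a\ge\sigma_a-\sigma_b$), and the remaining $\chi_i\in\{0,1\}$ contribute nonnegatively. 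Hence $G\ge\sigma_a-\sigma_b$, which equals $\sigma_c+\sigma_d+\theta$ by Corollary~\ref{cor:sigma}.

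I expect the main obstacle to be the existence verification in the $\theta=-1$ case, where the needed bound $\sigma_d+\cdots+\sigma_{n+1}\ge 2$ is essentially tight and forces the careful use of Corollary~\ref{cor:sigma} to exclude $\sigma_a=0$. A small but worth-noting point is that the index $f$ is automatically well-defined in the branch $\chi_a=0$: if $\{d+1,\ldots,n+1\}$ coincided with $\{a,b,c\}$, then $T\subseteq\{0,\ldots,d-1\}$ would give $\sum_{i\in T}\sigma_i\le G-1$, contradicting $\sum_{i\in T}\sigma_i=G$, so this degenerate situation cannot occur under the hypothesis.
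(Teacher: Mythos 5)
Your proof is correct and follows essentially the same route as the paper: existence from Proposition~\ref{prop:T0T1}, then a case split on $\chi_a$ via the list in Proposition~\ref{prop:Sh1}. The one place you go beyond the paper is the explicit verification that $G \le \max\{\braket{\chi}{\sigma} : \chi \in \mathcal T_1\}$ (including the $\theta=-1$ edge case and the remark that $f$ is automatically well-defined in the $\chi_a=0$ branch); the paper simply asserts existence, treating that bound as immediate, so this is a harmless but reasonable bit of extra care rather than a different argument.
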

\begin{proof}
Using Proposition~\ref{prop:T0T1}, there exists $\chi \in \T_1$ with $\braket{\chi}{\sigma}=G$.
If $\chi_a=0$, by Proposition~\ref{prop:Sh1} we have $\chi_b=\chi_c=\chi_d=0$, then there must be an index $i>d$, $i\notin\{a,b,c\}$, with $\chi_i\ne 0$ as otherwise $\braket{\chi}{\sigma}<G$.
So \[G=\braket{\chi}{\sigma}\ge\sigma_i\ge\sigma_f.\]
If $\chi_a \ne 0$, by Proposition~\ref{prop:Sh1} we have
\[
G=\braket{\chi}{\sigma}\ge \sigma_a-\sigma_b= \sigma_c+\sigma_d+\theta.\qedhere
\]
\end{proof}

\section{Bounding $d$}\label{sec:dbounding}

In this section, we will prove that $d=0$. We assume that $d>0$ for contradiction.

Recall that we write $(e_0,e_1, \dots, e_{n+1})$ for the orthonormal basis of $\Z^{n+2}$, and $\sigma = \sum_i \sigma_i e_i$.
Since $\Lambda(q,-p)$ is indecomposable (Proposition~\ref{indecomposable}), $\sigma_0\ne0$, otherwise $\sigma^\perp$ would have a direct summand $\mathbb Z$. So $\sigma_0=1$.  By Lemma~\ref{lem:OddPairing}, we have that $[v_d]$ contains $x_0$. Set
\begin{equation}\label{eq:w}
w=\theta e_0+e_d+e_c+e_b-e_a,
\end{equation}
where $\theta\in\{-1,1\}$ is as in Corollary~\ref{cor:sigma}. 

{\lemma\label{lemma:w}$w$ is an irreducible vector of $\sigma^\perp$. Also, $x_0 \not \in [w]$.}
\begin{proof}
Corollary~\ref{cor:sigma} shows that $w$ is in $\sigma^\perp$. Suppose $w=x+y$ with $x,y\in \sigma^\perp$ and $\braket{x}{y}\ge 0$.
If both $x,y$ are nonzero, by Lemma~\ref{lem:Decomp} we may assume that one of the vectors is $e_d-e_0$ and the other is $-e_a+e_b+e_c$. Both vectors will then be irreducible and $x_0\in [x],[y]$. That implies $\braket{x}{y}\ne 0$, which is a contradiction. The second statement is immediate from Lemma~\ref{lem:OddPairing}.
\end{proof}

\begin{cor}\label{cor:epsilon+1}
If one of the following two conditions holds, then $\theta=1$:
\newline(1) $\sigma_d=1$;
\newline(2) there exists a vector $v$ with $\braket{v}{e_0}=-\braket{v}{e_d}=1$, $\max\supp(v)=d$ and $|\braket{v}{w}|\le1$.
\end{cor}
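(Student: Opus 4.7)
The plan is to dispatch condition~(2) by a direct standard-basis computation and to reduce condition~(1) to a contradiction that compares the rigid $e$-basis pairing $\braket{v}{w}=\theta-1$ with the limited options coming from the interval structure of $\Lambda(q,-p)$.

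For (2), the assumption $\max\supp(v)=d$ gives $\braket{v}{e_a}=\braket{v}{e_b}=\braket{v}{e_c}=0$, because $a,b,c>d$. Expanding $\braket{v}{w}$ from the definition~\eqref{eq:w}, using $\braket{v}{e_0}=1$ and $\braket{v}{e_d}=-1$, yields $\braket{v}{w}=\theta-1$. The hypothesis $|\braket{v}{w}|\le 1$ then forces $\theta\ge 0$, and since $\theta\in\{-1,1\}$ we conclude $\theta=1$.

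For (1), suppose for contradiction that $\sigma_d=1$ and $\theta=-1$. Since $\sigma_0=1$ and $\sigma$ is non-decreasing, the vector $v=e_0-e_d$ lies in $\sigma^\perp$ and is irreducible by Lemma~\ref{lem:irred}; the computation from (2) still applies and gives $\braket{v}{w}=-2$. Corollary~\ref{cor:sigma} reduces to $\sigma_a=\sigma_b+\sigma_c$ in this setting, so $v+w=e_c+e_b-e_a$ also lies in $\sigma^\perp$. An exhaustive check of the decompositions permitted by Lemma~\ref{lem:Decomp}, using that $\sigma_a$, $\sigma_b$, $\sigma_c$ and the partial sums $\sigma_a-\sigma_b=\sigma_c$, $\sigma_a-\sigma_c=\sigma_b$ are all nonzero, shows (exactly as in the proof of Lemma~\ref{lemma:w}) that $v+w$ is irreducible.

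By Lemma~\ref{lem:OddPairing}, both $[v]$ and $[v+w]$ contain $x_0$ while $[w]$ does not. Since $|v|^2=2$, $[v]=[x_0,\ldots,x_j]$ with $a_0=\cdots=a_j=2$; and $|v+w|^2=3$ together with the formula $\sum_{i=0}^m a_i=2m+3$ for $[v+w]=[x_0,\ldots,x_m]$ and the bound $a_i\ge 2$ forces $m>j$. Writing $v=\epsilon_v[v]$ and $v+w=\epsilon'[v+w]$ in the vertex basis and expanding $w=(v+w)-v$, we find two cases: when $\epsilon'=\epsilon_v$, the vector $w$ equals $\epsilon_v[x_{j+1},\ldots,x_m]$ and the consecutive-interval pairing gives $\braket{v}{w}=-1$; when $\epsilon'=-\epsilon_v$, the vector $w$ has coefficient $-2\epsilon_v$ on $x_0,\ldots,x_j$ and $-\epsilon_v$ on $x_{j+1},\ldots,x_m$, which is not of the form $\pm[I]$ for any interval $[I]$, contradicting Proposition~\ref{prop:IntervalsIrreducible}. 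Either branch is incompatible with $\braket{v}{w}=-2$, yielding the desired contradiction. The principal obstacle is this interval bookkeeping: one must produce $v+w$ as a second $x_0$-containing irreducible whose interval strictly contains $[v]$, and then use the sign comparison between $\epsilon_v$ and $\epsilon'$ to rule out both branches against the rigid $e$-basis value.
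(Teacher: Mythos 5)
Your proof of condition~(2) is exactly the paper's: a direct $e$-basis expansion giving $\braket{v}{w}=\theta-1$, forcing $\theta=1$.

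For condition~(1), your argument is correct, but you take a considerably longer route than necessary, and in fact the two auxiliary vectors you construct already hand you the contradiction without any interval bookkeeping. You build $v=e_0-e_d\in\sigma^\perp$ and observe $v+w=e_c+e_b-e_a\in\sigma^\perp$. But at that point note that $w = (-v) + (v+w)$, and
\[
\braket{-v}{\,v+w\,} = -\braket{v}{v} - \braket{v}{w} = -2 - (-2) = 0 \ge 0,
\]
with both summands nonzero. That is precisely a reducing decomposition of $w$, contradicting Lemma~\ref{lemma:w} immediately. This is the paper's one-line argument: when $\sigma_d=1$ and $\theta=-1$, the decomposition $w=(-e_0+e_d)+(e_c+e_b-e_a)$ has both pieces in $\sigma^\perp$ with zero inner product, so $w$ is reducible. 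Your subsequent interval analysis — proving $v+w$ irreducible, comparing lengths of the $x_0$-containing intervals $[v]$ and $[v+w]$, and splitting into the $\epsilon'=\pm\epsilon_v$ cases — is sound (though the closing sentence slightly misstates the second branch, which contradicts Proposition~\ref{prop:IntervalsIrreducible} rather than $\braket{v}{w}=-2$), but it is all superfluous once the reducibility of $w$ is noticed.
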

\begin{proof}
If $\sigma_d=1$ and $\theta=-1$, then $w=(-e_0+e_d)+(e_c+e_b-e_a)$ is reducible, a contradiction to Lemma~\ref{lemma:w}.

If there exists a vector $v$ as in the statement, then since $\braket{v}{e_0}=-\braket{v}{e_d}=1$ and $\max\supp(v)=d$, we have $\braket{v}{w}=\theta-1$. Using $|\braket{v}{w}|\le1$, we have $\theta=1$.
\end{proof}

{\rmk\label{wpairing}When $d>0$, we have $[v_d]$ contains $x_0$. For any $0<i<d$, $[v_i]$ does not contain $x_0$. Also, $\supp(v_i)\cap\supp(w)=\emptyset \text{ or } \{0\}$, so $|\braket{w}{v_i}|\le 2$.}

\begin{lemma}\label{lem:wdagger}
Suppose that $0\notin \supp(v_d)$, then $[v_d]\dagger[w]$.
\end{lemma}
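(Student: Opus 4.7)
The plan is to compute $\braket{v_d}{w}$ directly and then exploit the classification of irreducible vectors as intervals in the linear lattice $\Lambda(q,-p)$. Since $0\notin\supp(v_d)$, the standard basis vector takes the form $v_d=-e_d+\sum_{i\in A}e_i$ with $A\subset\{1,\dots,d-1\}$; paired against $w=\theta e_0+e_d+e_c+e_b-e_a$, the supports overlap only at index $d$ with opposite signs, giving $\braket{v_d}{w}=-1$. Both $v_d$ (Lemma~\ref{lem:irred}) and $w$ (Lemma~\ref{lemma:w}) are irreducible, so each corresponds to an interval (Proposition~\ref{prop:IntervalsIrreducible}). By Lemma~\ref{lem:OddPairing} and Lemma~\ref{lemma:w}, $x_0\in[v_d]$ while $x_0\notin[w]$, so $[v_d]$ has left endpoint $0$ and $[w]$ has left endpoint at least $1$. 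The four possible relative positions of two intervals are distant, consecutive, sharing a common end, and transversal; the pairing $-1$ rules out distant, and the asymmetric position of $x_0$ forces any shared common end to be on the right, i.e.\ $[w]\subsetneq[v_d]$.

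To rule out the shared-right-end case, I would write $v_d=\epsilon_1[v_d]$ and $w=\epsilon_2[w]$ with $\epsilon_i\in\{\pm1\}$. A direct calculation in the linear lattice yields $\braket{[v_d]}{[w]}\ge1$ in this case, so the value $-1$ of the pairing forces $\epsilon_1\epsilon_2=-1$ and $\braket{[v_d]}{[w]}=1$. Letting $l_2$ and $r$ denote the left and right endpoints of $[w]$, we then have $v_d+w=\epsilon_1([v_d]-[w])=\pm[0,l_2-1]$, an interval consecutive to $[w]$. Computing $\braket{v_d+w}{w}$ two ways gives the desired contradiction: directly, $\braket{v_d+w}{w}=\braket{v_d}{w}+|w|^2=-1+5=4$, while via the interval structure, $\braket{v_d+w}{w}=\epsilon_1\epsilon_2\braket{[0,l_2-1]}{[l_2,r]}=(-1)(-1)=1$, since consecutive intervals pair to $-1$.

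For the transversal case $[v_d]=[0,r_1]$ and $[w]=[l_2,r_2]$ with $1\le l_2\le r_1<r_2$, the standard pairing formula in a linear lattice yields $\braket{[v_d]}{[w]}=\sum_{i=l_2}^{r_1}(a_i-2)$, and the requirement $\epsilon_1\epsilon_2=-1$ pins this sum to $1$; hence there is exactly one high-weight vertex $x_m$ with $a_m=3$ in $[v_d]\cap[w]$. Since $v_d$ is not tight, Lemma~\ref{lem:BrIsTight} gives that $v_d$ is unbreakable, and then Proposition~\ref{prop:Unbreakable} forces $x_m$ to be the unique high-weight vertex in $[v_d]$, so $|v_d|^2=3$. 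From $|w|^2=5$ one derives $\sum_{i=r_1+1}^{r_2}(a_i-2)=2$, so $[w]$ must contain further high-weight vertices beyond $x_m$. Applying Lemma~\ref{lem:DistinctHighNorm} to the distinct unbreakable vectors $v_d$ and $w$ (both of norm $\ge 3$) would then complete the contradiction, since $x_m$ would be a high-weight vertex of both intervals.

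The main obstacle is verifying that $w$ is itself unbreakable, as required to invoke Lemma~\ref{lem:DistinctHighNorm}. If instead $w=x+y$ with $|x|,|y|\ge 3$, $\braket{x}{y}=-1$, and $x,y\in\sigma^\perp$, the explicit form of $w$ (support of size $5$, all entries $\pm1$) restricts the decomposition to a partition of $\supp(w)=\{0,a,b,c,d\}$ together with one shared extra index $i\notin\supp(w)$ carrying opposite-sign $\pm1$ entries. The constraint $x\in\sigma^\perp$ then pins $\sigma_i$ to one of $\sigma_a-\theta$, $\sigma_c+\sigma_d+\theta$, $\sigma_b+\sigma_d+\theta$, or $\sigma_b+\sigma_c+\theta$; each such coincidence must be ruled out using the maximality convention~\eqref{eq:abcdMAX}, Corollary~\ref{cor:sigma}, and bounds such as Lemma~\ref{lem:Gbound}. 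The degenerate subcase $|v_d|^2=2$ (i.e.\ $|A|=1$) is handled separately: then $[v_d]$ has no high-weight vertex at all, so the transversal scenario is already impossible and the shared-end argument of the previous paragraph still applies verbatim.
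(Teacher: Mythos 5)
Your opening steps match the paper exactly: $\braket{v_d}{w}=-1$, $x_0\in[v_d]$ but $x_0\notin[w]$, and the distant case is ruled out by the nonzero pairing. For the shared-end case, your route is needlessly indirect (and contains a miscalculation: for $[w]\subsetneq[v_d]$ sharing the right end, $\braket{[v_d]}{[w]}=|w|-1=4$, not $1$, so the contradiction is already immediate from $|\braket{v_d}{w}|=4\neq 1$ without invoking the consecutive interval $[0,l_2-1]$); your argument does reach a contradiction, but by a detour that takes the false intermediate ``$\braket{[v_d]}{[w]}=1$'' at face value.

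The serious gap is in the transversal case. You correctly deduce that $\sum_{i\in[v_d]\cap[w]}(a_i-2)=1$ with a unique weight-$3$ vertex $x_m$ there, that $|v_d|=3$, and that $\sum_{i=r_1+1}^{r_2}(a_i-2)=2$. But this last fact means $[w]$ contains at least \emph{two} high-weight vertices ($x_m$ and at least one more in $[r_1+1,r_2]$), so by Proposition~\ref{prop:Unbreakable} the vector $w$ is \emph{breakable} in exactly the scenario you are trying to exclude. Lemma~\ref{lem:DistinctHighNorm} therefore cannot be applied, and your plan to ``verify that $w$ is unbreakable'' is doomed from the start — you would be trying to prove something false. (Moreover, the verification you sketch only examines breakings where $x,y$ differ from a partition of $\supp(w)$ by at most one extra index; a general breaking $w=\epsilon([I_1]+[I_2])$ into consecutive subintervals of $[w]$ need not look like that in $e_i$-coordinates, since Lemma~\ref{lem:Decomp} only applies when $\braket{x}{y}\ge 0$, not $\braket{x}{y}=-1$.) You also do not address the $\pitchfork$ subcase where $[w]$ lies strictly in the interior of $[v_d]$ (both ends differ), though that one is quickly killed by unbreakability of $v_d$.

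The paper's actual route for the $\pitchfork$ case is different and avoids this trap: it writes $w+v_d=x+y$ as a sum over two \emph{distant} intervals with $\braket{x}{y}=0$, so Lemma~\ref{lem:Decomp} \emph{does} apply, forces $x$ to be a sum of two signed coordinate vectors with $x_0\in[x]$, uses $v_d=e_i+e_{d-1}-e_d$ (from $|v_d|=3$ and Lemma~\ref{lem:j-1}) to pin down $\supp(w+v_d)$, and then eliminates every possibility for $x$ by comparing $\sigma$-coordinates against Corollary~\ref{cor:sigma} and Corollary~\ref{cor:epsilon+1}. You have the right bookkeeping facts in hand; the missing idea is to exploit the orthogonal (rather than breaking) decomposition of $w+v_d$ instead of trying to make $w$ unbreakable.
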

\begin{proof}
We can compute $\braket{w}{v_d}=-1$.
Assume that $[v_d]\dagger[w]$ does not happen, then either $[v_d]\prec[w]$ or $[v_d]\pitchfork[w]$. Note that $x_0\in[v_d]$ and $x_0\notin[w]$.

If $[v_d]\prec[w]$, then $|v_d|=2$, and $[w]$ and $[v_d]$ share their right end. This is not possible since $|w|>|v_d|$.

If $[v_d]\pitchfork[w]$, then $|[v_d]\cap[w]|=3$, and there exists $\epsilon\in\{-1,1\}$ such that $w=\epsilon[w]$ and $v_d=-\epsilon[v_d]$. So $w+v_d=x+y$ with $[x]$ and $[y]$ being distant, and we may assume $x_0\in[x]$.
Since $v_d$ is not tight, $v_d$ is unbreakable. So $|v_d|=|[w]\cap[v_d]|=3$, and $|x|=2$.
We get $v_d=e_i+e_{d-1}-e_d$ for some $0<i<d-1$, and
\[
w+v_d=\theta e_0+e_i+e_{d-1}+e_c+e_b-e_a.
\]
Using Lemma~\ref{lem:Decomp} and the fact that $x_0\in[x]$, we have either $x=e_j-e_a$ for some $j\in\{0,i,d-1\}$ or $x=-e_0+e_k$ for some $k\in\{c,b\}$.
If $x=e_j-e_a$, then $\sigma_j=\sigma_a=\sigma_b$, contradicting Corollary~\ref{cor:sigma}.
If $x=-e_0+e_k$, then $\theta=-1$ and $\sigma_d=\sigma_k=1$, contradicting Corollary~\ref{cor:epsilon+1}.
\end{proof}

\begin{lemma}\label{lem:ThreeCases}
Suppose that $0\notin\supp(v_d)$ and $|\braket{v_i}{v_d}|=1$ for some $i$ with $0<i<d$. Then $i=1$.
\end{lemma}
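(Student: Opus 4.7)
The plan is to use the interval abutment structure from Lemma~\ref{lem:PairingNonzero} together with Lemma~\ref{lem:wdagger} and the no-claw/no-heavy-triple constraints on the intersection graph $G$.

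Setup: by Lemma~\ref{lem:PairingNonzero}, $|\braket{v_i}{v_d}|=1$ forces $[v_i]$ and $[v_d]$ to abut. Since $[v_d]$ contains $x_0$ but $[v_i]$ does not (Remark~\ref{wpairing}), the interval $[v_d]$ has left endpoint $x_0$, and $[v_i]$ lies entirely to the right of $x_0$. Lemma~\ref{lem:wdagger} supplies the additional information $[v_d]\,\dagger\,[w]$, so $[w]$ begins at the vertex immediately to the right of $[v_d]$. I would then split by how $[v_i]$ abuts $[v_d]$: the case $[v_d]\prec[v_i]$ is excluded since it would force $x_0\in[v_i]$, leaving $[v_i]\,\dagger\,[v_d]$, $[v_i]\prec[v_d]$, or $[v_i]\pitchfork[v_d]$.

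In each surviving case I would pin down the changemaker-theoretic form of $v_i$ using Definition~\ref{stbasis} and Lemma~\ref{lem:j-1}, and compare it with the support of $w=\theta e_0+e_d+e_c+e_b-e_a$. The goal is to establish that, under the configuration, $v_1$ is a neighbor of $v_d$ in $G$ (using the specific form $v_1=e_0-e_1$ or $2e_0-e_1$ together with the support of $v_d$), so that if $i>1$ the quadruple $(v_d;v_1,v_i,w)$ either realizes a claw or realizes a heavy triple in $G$, contradicting Lemma~\ref{claw} or Lemma~\ref{heavytriple}. In the transverse case, I would instead follow the strategy of Lemma~\ref{lem:wdagger}: decompose $v_d+v_i$ or $v_d-v_i$ into two distant intervals via Lemma~\ref{lem:Decomp}, and derive a reducibility contradiction with Lemma~\ref{lem:irred}, or violate Corollary~\ref{cor:sigma} or Corollary~\ref{cor:epsilon+1}.

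The main obstacle I expect is the transverse case $[v_i]\pitchfork[v_d]$: here $v_i$ and $v_d$ must share exactly their overlap region in the linear lattice while their $\Z^{n+2}$-supports align consistently, and one has to exclude many possible shapes of $v_i$ for various $i>1$. I anticipate needing to carefully control the high-weight vertices of $[v_d]$ via Proposition~\ref{prop:Unbreakable} and the gappy indices of $v_i$ via Lemma~\ref{gappy3}, much as in the delicate $\pitchfork$ analysis already carried out in the proof of Lemma~\ref{lem:wdagger}.
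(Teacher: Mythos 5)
Your overall skeleton matches the paper's: you correctly invoke Lemma~\ref{lem:wdagger} to place $[v_d]\dagger[w]$, exclude $[v_d]\prec[v_i]$ because $x_0\in[v_d]\setminus[v_i]$, and enumerate the cases $[v_i]\dagger[v_d]$, $[v_i]\prec[v_d]$, $[v_i]\pitchfork[v_d]$. (A small slip: Lemma~\ref{lem:PairingNonzero} only gives ``abut $\Rightarrow$ nonzero,'' not the converse; the $\pitchfork$ case does not abut, though you do include it.) However, the core of your plan has two gaps.

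First, you want to establish $v_1\sim v_d$ and then produce a claw or heavy triple built from $v_d, v_1, v_i, w$. But whether $v_1$ neighbors $v_d$ hinges on $1\in\supp(v_d)$, which is \emph{not} available: it is precisely the content of Proposition~\ref{prop:vd}, whose proof depends on this lemma. With $0\notin\supp(v_d)$ as the only support hypothesis, $v_1=e_0-e_1$ (or $2e_0-e_1$) could well be orthogonal to $v_d$, so the edge $v_1\sim v_d$ cannot be assumed. Second, in the main case $[v_i]\dagger[v_d]$, the interval $[v_d]$ starts at $x_0$, and both $[v_i]$ and $[w]$ must be consecutive to $[v_d]$ on the right; hence $[v_i]$ and $[w]$ \emph{share their left endpoint} and are therefore adjacent in $G(T)$. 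This immediately kills the proposed claw $(v_d;v_1,v_i,w)$. The paper's proof exploits this shared-end configuration rather than fighting it: if $|v_i|>2$ this shared end forces $|\braket{v_i}{w}|\ge 2$, so $v_i$ is tight, and then $i>1$ would force $|\braket{v_i}{w}|=|w|-1=4$, contradicting Remark~\ref{wpairing}; if $|v_i|=2$, the right-end abutment gives $|\braket{v_i}{w}|=1$, and since $\supp(w)\cap\{1,\dots,d-1\}=\emptyset$ the only Euclidean coordinate giving a nonzero pairing with $v_i=e_{i-1}-e_i$ is $i-1=0$, hence $i=1$. Your transverse-case plan (split $v_d\pm v_i$ via Lemma~\ref{lem:Decomp} and seek a reducibility contradiction) is a plausible alternative, but the paper instead uses Lemma~\ref{lem:DistinctHighNorm} to force $v_i$ tight, then a high-weight-vertex count against $|w|=5$ and Remark~\ref{wpairing}. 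In short: the case split and the role of $w$ are right, but the claw/heavy-triple mechanism does not fire here, and the $v_1\sim v_d$ edge you need is circular.
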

\begin{proof}
Since $i<d$, $x_0\notin[v_i]$ by Lemma~\ref{lem:OddPairing}. We have $[v_d]\dagger[w]$ by Lemma~\ref{lem:wdagger}.

If $[v_i]\dagger[v_d]$, then $[v_i]$ and $[w]$ share their left end. If $|v_i|>2$, we have
 $2\le|\braket{v_i}{w}|$,
hence $\braket{v_i}{e_0}=2$ and $v_i$ is tight. If we also have $i>1$, then $|v_i|\ge6>|w|$, so
$|\braket{v_i}{w}| = |w|-1=4$, which is not possible. So in order to prove $i=1$, we only need to assume $|v_i|=2$ in this case.

If $[v_i]$ and $[v_d]$ share their right end, then we must have $|v_i|=2$.

In the above two cases we have $|v_i|=2$ and $[v_i]$ abuts the right end of $[v_d]$, so $|\braket{v_i}{w}|=1$, which implies $i=1$.

If $[v_i]\pitchfork[v_d]$, then $|[v_i]\cap[v_d]|=|v_d|=3$. By Lemma~\ref{lem:DistinctHighNorm}, $v_i$ is tight. If $i>1$, $|v_i|\ge6=|w|+|v_d|-2$. Since $[v_d]\dagger[w]$, the interval $[v_i]$ must contain all high weight vertices of $[w]$. Thus $|\braket{w}{v_{i}}|\ge|w|-2=3$, a contradiction (Remark~\ref{wpairing}).
\end{proof}

\begin{lemma}\label{lem:vdgappy}$v_d$ is not gappy.
\end{lemma}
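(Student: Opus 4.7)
The plan is to assume for contradiction that $v_d$ is gappy at some index $k$, so $k\in\supp(v_d)$, $k+1\notin\supp(v_d)$, and $k<d-1$. Several initial observations follow immediately: being gappy means $v_d$ is not tight, hence unbreakable by Lemma~\ref{lem:BrIsTight}; the maximization rule in Definition~\ref{stbasis} forces $|A_d|\ge 2$ (the singleton $\{k\}$ would otherwise be beaten by $\{k+1\}$, which gives the same sum $\sigma_d$ but a strictly larger $\sum 2^i$), so $|v_d|\ge 3$; and $|v_{k+1}|\ge 3$ by Lemma~\ref{gappy3}. Using Lemma~\ref{lem:j-1} to place $k\in\supp(v_{k+1})$ together with $k\in\supp(v_d)$, a direct computation gives $\braket{v_{k+1}}{v_d}\ge 1$. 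I then case-split on whether $v_{k+1}$ is tight and whether $0\in\supp(v_d)$.

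First suppose $v_{k+1}$ is not tight; then it is also unbreakable, so Lemma~\ref{lem:Pairing1} forces $\braket{v_{k+1}}{v_d}=1$ and $[v_{k+1}]\dagger[v_d]$. If $0\notin\supp(v_d)$, applying Lemma~\ref{lem:ThreeCases} with $i=k+1$ yields $k=0$, which contradicts $k\in\supp(v_d)$. If $0\in\supp(v_d)$ and $k=0$, then $v_{k+1}=v_1$ is non-tight with $\sigma_0=1$, forcing $v_1=e_0-e_1$ and $|v_1|=2$, contradicting $|v_{k+1}|\ge 3$. The remaining sub-case is $0\in\supp(v_d)$ with $k\ge 1$; here $\braket{v_{k+1}}{v_d}=|A_{k+1}\cap A_d|=1$ forces $A_{k+1}\cap A_d=\{k\}$ and in particular $0\notin A_{k+1}$. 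One then examines the configuration of $[v_d]$, $[v_{k+1}]$, and $[w]$: since $[v_d]\ni x_0$ (Lemma~\ref{lem:OddPairing}), the interval $[v_d]$ must equal $\{x_0,\dots,x_m\}$, while $[v_{k+1}]$ and $[w]$ both avoid $x_0$ (Lemmas~\ref{lem:OddPairing}~and~\ref{lemma:w}). Corollary~\ref{cor:epsilon+1} applied with $v=v_d$ pins down $\theta$, and then the clawlessness (Lemma~\ref{claw}) and heavy-triple-freeness (Lemma~\ref{heavytriple}) of $G(T)$ together with $[v_{k+1}]\dagger[v_d]$ force a structural contradiction.

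Next suppose $v_{k+1}$ is tight, so $|v_{k+1}|=k+5$ and $\sigma_{k+1}=1+\sigma_0+\cdots+\sigma_k$. By Lemma~\ref{tight1}, $v_{k+1}$ is the unique tight vector in $\sigma^\perp$; in particular if $k\ge 1$ then $v_1$ is non-tight with $|v_1|=2$. A direct computation from the doubled coefficient at $e_0$ gives $\braket{v_{k+1}}{w}=2\theta$, and combining this with the bound of Lemma~\ref{lem:Gbound} (which in turn forces a comparison between $G$ and $\sigma_{k+1}$) yields a structural incompatibility with the gappy form of $v_d$. The sub-case $k=0$, i.e.\ $v_1$ tight with $\sigma_1=2$, is handled directly: then $0\in A_d$ and $1\notin A_d$, so the representation $\sigma_d=\sigma_0+\sum_{i\in A_d\setminus\{0\}}\sigma_i$ can be rewritten using $\sigma_0+\sigma_1=\sigma_2 + \cdots$ (via the tight identity) to produce a competing $A_d'$ with a strictly larger $\sum 2^i$, contradicting the maximization in Definition~\ref{stbasis}.

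The main obstacle is the sub-case of Case~1 in which $v_{k+1}$ is non-tight, $0\in\supp(v_d)$, and $k\ge 1$: here neither Lemma~\ref{lem:ThreeCases} nor simple tight-vector counting is directly available, and one has to simultaneously track the combinatorics of three intervals $[v_d]$, $[v_{k+1}]$, and $[w]$ together with the $D_4$-derived constraints on the indices $\{a,b,c,d\}$ to rule out every possible configuration.
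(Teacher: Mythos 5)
Your proof has a genuine gap that you yourself flag: the sub-case where $v_{k+1}$ is not tight, $0\in\supp(v_d)$, and $k\ge 1$ is left as ``a structural contradiction'' derived from clawlessness and heavy-triple-freeness, but no actual argument is supplied, and the parallel sub-case ``$v_{k+1}$ tight with $k\ge 1$'' is similarly waved through with a reference to Lemma~\ref{lem:Gbound}. These are not peripheral corners: if you pick an arbitrary gappy index $k$ (as you do), both of these configurations are live and need to be killed.

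The missing idea, which is what makes the paper's proof short, is to take $i$ to be the \emph{smallest} gappy index of $v_d$ and to invoke a structural result of Greene (cited in the paper as \cite[Paragraph~2 in Section~6, and Propositions~8.6, 8.7, 8.8]{greene:LSRP}): if the smallest gappy index $i$ is positive, then $i=\min\supp(v_d)$. This single fact instantly rules out the troublesome scenario $0\in\supp(v_d)$ with a positive gappy index, because it forces $0\notin\supp(v_d)$, after which Lemma~\ref{lem:ThreeCases} applied to $v_{i+1}$ gives $i+1=1$, i.e.\ $i=0$, a contradiction. Meanwhile, if $i=0$, then Lemma~\ref{gappy3} forces $v_1$ tight with $|v_1|=5=|w|$, and the paper finishes by a short $\pitchfork$-decomposition argument on $w-\theta v_1$. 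Your case~$k=0$, $v_1$ tight is in the right direction but does not carry out the decomposition argument (you instead sketch a rewriting of $A_d$ using the tight identity, which by itself does not contradict the maximization rule, since replacing $\sigma_0$ by $\sigma_0+\sigma_1-\sigma_1$ does not obviously yield a competing representation of $\sigma_d$ as a sum of \emph{distinct} $\sigma_i$'s with $i<d$). Without the Greene structural input, the case tree you build is genuinely much harder than the paper's, and as written you do not close it.
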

\begin{proof}
Suppose for contradiction that $v_d$ is gappy. Take the index $i$ to be the smallest gappy index of $v_d$. First suppose that $i=0$. Then, using Lemma~\ref{gappy3}, $v_{1}$ will be tight with $|v_1|=5$. Note that $\braket{w}{v_1}=2\theta$, $|v_1|=|w|=5$, so $[w]\pitchfork [v_1]$ with
$|[v_1]\cap[w]|=4$,
and there exists $\epsilon\in\{-1,1\}$ such that $w=\epsilon[w]$ and $v_1=\theta\epsilon[v_1]$.
It follows that $w-\theta v_1
=x+y$ with $[x]$ and $[y]$ being distant, $|x|=|y|=3$.
Now
\[
w-\theta v_1=-\theta e_0+\theta e_1+e_{d}+e_c+e_b-e_a.
\]
Since $x_0\notin[w],[v_1]$, we have $x_0\notin[x],[y]$. Using Lemma~\ref{lem:Decomp}, one of $x,y$ has the form
$\pm e_j+e_k+e_l$, where $j\in\{0,1\}, \{k,l\}\subset\{d,c,b\}$, but this vector is not in $\sigma^{\perp}$, a contradiction.

Suppose $i>0$. Then $i=\min\supp(v_d)$ by \cite[Paragraph~2 in Section~6, and Propositions~8.6, 8.7, 8.8]{greene:LSRP}.
Since $\braket{v_{i+1}}{v_d}=1$, by Lemma~\ref{lem:ThreeCases} we have $i+1=1$, a contradiction.
\end{proof}

{\prop\label{prop:vd} $\min \supp(v_d)\le 1$.}
\begin{proof}
Set $i=\min \supp(v_d)$. If $i>0$, since $\braket{v_{i}}{v_d}=-1$, by Lemma~\ref{lem:ThreeCases} we have $i=1$.
\end{proof}

Let $G$ be defined as in (\ref{eq:Gdef}). Our strategy is to first find a bound for $G$, and then find a bound for the integer $d$. Next, we do a case-by-case analysis to find that indeed $d=0$.

{\lemma\label{lem:vdtight}$v_d$ is not tight.}
\begin{proof}
Suppose for contradiction that $v_d$ is tight. Using Lemma~\ref{lem:Gbound}, we get
\[
\sigma_d=G\ge \min\{\sigma_f,\sigma_d+\sigma_c+\theta\}\ge\min\{\sigma_f,2\sigma_d-1\},
\]
which is not possible by (\ref{eq:abcdMAX}) and  Corollary~\ref{cor:epsilon+1}.
\end{proof}

Combining Proposition~\ref{prop:vd} and Lemmas~\ref{lem:vdgappy}~and~\ref{lem:vdtight}, we have:
{\cor\label{cor:vd}$v_d=v_{d,0}e_0+e_1+\cdots-e_d$ with $v_{d,0}\in \{0,1\}$.}

With the notation of Corollary~\ref{cor:vd} in place, we start the analysis to deduce $d=0$. The following identity will be useful to keep in mind:
\begin{equation}\label{eq:vdG}
\sigma_d = G - 2 + v_{d,0}.
\end{equation}

\begin{lemma}\label{lem:short1}If either $|v_d|>2$ or $d=1$, then
\[
G\ge \sigma_d+\sigma_c+\theta.
\]
\end{lemma}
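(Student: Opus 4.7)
The strategy is to exploit Lemma~\ref{lem:Gbound}: it produces some $\chi \in \mathcal T_1$ with $\braket{\chi}{\sigma}=G$, and if such a $\chi$ satisfies $\chi_a \ne 0$ the desired bound $G \ge \sigma_a-\sigma_b = \sigma_c+\sigma_d+\theta$ is immediate. So the entire work is to rule out the case $\chi_a=0$. By Proposition~\ref{prop:Sh1}, $\chi_a=0$ forces $\chi_b=\chi_c=\chi_d=0$; hence $\chi = \sum_{i \in S} e_i$ for some $S \subset \{0,1,\dots,n+1\}\setminus\{a,b,c,d\}$ with $\sum_{i \in S}\sigma_i = G = 1+\sigma_0+\cdots+\sigma_{d-1}$, and additionally $G \ge \sigma_f$.

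My plan is then to produce a \emph{different} test vector $\chi' \in \mathcal T_1$ with $\braket{\chi'}{\sigma}=G$ and $\chi'_a \ne 0$; applying Lemma~\ref{lem:Gbound} to $\chi'$ in place of $\chi$ will yield the inequality. The natural candidate is $\chi' = e_a - e_b + \sum_{i \in S'} e_i$ for a suitable $S' \subset \{0,\dots,n+1\}\setminus\{a,b,c,d\}$, since such a vector has the admissible special tuple $(0,0,-1,1)$ from Proposition~\ref{prop:Sh1} and gives $\chi'_a = 1 \ne 0$. The requirement $\braket{\chi'}{\sigma}=G$ reduces to $\sum_{i \in S'}\sigma_i = G-\sigma_a+\sigma_b$, and the task is to exhibit $S'$ by modifying $S$.

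To construct $S'$, I would use the identity $\sigma_d = v_{d,0}\sigma_0 + \sigma_1 + \cdots + \sigma_{d-1}$ furnished by Corollary~\ref{cor:vd}, which re-expresses $\sigma_d$ as a sum over the positive-coefficient indices of $v_d$. This lets me trade part of $S \cap \{0,1,\dots,d-1\}$ for outside indices of matching total weight, absorbing the difference between $\sigma_a$ and $\sigma_b$. In the case $|v_d|>2$, $v_d$ has at least three positive-coefficient entries, which is exactly the combinatorial freedom needed to carry out the exchange while keeping the new coefficients on positions outside $\{a,b,c,d\}$ inside $\{0,1\}$. The case $d=1$ is handled directly: here $\sigma_1=1$, $\theta=1$ by Corollary~\ref{cor:epsilon+1}(1), and $G=2$, so $S$ is either a pair of weight-one indices outside $\{a,b,c,1\}$ or a single weight-two index outside $\{a,b,c,1\}$, and an explicit $S'$ can be read off in each subcase.

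The main obstacle is the bookkeeping for the exchange step: I must verify that after trading part of $S$ for $e_a - e_b$, the resulting vector still has coefficients in $\{0,1\}$ on all non-special positions (so that it really is in $\mathcal T_1$ according to the shape allowed by Proposition~\ref{prop:Sh1}), and that the total weight removed from $S$ is exactly $\sigma_a-\sigma_b = \sigma_c+\sigma_d+\theta$. The auxiliary bound $G \ge \sigma_f$ from Lemma~\ref{lem:Gbound} in the $\chi_a=0$ branch, together with the maximality convention~\eqref{eq:abcdMAX}, will help pin down $S$ enough to locate the exchange. Once $\chi'$ is produced, the $\chi_a \ne 0$ conclusion of Lemma~\ref{lem:Gbound} applied to $\chi'$ delivers $G \ge \sigma_c+\sigma_d+\theta$.
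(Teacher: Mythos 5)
Your framing is right — as in the paper, it suffices to rule out the case $\chi_a=0$ — but the way you propose to do this is circular. You want to construct a new test vector $\chi'=e_a-e_b+\sum_{i\in S'}e_i\in\mathcal T_1$ with $\braket{\chi'}{\sigma}=G$ and then invoke the $\chi_a\ne0$ branch of Lemma~\ref{lem:Gbound}. But every $\chi'\in\mathcal T_1$ with $\chi'_a\ne0$ automatically satisfies $\braket{\chi'}{\sigma}\ge\sigma_a-\sigma_b=\sigma_c+\sigma_d+\theta$ (by Proposition~\ref{prop:Sh1} together with the fact that all entries off $\{a,b,c,d\}$ are in $\{0,1\}$). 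Equivalently, your requirement $\sum_{i\in S'}\sigma_i=G-(\sigma_a-\sigma_b)$ forces $G\ge\sigma_a-\sigma_b$ just for $S'$ to exist. So the existence of $\chi'$ is \emph{equivalent} to the inequality you are trying to prove, and no amount of ``trading'' can manufacture it if it fails. The $d=1$ subcase makes this vivid: there $G=2$, $\sigma_d=1$, $\theta=1$, so $\sigma_a-\sigma_b=\sigma_c+2\ge 3>G$, and the ``explicit $S'$'' you promise simply cannot exist — the target weight $G-(\sigma_a-\sigma_b)$ is negative. (In the paper this case is eventually shown to be impossible, but one needs the lemma first, proved non-circularly, to reach that conclusion in Proposition~\ref{prop:d=0}.)

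The paper avoids this trap by arguing in a different arena. Assuming $\chi_a=0$, Lemma~\ref{lem:Gbound} gives $G\ge\sigma_f$, and together with the identity $G=\sigma_d+2-v_{d,0}$ from~\eqref{eq:vdG} and the maximality convention~\eqref{eq:abcdMAX} this pins down $\sigma_f\in\{\sigma_d+1,\sigma_d+2\}$. From this one builds an explicit auxiliary vector $v'_f\in\sigma^\perp$ (\emph{not} a test vector: $v'_f=-e_f+e_d+e_0$ or $-e_f+e_d+e_1+e_0$), checks it is irreducible, uses Lemma~\ref{lem:OddPairing} to see that $x_0\in[v'_f]$ so that $[v'_f]$ and $[v_d]$ share their left endpoint, and then derives a contradiction by comparing the interval geometry with the direct computation of $\braket{v_d}{v'_f}$. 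That geometric step — passing from the changemaker coordinates to the interval structure of $\Lambda(q,-p)$ — is the missing ingredient in your argument; without it, the assumption $\chi_a=0$ cannot be defeated.
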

\begin{proof}
Let $\chi$ be the vector as in Lemma~\ref{lem:Gbound}. By that lemma, it will suffice to show $\chi_a\ne0$. Assume that $\chi_a=0$, then Lemma~\ref{lem:Gbound} implies that $G\ge\sigma_f>\sigma_d$. Using \eqref{eq:vdG}, we have that $G\le\sigma_d+2$, so $\sigma_f\in\{\sigma_d+1,\sigma_d+2\}$.

If $\sigma_f=\sigma_d+1$, set $v'_f=-e_f+e_d+e_0$. If $\sigma_f=\sigma_d+2$, set $v'_f=-e_f+e_d+e_1+e_0$. (Note that $d\not = 1$ in this case, otherwise $G=2\ne\sigma_d+2$.) In either case,
$v'_f$ is irreducible and also in $\sigma^{\perp}$. Since $\braket{v'_f}{e_a+e_b+e_c+e_d}=1$, we get that $x_0\in [v'_f]$. So $[v_d]$ and $[v'_f]$ share their left endpoint. If $|v_d|>2$, then $|\braket{v_d}{v'_f}|\ge2$, which contradicts the direct computation $|\braket{v_d}{v'_f}|\le1$. If $d=1$, using Lemma~\ref{lem:vdtight}, we get $\braket{v_d}{v'_f}=0$: this is still giving a contradiction since the intervals $[v_d]$ and $[v'_f]$ share their left endpoints, and so $\braket{v_d}{v'_f}\not = 0$.
\end{proof}

{\prop\label{prop:Gbound}
If $|v_d|=2$, then either $d=1,G=2$, or else $d=2$, $G\in\{3,4\}$.

If $|v_d|>2$, then $d\in\{3,4\}$, $\theta=-1$, $v_{d,0}=0$, and $1+d\le G\le 5$.}
\begin{proof}
If $|v_d|=2$, our conclusion follows from Corollary~\ref{cor:vd}.

Now we assume that $|v_d|>2$. Using Lemma~\ref{lem:short1}, we have
\[
G\ge\sigma_d+\sigma_c+\theta\ge 2\sigma_d+\theta= 2(G-2+v_{d,0})+\theta,
\]
thus
\begin{equation}\label{eq:Gbound1}
G\le 4-\theta-2v_{d,0}.
\end{equation}

If $d\le2$, by Corollary~\ref{cor:vd} we have $v_{d,0}=1$ and $d=2$. We have $x_0\in[v_2]$ while $x_0\notin[w]$. Since $|v_2|=3<|w|$, we must have $|\braket{v_2}{w}|\le1$. Then $\theta=1$ by Corollary~\ref{cor:epsilon+1}. So $G\le 1$ by (\ref{eq:Gbound1}), which is not possible.

If $d\ge3$,  it follows from (\ref{eq:Gbound1}) that
\[4-\theta-2v_{d,0}\ge G\ge d+1\ge4,\] so $\theta=-1$, $v_{d,0}=0$, $d\le4$ and $G\le5$.
\end{proof}

Proposition~\ref{prop:Gbound} implies that $d\in\{0,1,2,3, 4\}$.
We now argue that $d=0$.

{\prop\label{prop:d=0} $d=0$.}
\begin{proof}
Suppose that $d=1$. Using Lemma~\ref{lem:vdtight}, we get that $v_1=-e_1+e_0$. We have that $G=2$ and $\sigma_1=1$.
By Corollary~\ref{cor:epsilon+1} and Lemma~\ref{lem:short1}, we get that
\[
2=G\ge \sigma_c+\sigma_1+1\ge 3,
\]
which is a contradiction. 

Suppose that $d=2$. It follows from Proposition~\ref{prop:Gbound} that $|v_2|=2$.
We separate the cases to whether $\sigma_1(=\sigma_2)$ is $1$ or $2$.

First assume that $\sigma_1=\sigma_2=1$.
If $c\ne 3$, then $x_0\in [v_3]$, thus $[v_2]$ and $[v_3]$ share their left end. So $\braket{v_3}{v_2}\ne0$. In particular, $1\not \in \supp(v_3)$. Since $\sigma_0=\sigma_1=1$, $0\not \in \supp(v_3)$, so $|v_3|=2$, which is impossible as $\sigma_3>1$ by (\ref{eq:abcdMAX}). If $c=3$,
note that $\theta=1$ by Corollary~\ref{cor:epsilon+1}, by Lemma~\ref{lem:Gbound} we have
\[
3=G\ge\min\{\sigma_f,\sigma_3+2\}.
\]
By (\ref{eq:abcdMAX}), $\sigma_f>\sigma_3$, so we have $\sigma_3\le2$.
If $\sigma_3=1$, then $\braket{v_3}{w}=0$ and $\braket{v_3}{v_2}=-1$. Since $x_0\not \in [v_3]$, $[v_3]$ abuts the right endpoint of $[v_2]$. Since $[v_2]\dagger [w]$ by Lemma~\ref{lem:wdagger}, we get $\braket{v_3}{w}\not = 0$, a contradiction. If $\sigma_3=2$, then $v_3=-e_3+e_2+e_1$. We have $v_3\sim v_1\sim v_2$, $|v_1|=|v_2|=2$, $[v_2]\dagger [w]$, so $[v_3]$ contains the leftmost high weight vertex of $[w]$, which contradicts the fact that $\braket{v_3}{w}=0$.

Next we suppose that ($d=2$ and) $\sigma_1=\sigma_2=2$. Then $v_1=2e_0-e_1$, $v_2=e_1-e_2$. We have $x_0\in[v_2]$, $x_0\notin[v_1],[w]$, and $[v_2]$ abuts both $[v_1]$ and $[w]$. So $[v_1]$ and $[w]$ share their left endpoint. It follows that $|\braket{v_1}{w}|=4$, which is not possible by Remark~\ref{wpairing}.

Suppose $d\ge3$. Proposition~\ref{prop:Gbound} implies that $v_d=-e_d+e_{d-1}+\cdots+e_1$. Also, since $5\ge G\ge2+\sigma_1+\sigma_2$, we find that $\sigma_1=1$. Consider the vector $v'_d=v_d-e_1+e_0$. Since $\theta=-1$ by Proposition~\ref{prop:Gbound}, $\braket{v'_d}{w}=0$. Using Corollary~\ref{cor:epsilon+1}, we get $\theta=1$, a contradiction.
\end{proof}

\section{{The case $d=0$}}\label{sec:d=0}
We now turn our attention to the classification in the case $d=0$: in what follows, we classify all changemaker linear lattices of this sort.
\begin{lemma}\label{lem:d=0}
$c = 1$, $\sigma_c = 1$, and $\sigma_a = \sigma_b + 1$.
\end{lemma}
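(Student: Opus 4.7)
The plan is to use the identity $G=1$ (a consequence of $d=0$ and $\sigma_0=1$) together with Lemma~\ref{lem:Gbound} to pin down $\sigma_c$, $\theta$, and $\sigma_a-\sigma_b$ in one stroke, and then to force $c=1$ by a pairing argument comparing $v_1$ to the vector $w$ of~\eqref{eq:w} specialized to $d=0$.

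First recall that $\sigma_0=1$ by indecomposability of $\Lambda(q,-p)$ (as in the opening of Section~\ref{sec:dbounding}), so $\sigma_d=\sigma_0=1$ and $G=1$. By Proposition~\ref{prop:T0T1} some $\chi\in\T_1$ realizes $\braket{\chi}{\sigma}=1$. If $\chi_a=0$, then Lemma~\ref{lem:Gbound} would force $\sigma_f=1=\sigma_d$ with $d=0<f$, $d\in\{a,b,c,d\}$, and $f\notin\{a,b,c,d\}$, violating~\eqref{eq:abcdMAX}; hence $\chi_a\ne 0$. Lemma~\ref{lem:Gbound} combined with Corollary~\ref{cor:sigma} then gives
\[
1 \;=\; G \;\ge\; \sigma_a-\sigma_b \;=\; \sigma_c+\sigma_d+\theta \;=\; \sigma_c+1+\theta,
\]
and since $\sigma_c\ge 1$ and $\theta\in\{\pm 1\}$, this forces $\theta=-1$, $\sigma_c=1$, and $\sigma_a=\sigma_b+1$, settling two of the three claims.

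To force $c=1$ I would argue by contradiction, assuming $c\ge 2$. Since $\sigma$ is nondecreasing with $\sigma_0=\sigma_c=1$, we get $\sigma_1=1$ and hence $v_1=e_0-e_1$. Substituting $\theta=-1$ and $d=0$ into~\eqref{eq:w} yields $w=e_c+e_b-e_a$, which lies in $\sigma^\perp$ and is irreducible by Lemma~\ref{lem:Decomp}: every proper sub-sum of its signed support has nonzero $\sigma$--pairing (using $\sigma_c=1$, $\sigma_b\ge 1$, and $\sigma_a=\sigma_b+1$). By Lemma~\ref{lem:OddPairing} both $[v_1]$ and $[w]$ contain $x_0$, since one checks
\[
\braket{v_1}{e_a+e_b+e_c+e_0} \;=\; \braket{w}{e_a+e_b+e_c+e_0} \;=\; 1,
\]
using $1\notin\{a,b,c\}$ under $c\ge 2$.

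The hard part will be converting this geometric containment into an algebraic contradiction in $\Z^{n+2}$. Since $x_0$ is the leftmost vertex of the vertex basis, every interval containing it has the form $\{x_0,\dots,x_r\}$, so $[v_1]=\{x_0,\dots,x_s\}$ and $[w]=\{x_0,\dots,x_t\}$ for some $s,t\ge 0$. The identity $|v_1|=2$ forces $a_0=\cdots=a_s=2$, and $s=t$ is ruled out because $|v_1|=2\ne 3=|w|$. A short direct computation from~\eqref{eq:ai}, carried out separately in the cases $s<t$ and $s>t$, yields $\braket{[v_1]}{[w]}=1$, whence $\braket{v_1}{w}=\pm 1\ne 0$. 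On the other hand,
\[
\braket{v_1}{w}\;=\;\braket{e_0-e_1}{e_c+e_b-e_a}\;=\;0,
\]
since $\{0,1\}\cap\{a,b,c\}=\emptyset$. This contradiction forces $c=1$, completing the proof.
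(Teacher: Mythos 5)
Your argument for $\sigma_c=1$ and $\sigma_a=\sigma_b+1$ is essentially the paper's: from $G=1$, Lemma~\ref{lem:Gbound}, Corollary~\ref{cor:sigma}, and the normalization~\eqref{eq:abcdMAX} (to rule out the $\chi_a=0$ branch), one extracts $1\ge\sigma_c+1+\theta$, forcing $\theta=-1$, $\sigma_c=1$, $\sigma_a=\sigma_b+1$. Your deduction here is correct and matches the paper.

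For $c=1$, you take a genuinely different route. The paper simply re-applies~\eqref{eq:abcdMAX}: if $c>1$, then $\sigma_1=1=\sigma_d$ with $1\notin\{a,b,c,d\}$ and $d=0\in\{a,b,c,d\}$, which would require $d>1$, a contradiction. You instead introduce $w=e_c+e_b-e_a$ (the vector $w'$ of~\eqref{eq:w'}), verify it is irreducible in $\sigma^\perp$ via Lemma~\ref{lem:Decomp}, use Lemma~\ref{lem:OddPairing} to conclude both $[v_1]$ and $[w]$ contain the leftmost vertex $x_0$, and then derive a contradiction by computing $\braket{v_1}{w}$ two ways (nonzero because the intervals are nested with a common left end; zero by direct inspection since $\{0,1\}\cap\{a,b,c\}=\emptyset$). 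This is correct, and note that you could have shortcut the by-hand interval-pairing computation by citing Lemma~\ref{lem:PairingNonzero}, since intervals sharing a common end abut. The paper's one-line appeal to~\eqref{eq:abcdMAX} is more economical, but your geometric approach has the pedagogical advantage of exhibiting concretely why the lattice structure refuses to accommodate $c>1$, and it previews the $w'$-pairing technique that drives the rest of Section~\ref{sec:d=0}.
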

\begin{proof}
By Lemma~\ref{lem:Gbound}, we have
 \[
1=G\ge\min\{\sigma_f,\sigma_a-\sigma_b\}\ge\min\{\sigma_f,\sigma_c+\sigma_0-1\}=\min\{\sigma_f,\sigma_c\}.
\]
Using (\ref{eq:abcdMAX}),
we get $\sigma_c=1$, $c=1$, and $\sigma_a=\sigma_b+1$.
\end{proof}

For the rest of the section, we will replace $w$ in~\eqref{eq:w} with
\begin{equation}\label{eq:w'}
w'=-e_a+e_b+e_c.
\end{equation}
The following is an immediate corollary of Lemma~\ref{lem:d=0}.
\begin{cor}
The vector $w'$ is an irreducible, unbreakable vector in $\sigma^\perp$, and $x_0\in[w']$.
\end{cor}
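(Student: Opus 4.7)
The plan is to verify the four assertions—membership in $\sigma^\perp$, irreducibility, unbreakability, and $x_0 \in [w']$—in sequence, each reducing to a short calculation using Lemma~\ref{lem:d=0} together with the basic lemmas from Section~\ref{sec:latticeinput}. I expect none of the steps to present any real obstacle; the whole point of Lemma~\ref{lem:d=0} is to make this corollary routine.

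First I would check that $w' \in \sigma^\perp$: a direct computation gives $\langle w', \sigma\rangle = -\sigma_a + \sigma_b + \sigma_c$, which equals $0$ since Lemma~\ref{lem:d=0} supplies $\sigma_a - \sigma_b = 1$ and $\sigma_c = 1$. Next, for irreducibility, I would suppose $w' = x + y$ with $x, y \in \sigma^\perp$ nonzero and $\langle x, y\rangle \geq 0$. Since $w' = -e_a + e_b + e_c$ has each nonzero coordinate equal to $\pm 1$, Lemma~\ref{lem:Decomp} forces $x$ and $y$ to split the support $\{a,b,c\}$ of $w'$ into a disjoint union. The three (up to swap) nontrivial splittings are $\{-e_a\} \sqcup \{e_b + e_c\}$, $\{e_b\} \sqcup \{-e_a + e_c\}$, and $\{e_c\} \sqcup \{-e_a + e_b\}$. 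In each case one summand has the form $\pm e_i$ for some $i \in \{a, b, c\}$, and this is never in $\sigma^\perp$ because $\sigma_i > 0$. Hence no such decomposition exists, and $w'$ is irreducible.

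For unbreakability, observe that $|w'| = 3$, while any breakable decomposition $w' = x + y$ with $|x|, |y| \geq 3$ and $\langle x, y\rangle = -1$ would yield $|w'| = |x| + |y| - 2 \geq 4$, a contradiction. So $w'$ is unbreakable. Finally, since $w'$ is irreducible, Proposition~\ref{prop:IntervalsIrreducible} guarantees that $w' = \pm [I]$ for some interval $[I]$, and Lemma~\ref{lem:OddPairing} tells us that $x_0 \in [w']$ iff $\langle w', e_a + e_b + e_c + e_d\rangle$ is odd. A direct calculation gives
\[
\langle w', e_a + e_b + e_c + e_d\rangle = -1 + 1 + 1 + 0 = 1,
\]
which is odd (here $d = 0$ is disjoint from $\supp(w')$ since $0 \notin \{a, b, c\}$). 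Thus $x_0 \in [w']$, completing the proof.
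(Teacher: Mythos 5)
Your proof is correct and is essentially the argument the paper intends when it calls the corollary ``immediate'' from Lemma~\ref{lem:d=0}: you check membership in $\sigma^\perp$ by a one-line computation using $\sigma_a=\sigma_b+1$, $\sigma_c=1$; irreducibility via Lemma~\ref{lem:Decomp} by noting that every nontrivial split of $\{a,b,c\}$ produces a summand $\pm e_i\notin\sigma^\perp$; unbreakability from the norm identity $|x+y|=|x|+|y|+2\langle x,y\rangle$ forcing $|w'|\ge 4$; and $x_0\in[w']$ from Lemma~\ref{lem:OddPairing} since $\langle w', e_a+e_b+e_c+e_d\rangle=1$ (using $d=0\notin\{a,b,c\}$). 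All four steps are sound and rely only on data available at this point in the paper.
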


\begin{lemma}
$b = 2$, $\sigma_b = 1$, and $\sigma_a = 2$. Hence $(\sigma_0,\dots,\sigma_a) = (1,1,1,2^{[s]},2)$ for some $s \ge 0$.
\end{lemma}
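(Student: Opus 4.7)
The plan is to reduce the lemma to the single assertion that $\sigma_2 = 1$. Indeed, if $\sigma_2 = 1$, then \eqref{eq:abcdMAX}, applied to $\sigma_0 = \sigma_2 = 1$ with $0 = d \in \{a,b,c,d\}$, forces $2 \in \{a,b,c,d\}$; since $c = 1$ and $d = 0$ and $a > b \ge 2$, this gives $b = 2$, whence $\sigma_b = 1$ and Lemma~\ref{lem:d=0} gives $\sigma_a = 2$. The structural form $(1,1,1,2^{[s]},2)$ of $(\sigma_0, \ldots, \sigma_a)$ then follows from \eqref{eq:abcdMAX} applied to the indices $i$ with $b < i < a$. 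So I would assume for contradiction that $\sigma_2 \ge 2$; the changemaker condition $\sigma_2 \le 1 + \sigma_0 + \sigma_1 = 3$ leaves $\sigma_2 \in \{2, 3\}$.

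The main technical lever is the following. Since $w' = -e_a + e_b + e_c$ is irreducible with $|w'| = 3$ and $x_0 \in [w']$, and since $x_0$ is the leftmost vertex of $\Lambda(q,-p)$, any irreducible $v \in \sigma^\perp$ with $|v| = 3$ and $x_0 \in [v]$ must have $[v]$ sharing its left endpoint with $[w']$. Hence either $[v] = [w']$ (so $v = \pm w'$), or one interval is a proper subinterval of the other. In the latter case, a direct computation using $\braket{x_i}{x_{i+1}} = -1$ together with $|[v]| = |[w']| = 3$ yields $|\braket{v}{w'}| = 2$. Thus it suffices to exhibit, in each subcase, such a vector $v$ satisfying $\braket{v}{w'} = 0$.

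For $\sigma_2 = 2$ with $b = 2$, take $v = v_2 = e_0 + e_1 - e_2$ (the standard basis vector, since $\sigma_2 = \sigma_0 + \sigma_1$): then $|v_2| = 3$, $\braket{v_2}{e_a+e_b+e_c+e_d} = 1$ is odd (so $x_0 \in [v_2]$ by Lemma~\ref{lem:OddPairing}), and $\braket{v_2}{w'} = 0$; since $v_2 \ne \pm w'$, this contradicts the lever above. For $\sigma_2 = 2$, $b \ge 3$, and $\sigma_b = 2$, the vector $v = e_0 + e_1 - e_b$ lies in $\sigma^\perp$, is irreducible by Lemma~\ref{lem:Decomp} (no proper subset of its coefficients sums to zero against $\sigma$), and satisfies $|v| = 3$, $x_0 \in [v]$, and $\braket{v}{w'} = 0$, giving the analogous contradiction.

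The main obstacle is the remaining range: $\sigma_2 = 2$ with $\sigma_b \ge 3$, and $\sigma_2 = 3$. In the latter case, Lemma~\ref{tight1} (unique tight vector) together with the constraint $\sigma_a = \sigma_b + 1$ from Lemma~\ref{lem:d=0} and \eqref{eq:abcdMAX} forces $b = 2$, $a = 3$. In both cases, \eqref{eq:abcdMAX} further forces $\sigma_{b+1} = \cdots = \sigma_{a-1} = \sigma_a = \sigma_b + 1$, giving $\sigma$ a rigid form. Here the simple norm-$3$ trick no longer applies directly, so my plan is to combine this structural rigidity with the intersection-graph forbidden subgraphs (Lemmas~\ref{claw} and~\ref{heavytriple}) and Lemma~\ref{tight1}, applied to a carefully chosen subcollection of standard basis vectors such as $\{v_1, v_2, v_b, v_{b+1}, v_a\}$, to produce a claw or a heavy triple and thereby reach the contradiction.
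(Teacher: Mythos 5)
Your reduction to proving $\sigma_2 = 1$ is sound, and the ``lever'' (an irreducible $v\in\sigma^\perp$ with $x_0\in[v]$ and $|v|=3$ must share its left endpoint with $[w']$, forcing $\braket{v}{w'}\in\{\pm2,\pm3\}$) is a correct and clean observation. The two subcases you finish with it --- $\sigma_2=2$ with $b=2$, and $\sigma_2=2$, $b\ge3$, $\sigma_b=2$ --- are handled correctly. However, the argument as a whole has two genuine problems.

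First, the claim that ``$\sigma_2=3$ together with Lemma~\ref{tight1}, $\sigma_a=\sigma_b+1$, and \eqref{eq:abcdMAX} forces $b=2$, $a=3$'' is unjustified and in fact false as stated: the vector $\sigma=(1,1,3,3,4,\dots)$ has $v_2$ tight and no other tight standard basis vector, satisfies $\sigma_a=\sigma_b+1$ with $(a,b)=(4,3)$, and is consistent with \eqref{eq:abcdMAX}. Nothing in the three ingredients you cite produces a second tight vector, so Lemma~\ref{tight1} gives no contradiction; you must use the linearity of $\sigma^\perp$. The paper rules this out by observing that $x_0\in[v_2]$, so $[v_2]$ and $[w']$ share their left endpoint, and since $|v_2|=6>3=|w'|$ we get $[w']\prec[v_2]$ and hence $|\braket{v_2}{w'}|=|w'|-1=2$, contradicting the direct computation $\braket{v_2}{w'}=1$. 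Notice this is exactly your lever, applied with $|v|>3$ (your lever actually does not require $|v|=3$; it only needs $x_0\in[v]$ so that $[v]$ and $[w']$ share their left end, after which $|\braket{v}{w'}|\in\{|w'|-1,|w'|\}=\{2,3\}$ whenever $|v|\ne|w'|$ or $[v]=[w']$). Had you observed this, the $\sigma_2=3$, $b>2$ subcase would have fallen to your own tool.

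Second, the cases $\sigma_2=2$ with $\sigma_b\ge3$, and $\sigma_2=3$ with $b=2$, are explicitly left as a ``plan'' rather than a proof. For the first of these the paper's argument (with $v_2=e_0+e_1-e_2$, $x_0\notin[v_2]$) is uniform in $\sigma_b$: it shows $[v_2]\pitchfork[w']$ and then that $w'-v_2=-e_a+e_b+e_2-e_0$ would have to be reducible, contradicting Lemma~\ref{lem:Decomp} and $\sigma_a=\sigma_b+1$. For the second, the paper does not use claws or heavy triples at all; it invokes the strengthened changemaker condition, Proposition~\ref{prop:T0T1}, to produce a $\chi\in\mathcal T_1$ with $\braket{\chi}{\sigma}=2$, and then a parity count on $\sigma_a=4$, $\sigma_b=3$, $\sigma_c=\sigma_d=1$ shows no $\chi$ of the shape allowed by Proposition~\ref{prop:Sh1} can hit $2$. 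So your proposed route via Lemmas~\ref{claw} and~\ref{heavytriple} is not just unfinished but is aimed at the wrong tools for this subcase; the $D_4$-interval condition is essential here.
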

\begin{proof}
Suppose towards a contradiction that $b > 2$. Since $\sigma_0 = \sigma_1 = 1$ and $b > 2$, $\sigma_2\in\{2,3\}$. 

If $\sigma_2 = 2$, then $\braket{v_2}{v_1} = 0$, $\braket{v_2}{w'} = 1$ and $\braket{v_1}{w'} = -1$. Since $|v_1|=2$ and $x_0\notin[v_1]$, $[v_1]$ abuts the right end of $[w']$. If $[v_2]$ also abuts $[w']$, noting that $x_0\notin [v_2]$,
it abuts the right end of $[w']$, so $[v_2]$ abuts $[v_1]$, contradicting the fact that $\braket{v_2}{v_1} = 0$. Thus we must have $[v_2]\pitchfork[w']$, $|[v_2]\cap[w']|=3$, $v_2=\epsilon[v_2]$ and $w'=\epsilon[w']$ for some $\epsilon\in\{1,-1\}$. 
It follows that $w'-v_2$ is reducible. However, $w'-v_2=-e_a+e_b+e_2-e_0$ is irreducible by Lemma~\ref{lem:Decomp} and the fact that $\sigma_a=\sigma_b+1$, a contradiction.

If $\sigma_2 = 3$, then $[v_2]$ contains $x_0$, so $[w'] \prec [v_2]$. However, since $|w'| = 3$, this can happen only if $|\braket{v_2}{w'}| = 2$, contradicting the fact that $\braket{v_2}{w'} = 1$.

Having proved $b = 2$, we must have $\sigma_2\in\{1,2,3\}$. If $\sigma_2 = 2$, the interval $[v_2]$ contains $x_0$, so $[v_2]$ and $[w']$ share their left end, a contradiction to the direct computation
$\braket{v_2}{w'} = 0$. If $\sigma_2 = 3$,
using Proposition~\ref{prop:T0T1}, there must be some $\chi \in \mathcal{T}_1$ with $\braket{\chi}{\sigma} = 2$. Moreover, since $\{0,1,2\} = \{d,c,b\}$, $\sigma_f>\sigma_2 = 3$. Therefore, $\chi_a \not = 0$ by Proposition~\ref{prop:Sh1}. Using Lemma~\ref{lem:d=0}, $\sigma_a = 4$. It must be the case that for some $i\in \{b,c,d\}$ $\chi_i=-1$ and $\chi_j=0$ for $j\not = i, a$. Then $\braket{\chi}{\sigma}$ is either $1$ or $3$, a contradiction.

Therefore, $b=2$, $\sigma_2 = 1$, and $\sigma_a = \sigma_b + 1 = 2$.
\end{proof}

\begin{lemma}\label{lem:d0class}
$\sigma_i = 2s+3$ for $i > a$. That is, $\sigma = (1,1,1,2^{[s]},2,2s+3^{[t]})$ with $s,t\ge 0$.
\end{lemma}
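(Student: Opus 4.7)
The plan is to prove $\sigma_i=2s+3$ for all $i>a$ by induction on $i-a$. The substantive step is the base case $i=a+1$; the inductive step follows the same template applied to $v_i$ once the values $\sigma_{a+1},\ldots,\sigma_{i-1}$ are known. For the base case I will first establish the narrow window $3\le\sigma_{a+1}\le 2s+5$ and then pin down $\sigma_{a+1}=2s+3$ via an intersection-graph analysis.

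For the lower bound, monotonicity gives $\sigma_{a+1}\ge\sigma_a=2$, and condition~\eqref{eq:abcdMAX} forbids $\sigma_{a+1}=\sigma_a$ (since $a\in\{a,b,c,d\}$ but $a+1\notin\{a,b,c,d\}$), hence $\sigma_{a+1}\ge 3$. For the upper bound, Corollary~\ref{cor:sigma} yields $\theta=-1$ in our setup, so Proposition~\ref{prop:T0T1} requires every integer from $0$ to $\sum_i\sigma_i-1$ to be attained by some $\chi\in\T_1$. Enumerating the values of $\braket{\chi}{\sigma}$ for $\chi\in\T_1$ with $\chi_i=0$ for $i>a$, using Proposition~\ref{prop:Sh1} with $d=0,c=1,b=2$, I will compute that the achievable set from positions $\le a$ is the full interval $\{0,1,\ldots,2s+4\}$ (for $s\ge 1$; the $s=0$ case, in which the restricted set becomes $\{0,1,3,4\}$, requires a minor adjustment of the same argument). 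Since each index $>a$ contributes at least $\sigma_{a+1}\ge 3$, the requirement that $2s+5$ be attainable then forces $\sigma_{a+1}\le 2s+5$.

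To pin down $\sigma_{a+1}$ exactly, I will write $v_{a+1}$ explicitly for each candidate value in $\{3,\ldots,2s+5\}$ via the greedy convention and test the intersection graph $G(T)$ against the structural constraints of Section~\ref{sec:latticeinput}. The cleanest case is $\sigma_{a+1}=2s+5$: here $v_{a+1}=e_0+e_1+\cdots+e_a-e_{a+1}$, and using the explicit forms $v_1=e_0-e_1$, $v_2=e_1-e_2$, $v_3=e_1+e_2-e_3$, and $v_j=e_{j-1}-e_j$ for $4\le j\le a$ (valid for $s\ge 1$), a direct pairing computation shows $v_{a+1}$ is adjacent only to $v_3$, producing the claw $(v_3;v_1,v_4,v_{a+1})$ in contradiction with Lemma~\ref{claw}. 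The intermediate values $\sigma_{a+1}\in\{3,4,\ldots,2s+4\}\setminus\{2s+3\}$ will be ruled out by analogous case analyses invoking Lemmas~\ref{claw}, \ref{heavytriple}, \ref{lem:Pairing1}, and \ref{lem:DistinctHighNorm}, together with the pairings of $v_{a+1}$ against the auxiliary irreducible vector $w'=-e_a+e_b+e_c$ from Lemma~\ref{lemma:w}. The inductive step for $\sigma_i$ with $i>a+1$ follows the same template, with $v_i$ tested in the augmented graph containing $v_{a+1},\ldots,v_{i-1}$ of known form. The principal obstacle throughout is the case-by-case verification for the intermediate candidate values, each of which produces a distinct $v_{a+1}$ whose greedy form and adjacency pattern must be checked individually against the linear-lattice constraints.
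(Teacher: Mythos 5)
Your overall route differs substantively from the paper's, and it has two genuine gaps.

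The paper does not bound $\sigma_{a+1}$ numerically at all; instead it analyzes $m := \min\supp(v_{a+1})$ directly. From $\sigma_{a+1} > 2$ it deduces $m < a$, shows a claw forces $m < 3$, then eliminates $m = 0$ and $m = 1$ via the lattice constraints (after first proving an auxiliary claim that no $v_j$ is tight), leaving only $m = 2$ with $v_{a+1}$ just right, which automatically yields $\sigma_{a+1} = 2s+3$. This produces a \emph{finite, $s$-independent} list of cases. Your plan instead bounds $\sigma_{a+1} \le 2s+5$ using Proposition~\ref{prop:T0T1}, and then proposes to rule out each value in $\{3, \ldots, 2s+5\}\setminus\{2s+3\}$. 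But the length of that list grows with $s$, so ``analogous case analyses'' is hiding an unbounded amount of work; you would need to organize these by the structure of the greedy $v_{a+1}$ --- which is essentially reinventing the paper's split by $\min\supp(v_{a+1})$ --- before the argument becomes finite. For instance, the even values $\sigma_{a+1} = 2k+2$ with $2 \le k \le s$ produce $v_{a+1}$ supported in $\{a-k,\ldots,a,a+1\}$, triggering a claw at $v_{a-k}$, while the odd values below $2s+3$ give a gappy $v_{a+1}$ with $\min\supp = 2$, requiring the $k > 3$ sub-case analysis; neither of these is parallel to your $\sigma_{a+1} = 2s+5$ computation. You should also prove, as the paper does, that no $v_j$ is tight; otherwise the ``greedy convention'' does not pin down $v_{a+1}$ in the way you assume.

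The second gap is the induction step for $i > a+1$. In the paper this is not a repeat of the $v_{a+1}$ argument: instead one assumes $|v_j| > 2$ for the smallest such $j > a+1$, observes $v_j$ is unbreakable, and derives a contradiction by analyzing $\ell = \min\supp(v_j)$ --- the cases $\ell \in \{a\}$, $3 \le \ell < a$ or $\ell \ge a+1$, and $\ell < 3$ are handled by distinct arguments (nested intervals, claws, and a heavy triple $(v_3, v_{a+1}, v_j)$ respectively). A ``same template'' induction would not automatically produce the heavy-triple argument, and that case is essential for large $j$. You should flesh this out as a separate argument rather than gesturing at induction.
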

\begin{proof}
First, consider $v_{a+1}$. Since $\sigma_{a+1} > 2$, $m:=\min \supp(v_{a+1}) < a$, so if $m\ge 3$ then $s:=a-3>0$ and there would be a claw centered at $v_m$, a contradiction to Lemma~\ref{claw}. Therefore, $\supp(v_{a+1}) \cap \{0,1,2\}$ is nonempty, thus is one of $\{0,1,2\}$, $\{1,2\}$, or $\{2\}$ by Lemma~\ref{gappy3}. 

We note that $x_0\in[v_a]$ no matter $s=0$ or $s>0$.

We claim that there is no index $j$ such that $v_j$
 is tight. Otherwise, we have $j>a$ and $[v_j]$ contains $x_0$, so $[v_a] \prec [v_j]$.
If $s > 0$, $\braket{v_a}{v_j} = 0$, a contradiction to $[v_a] \prec [v_j]$. If $s = 0$, then $|v_a|=3$ hence $|\braket{v_a}{v_{j}}| = 2$, contradicting the direct computation  $\braket{v_a}{v_{j}} = 1$.

If $m=0$, then $3 \in \supp(v_{a+1})$ since otherwise $\braket{v_3}{v_{a+1}} = 2$, a contradiction to Lemma~\ref{lem:Pairing1}. Then since $|v_i| = 2$ for $3 < i \le a$, $v_{a+1}$ is just right by the claim in the last paragraph. However, if $s > 0$, then $(v_3;v_4,v_1,v_{a+1})$ will give a claw, a contradiction~(Lemma~\ref{claw}). If $s = 0$ then $[v_3]$ contains $x_0$ so $[v_1]$ and $[v_{a+1}]$ must both abut the right end of $[v_3]$, contradicting the fact that they are orthogonal.

If $m=1$, then again we must have $3 \in \supp(v_{a+1})$ and $v_{a+1}$ just right. Since $|\{a,b,c,d\}\cap\supp(v_{a+1})|=3$, $x_0\in[v_{a+1}]$, so $[v_a] \prec [v_{a+1}]$ and $|\braket{v_{a+1}}{v_a}| = |v_a|-1$. This contradicts the direct computation of $\braket{v_a}{v_{a+1}}$ no matter $s=0$ or $s>0$.

If $m=2$, then $v_{a+1} = e_2 + e_k + \cdots + e_a - e_{a+1}$ for some $3 \le k \le a$. If $3 < k < a$, there is a claw $(v_k; v_{k-1},v_{k+1}, v_{a+1})$~(Lemma~\ref{claw}). If $k = a$ and $a > 3$, then $x_0\in[v_a]$ but $x_0\notin[v_{a+1}]$, and so $[v_a]\dagger[v_{a+1}]$ since $|v_a|=2<|v_{a+1}|$. If $s=1$, then since $x_0\not \in [v_3]$, $\braket{v_{3}}{v_a} =-1$, $[v_3]$ and $[v_{a+1}]$ will share a hight weight vertex, which is not possible. If $s>1$, then both $[v_{a+1}]$ and $[v_{a-1}]$ abut the right endpoint of $[v_a]$, hence $\braket{v_{a+1}}{v_{a-1}}=\pm1$, a contradiction to the direct computation $\braket{v_{a+1}}{v_{a-1}}=0$. Therefore, $k = 3$, so $v_{a+1}$ is just right and $\sigma_{a+1} = 2s+3$.

Finally, suppose that for some $j > a + 1$, $|v_j| > 2$. Take $j$ to be the smallest such index. Then $v_j$ is unbreakable by our earlier claim. 
Let $\ell=\min \supp(v_j)$. If either $\ell \ge a+1$ or $3\le\ell<a$, there will be a claw centered at $v_{\ell}$, contradicting~Lemma~\ref{claw}. If $\ell=a$, then $[v_j]$ contains $x_0$, so $[v_a] \prec [v_j]$.
If $s=0$, $|v_a|=3$, thus $[v_j]$ contains the high weight vertex of $[v_a]$, a contradiction.
If $s>0$, $[v_3]$ is connected to $[v_a]$ via a (possibly empty) sequence  of norm $2$ vectors, so the intervals $[v_3]$ and $[v_j]$ will share a high weight vertex, a contradiction. If $\ell < 3$, then there is a heavy triple $(v_3, v_{a+1}, v_j)$, contradicting Lemma~\ref{heavytriple}. 
\end{proof}

\section{Proof of Theorem~\ref{thm:Realization}}\label{sec:pandq}
Lemma~\ref{lem:d0class} specifies a changemaker vector in $\Z^{n+2}$ whose orthogonal complement is the linear changemaker lattice $\Lambda(q,-p)$. From the integers $a_0, a_1, \cdots a_n$ in~\eqref{eq:ai}, we can recover $p$ and $q$ using~\eqref{eq:ContFrac}. Since $q<p<2q$, we have
\[\frac{p}{q}=[2,a_0,a_1,\dots,a_n]^-.\]
We use the following facts:	
\begin{lemma}\cite[Lemma~9.5~(2)~and~(3)]{greene:LSRP}\label{greene9.5}
For integers $s,t,b$ with $b\ge 2$ and $s,t\ge 0$,
\begin{itemize}
    \item[1.] $[\cdots, b, 2^{[t-1]}]^- = [\cdots, b-1, -t]^-$.
    \item[2.] If $\displaystyle [2^{[s+1]},b,\cdots]^- = \frac{p}{q}$, then $\displaystyle[-(s+2),b-1, \cdots]^-=\frac{p}{q-p}$.
\end{itemize}
\end{lemma}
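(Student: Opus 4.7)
The plan is to treat both identities as purely formal manipulations of Hirzebruch--Jung continued fractions, reducing each to a short inductive computation of the relevant tail and then an algebraic verification. In both cases the prefix $\cdots$ plays no role: since $[a_{-1},a_0,\ldots,a_n]^- = a_{-1}-1/[a_0,\ldots,a_n]^-$, it suffices to prove that the two tails (everything to the right of $\cdots$, together with the last element of $\cdots$ when it is modified) evaluate to the same rational number.

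For part 1, I will first prove by induction on $t$ that
\begin{equation*}
[2^{[t-1]}]^- = \tfrac{t}{t-1} \quad (t \ge 2),
\end{equation*}
with the $t=1$ case handled separately (where $2^{[t-1]}$ is empty and the identity reduces to $b = (b-1) - 1/(-1)$). The inductive step is $[2,2^{[k-1]}]^- = 2 - (k-1)/k = (k+1)/k$. Then I compute the last two terms on each side: on the left one gets $b - (t-1)/t = ((b-1)t+1)/t$, while on the right one gets $(b-1) - 1/(-t) = ((b-1)t+1)/t$. Since these agree, the full identity follows from the recursive definition of $[\cdot]^-$ applied to any common prefix.

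For part 2, write $x = [b,\ldots]^-$, so that $[b-1,\ldots]^- = x - 1$ (the two differ only in the leading entry). I will prove by induction on $k$ that
\begin{equation*}
[2^{[k]}, x]^- = \frac{(k+1)x - k}{kx - (k-1)},
\end{equation*}
using the recursion $[2,2^{[k-1]},x]^- = 2 - 1/[2^{[k-1]},x]^-$. Setting $k = s+1$ yields $p = (s+2)x - (s+1)$ and $q = (s+1)x - s$ (up to a common factor, which is harmless since both sides of the claim are fractions). A direct calculation then gives $q - p = -(x-1)$, so
\begin{equation*}
\frac{p}{q-p} \;=\; \frac{(s+2)x - (s+1)}{-(x-1)} \;=\; -(s+2) - \frac{1}{x-1} \;=\; -(s+2) - \frac{1}{\,[b-1,\ldots]^-\,},
\end{equation*}
which is exactly $[-(s+2),b-1,\ldots]^-$.

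There is no serious obstacle; the only thing to watch is the sign bookkeeping in part 2 (in particular the fact that $q-p$ comes out negative as $-(x-1)$, which is what allows the leading entry $-(s+2)$ to appear in the new continued fraction). Both identities are formal, valid for any specialization of the prefix $\cdots$, and the inductive closed forms for $[2^{[t-1]}]^-$ and $[2^{[k]},x]^-$ do all the work.
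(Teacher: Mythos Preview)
Your argument is correct. Both parts reduce, as you say, to checking that the tails agree as rational numbers, and your inductive closed forms $[2^{[t-1]}]^- = t/(t-1)$ and $[2^{[k]},x]^- = \bigl((k+1)x-k\bigr)/\bigl(kx-(k-1)\bigr)$ are easily verified; the remaining algebra in each part is routine and your sign bookkeeping in part~2 is right. One small remark: the statement allows $t=0$, where the notation $2^{[-1]}$ is degenerate; your proof covers $t\ge 1$, which is all that is actually used downstream, and the $t=0$ case can be handled by the usual convention $[\cdots,b-1,0]^- = [\cdots]^-$ if one insists.

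As for comparison: the paper gives no proof of this lemma at all---it is quoted verbatim from Greene's paper \cite[Lemma~9.5]{greene:LSRP} and used as a black box in Section~\ref{sec:pandq}. So there is nothing in the present paper to compare your argument against; your direct computation simply supplies what the authors chose to cite.
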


We have
\[
\sigma = (1,1,1,2^{[s]},2,2s+3^{[t]}),
\]
with $s,t\ge0$. One can check that the standard basis of the linear changemaker lattice
\[
S=\{v_{s+3},\cdots,v_3,v_1,v_2,v_{s+4},\cdots,v_{s+t+3}\}
\]
coincides with its vertex basis with norms given by
\[
\{2^{[s]},3,2,2,s+3,2^{[t-1]}\}.
\]
By Lemma~\ref{lem:OddPairing}, $[v_{s+3}]$ contains $x_0$, so $v_{s+3}=x_0$.
Hence we have
\[\frac{p}{q}=[2^{[s+1]},3,2,2,s+3,2^{[t-1]}]^-.\]
Using Lemma~\ref{greene9.5}, we see that
\begin{align*}
&q=7+4s+9t+12st+4s^2t,\text{ and} \\
&p=11+4s+14t+16st+4s^2t.
\end{align*}
It is straightforward to check that
\[
\displaystyle q=\frac{1}{r^2-2r-1}(r^2p-1),
\]
with $r=-2s-3$ and $p\equiv -2r+5 \pmod{r^2-2r-1}$.

\begin{proof}[Proof of Theorem~\ref{thm:Realization}]
Suppose $P(p,q)\cong S^3_{4q}(K)$, the above computation shows that $(p,q)$ must be as in the statement. On the other hand, if $(p,q)$ is as in the statement, it follows from~\cite[Table~2]{Prism2016} that there exists a Berge--Kang knot $K_0$ such that $P(p,q)\cong S^3_{4q}(K_0)$. For the second statement, we note that $K$ and $K_0$ correspond to the same changemaker vector. Using Proposition~\ref{prop:AlexanderComputation}, we know that $\Delta_K=\Delta_{K_0}$, so $\widehat{HFK}(K)\cong \widehat{HFK}(K_0)$ by \cite[Theorem~1.2]{OSzLens}.
\end{proof}
\bibliographystyle{amsalpha2}
\bibliography{Reference}

\end{document}